\newtheorem{remark}[theorem]{Remark}
\title{Well-Posedness of the Limiting Equation of a Noisy Consensus Model in Opinion Dynamics}
\author{ 
Bernard Chazelle\thanks{Department of Computer Science, Princeton University, Princeton, NJ 08540. (\email{chazelle@cs.princeton.edu}). This author was supported in part by NSF grants CCF-0963825, CCF-1016250 and CCF-1420112}  \and
Quansen Jiu\thanks{School of Mathematical Sciences, Capital Normal University, Beijing 100048, P. R. China. (\email{jiuqs@mail.cnu.edu.cn}). This author is partially supported by National Natural Sciences Foundation of China (No. 11171229, 11231006) and Project of Beijing Chang Cheng Xue Zhe.}  \and 
Qianxiao Li\thanks{The Program in Applied and Computational Mathematics, Princeton University, Princeton, NJ 08540. (\email{qianxiao@math.princeton.edu}). This author is supported by the Agency for Science, Technology and Research, Singapore.}  \and 
Chu Wang\thanks{The Program in Applied and Computational Mathematics, Princeton University, Princeton, NJ 08540. (\email{chuw@math.princeton.edu}).}  
}
\begin{document}
\maketitle
\slugger{sima}{xxxx}{xx}{x}{x--x}

\begin{abstract}
This paper establishes the global well-posedness of the nonlinear Fokker-Planck equation for a noisy version of the Hegselmann-Krause model. The equation captures the mean-field behavior of a classic multiagent system for opinion dynamics.
We prove the global existence, uniqueness, nonnegativity and regularity of the weak solution. We also exhibit a global stability condition, which delineates a forbidden region for consensus formation. This is the first nonlinear stability result derived for the Hegselmann-Krause model.
\end{abstract}

\begin{keywords}Hegselmann-Krause model, nonlinear Fokker-Planck equation, well-posedness, global stability\end{keywords}

\begin{AMS}35Q70, 35Q91\end{AMS}


\section{Introduction}

Network-based dynamical systems feature agents that
communicate via a dynamic graph
while acting on the information they receive.  These systems
have received increasing attention lately because of their versatile use in modeling 
social and biological systems~\cite{axelrod1997complexity,blondel2009krause,
	castellano2009statistical,chazelle2012dynamics,chazelle2015Algo,easley2012networks,jadbabaieLM03}.
Typically, they consist of a fixed number $N$ of agents, 
each one located at $x_k(t)$ on the real line.  The agents' positions evolve 
as interactions take place along the edges of a dynamical graph 
that evolves endogenously.
The motivation behind the model is to get a better understanding of the
dynamics of collective behavior.
Following~\cite{DBLP:journals/corr/GarnierPY15,motsch2014heterophilious}, we 
express the system as a set of $N$ coupled stochastic differential equations:

\begin{equation}\label{SDE}
\mathrm{d}x_i=-\frac{1}{N}\sum_{j=1}^Na_{ij}(x_i-x_j)\mathrm{d}t+\sigma\mathrm{d}W^{(i)}_t,
\end{equation}
where $\sigma$ is the magnitude of the noise, $W_t^{(i)}$ are independent Wiener processes,
and the ``influence" parameter $a_{ij}$ is a function of the distance between
agents $i$ and $j$; in other words,  $a_{ij}=a(|x_i-x_j|)$,
where $a$ is nonnegative (to create attractive forces) and compactly supported 
over a fixed interval (to keep the range of the forces finite).  
Intuitively, the model mediates the competing tension between two
opposing forces: the sum in~(\ref{SDE}) pulls the agents toward one another while
the diffusion term keeps them jiggling in a Brownian motion; the two terms push
the system into ordered and disordered states respectively.
In the mean field limit, $N\rightarrow +\infty$, Equation~(\ref{SDE}) induces a nonlinear Fokker-Planck equation
for the agent density profile $\rho(x,t)$~\cite{DBLP:journals/corr/GarnierPY15}:
\begin{equation}\label{PDE}
\rho_{t}(x,t)=\bigg{(}\rho(x,t)\int\rho(x-y,t)y a(|y|)\mathrm{d}y\bigg{)}_{x}+\frac{\sigma^2}{2} \rho_{xx}(x,t).
\end{equation}
The function $\rho(x,t)$ is the limit density of
$\rho^N(x,t) := \frac{1}{N}\sum \delta_{x_j(t)}(dx)$,
as $N$ goes to infinity, where $\delta_x(dx)$ denotes
the Dirac measure with point mass at $x$.

In the classic Hegselmann-Krause ({\em HK}\,) model, one of the most popular systems in 
consensus dynamics~\cite{fortunato2005consensus,hegselmann2002opinion,krause2000discrete},
each one of the $N$ agents moves, at each time step,
to the mass center of all the others within a fixed distance. The position of
an agent represents its ``opinion''. If we add noise to this process,
we obtain the discrete-time version of~(\ref{SDE}) for 
$a(y)=\mathbf{1}_{\left[0,R\right]}\left(y\right)$.
To be exact, the original {\em HK} model
does not scale $a_{ij}$ by $1/N$ but by the reciprocal of the number
agents within distance $R$ of agent~$i$. Canuto et al.~\cite{canutoFT2012} have argued 
that this difference has a minor impact on the dynamics.
By preserving the pairwise symmetry among the agents, however,
the formulation~(\ref{SDE}) simplifies the analysis.

The {\em HK} model has been the subject of extensive investigation.
A sample of the literature includes work on convergence and consensus 
properties~\cite{axelrod1997complexity,bhattacharyya2013convergence,
	fortunato2005consensus,hendrickx2006convergence,lorenz2005stabilization,
	martinez2007synchronous,moreau2005stability,touri2011discrete},
conjectures about the spatial features of the attractor set~\cite{blondel20072r}, and
various extensions such as {\em HK} systems with 
inertia~\cite{DBLP:journals/corr/ChazelleW15},
leaders~\cite{hegselmann2015opinion,ramirez2006follow,wongkaew2015control},
or random jumps~\cite{pineda2013noisy}.
This paper focuses on the analysis of the nonlinear Fokker-Planck equation for the noisy {\em HK} system,
which corresponds to setting $a(y)=\mathbf{1}_{\left[0,R\right]}\left(y\right)$
in~(\ref{PDE}). 

For concreteness, let us denote $U=\left[-\ell,\ell\right]$, $U_{T}=\left[-\ell,\ell\right]\times\left(0,T\right)$
for $T>0$, and consider the following periodic problem for the {\em HK} system:  
\begin{align}
	& \begin{cases}
		\begin{aligned}\rho_{t}-\frac{\sigma^{2}}{2}\rho_{xx} & =\left(\rho G_{\rho}\right)_{x} &  & \text{in}\ U_{T}\\
			\rho\left(\ell,\cdot\right) & =\rho\left(-\ell,\cdot\right) &  & \text{on}\ \partial U\times\left[0,T\right]\\
			\rho & =\rho_{0} &  & \text{on}\ U\times\left\{ t=0\right\} 
		\end{aligned}
	\end{cases}\label{eq:evo_equation}
\end{align}
where 
\begin{equation}
G_{\rho}\left(x,t\right):=\int_{x-R}^{x+R} \left(x-y\right)\rho\left(y,t\right)dy
\label{eq:G_definition}
\end{equation}
and the initial condition $\rho_{0}$ is assumed to be a probability density, i.e.,
$\rho_{0}\geq 0$ and $\int_{U}\rho_{0}\left(x\right)dx=1$. 
The positive constants $\ell$, $R$ and $\sigma$ are fixed with $0<R<\ell$. Note that we have to periodically extend $\rho$ outside of $U$ in order to make sense of the integral above.  The periodicity of $\rho$, together with Eq.~(\ref{eq:evo_equation}), immediately implies the normalization condition
\begin{equation}
\int_{U}\rho\left(y,t\right)dy=1\label{eq:normalization_cond}
\end{equation}
for all $0\leq t<T$.

\paragraph{Main Results.}

We establish the global well-posedness of Eq.~(\ref{eq:evo_equation}), which entails the existence, uniqueness, nonnegativity and regularity of the solution. In addition, we prove a global stability condition for the uniform solution $\rho=\frac{1}{2\ell}$, representing the state without any clustering of opinions. This gives a sufficient condition involving $R$ and $\sigma$ for which no consensus can be reached regardless of the initial condition.

The paper is organized as follows. In Section~\ref{sec:apriori}, we first derive the aforementioned global stability condition by assuming that a sufficiently smooth solution exists. More precisely, we show that as long as $\sigma^2>\frac{2\ell}{\pi} \left(2R+{R^2}/{\sqrt{3}\, \ell}\right)$, the uniform solution $\rho=\frac{1}{2\ell}$ is unconditionally stable in the sense that $\rho\left(t\right)\rightarrow\frac{1}{2\ell}$ exponentially as $t\rightarrow\infty$ for any initial data $\rho_0\in {L}_{per}^{2}\left(U\right)$. An important ingredient in the proof is a ${L}_{per}^1$ estimate on the solution (Lemma \ref{lem:L1}). Interestingly, this estimate immediately implies the nonnegativity of the solution while no arguments using maximum principles are required. The latter may not be easy to obtain for nonlinear partial integro-differential equations, such as the one we consider here. We close the section by discussing the physical significance of the stability result and how it relates to other works in the opinion dynamics literature. 

In Section \ref{sec:existenceuniqueness}, we prove the global existence and uniqueness of the weak solution to (\ref{eq:evo_equation}) when the initial data $\rho_0\in {L}_{per}^{2}\left(U\right)$. Here, we construct approximate solutions by solving a series of linear parabolic equations obtained from (\ref{eq:evo_equation}) by replacing $\rho G_{\rho}$ with $\rho_n G_{\rho_{n-1}}$. Using energy estimates, we find that the sequence of solutions $\rho_n$ forms a Cauchy sequence in ${L}^1(0,T;{L}_{per}^{1}\left(U\right))$ and we use this strong convergence result to simplify the existence proof.

Finally, in Section \ref{sec:regularity} we establish improved regularity properties of the weak solution if $\rho_0\in H_{per}^{k}\left(U\right)$ for some $k\ge 1$. This allows us to remove the a priori smoothness assumptions in the stability and positivity results of Section \ref{sec:apriori}. The main results in this paper are then summarized in Theorem \ref{thm:mainresult}.

\paragraph{Notation.}

As customary in the literature, we often treat $\rho$ (and other functions on $U_{T}$) not as a function from $U_{T}$ to $\mathbb{R}$, but
from $\left[0,T\right]$ to a relevant Banach space. In this case, we define for each $t$, 
\begin{equation}
\left[\rho\left(t\right)\right]\left(\cdot\right):=\rho\left(\cdot,t\right).
\end{equation}
For a shorthand, we denote the usual $L^p$ norms on $U$ by
\begin{equation}
\left\Vert \rho\left(t\right)\right\Vert _{p} := \left\Vert \rho\left(t\right)\right\Vert _{L^p\left(U\right)} 
\end{equation}
Other Banach space norms will be written out explicitly. 
Since we are dealing with periodic boundary conditions, a subscript ``per'' will often be attached to the 
relevant Banach space symbols to denote the subspace of periodic functions on $U$, e.g., 
\begin{eqnarray}
{L}_{per}^{p}\left(U\right) & := & \left\{ u\in{L}^{p}\left(U\right):u\left(-\ell\right)=u\left(+\ell\right)\right\} ,\nonumber \\
H_{per}^{k}\left(U\right) & := & \left\{ u\in H^{k}\left(U\right):u\left(-\ell\right)=u\left(+\ell\right)\right\} ,
\end{eqnarray}
for $1\leq p\leq\infty$ and $k\geq1$. They are equipped with the usual norms. 

Finally, we denote by $C,C_{1},C_{2},\dots$ any generic constants (possibly depending on $R$, $\ell$ or $\sigma$) used in the various energy estimates. Their actual values may change from line to line. When they depend on the time interval under consideration, we will indicate the dependence explicitly, e.g., $C\left(T\right)$.

\section{Nonnegativity and Global Stability via A Priori Estimates\label{sec:apriori}}

First, let us assume that there exists a sufficiently smooth solution
\begin{equation}
\rho\in C^{1}\left(0,\infty;C_{per}^{2}\left(U\right)\right),
\end{equation}
to equation (\ref{eq:evo_equation}). This allows us to prove a priori energy estimates, from which we can deduce the solution's nonnegativity and derive a global
stability condition of the spatially uniform solution $\rho=\frac{1}{2\ell}$. 

We begin by setting $\psi=\rho-\frac{1}{2\ell}$ so that $\int_{U}\psi\left(y,t\right)dy=0$.
Then, $\psi$ satisfies the equation
\begin{equation}
\psi_{t}-\frac{\sigma^{2}}{2}\psi_{xx} = \left(\psi G_{\psi}\right)_{x}+\frac{1}{2\ell}\left(G_{\psi}\right)_{x}.\label{eq:psi}
\end{equation}
The other two extra terms are zero since $\int_{x-R}^{x+R}\left(x-y\right)dy=0$
for all $x$. Multiplying equation~(\ref{eq:psi}) by $\psi$ and
integrating by parts over $U$, we have
\begin{equation}
\frac{1}{2}\frac{d}{dt}\left\Vert \psi\left(t\right)\right\Vert _{2}^{2}+\frac{\sigma^{2}}{2}\left\Vert \psi_{x}\left(t\right)\right\Vert _{2}^{2}\leq\left|\int_{U}\psi_{x}\psi G_{\psi}dx\right|+\frac{1}{2\ell}\left|\int_{U}\psi_x G_{\psi}dx\right|.
\end{equation}
By the Cauchy-Schwarz inequality,
\begin{eqnarray}
\frac{1}{2}\frac{d}{dt}\left\Vert \psi\left(t\right)\right\Vert _{2}^{2}+\frac{\sigma^{2}}{2}\left\Vert \psi_{x}\left(t\right)\right\Vert _{2}^{2}
& \leq &\left\Vert G_{\psi}\left(t\right)\right\Vert _{\infty}\left\Vert \psi\left(t\right)\right\Vert _{2}\left\Vert \psi_{x}\left(t\right)\right\Vert _{2}
\nonumber \\ 
& & + \frac{1}{2\ell}\left\Vert \psi_x\left(t\right)\right\Vert _{2}\left\Vert G_{\psi}\left(t\right)\right\Vert _{2}.
\label{eq:energy_est1}
\end{eqnarray}
First let us estimate $\left\Vert G_{\psi}\left(t\right)\right\Vert _{\infty}$.
For any $x$ and $t$, we have 
\begin{eqnarray}
\left|G_{\psi}\left(x,t\right)\right| & \le & \int_{U}\left|y-x\right|\mathbf{1}_{\left\{ \left|y-x\right|\leq R\right\} }\left|\psi\left(y,t\right)\right|dy\nonumber \\
& \leq & R\left\Vert \psi\left(t\right)\right\Vert _{1}\nonumber \\
& \leq & R\left(1+\left\Vert \rho\left(t\right)\right\Vert _{1}\right).\label{eq:G_infty_est}
\end{eqnarray}
In order to proceed with the bound on $\left\Vert G_{\psi}\left(t\right)\right\Vert _{\infty}$,
we need an ${L}^{1}_{per}$ estimate of $\rho\left(t\right)$.
\begin{lemma}\label{lem:L1}
Suppose $\rho\in C^{1}\left(0,\infty;C_{per}^{2}\left(U\right)\right)$
is a solution of (\ref{eq:evo_equation}) with $\rho_{0}\in{L}_{per}^{1}\left(U\right)$,
then $\left\Vert \rho\left(t\right)\right\Vert _{1}\leq\left\Vert \rho_{0}\right\Vert _{1}$ for all $t\geq0$. 
\end{lemma}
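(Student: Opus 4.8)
The plan is to show that $t\mapsto\|\rho(t)\|_1$ is non-increasing by differentiating in time a smooth convex surrogate for $\int_U|\rho|\,dx$, while exploiting the divergence (conservation-law) form of the right-hand side of~(\ref{eq:evo_equation}). Since $s\mapsto|s|$ is not differentiable at the origin, I would first fix a smooth convex approximation $\beta_\epsilon$ of the absolute value, for instance $\beta_\epsilon(s)=\sqrt{s^2+\epsilon^2}-\epsilon$, which satisfies $0\le\beta_\epsilon(s)\le|s|$, $|\beta_\epsilon'|\le1$, $\beta_\epsilon''\ge0$, and $\beta_\epsilon(s)\to|s|$ pointwise as $\epsilon\to0$. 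The object to track is $\int_U\beta_\epsilon(\rho(x,t))\,dx$, which by the assumed regularity $\rho\in C^1(0,\infty;C^2_{per}(U))$ is differentiable in $t$ with $\frac{d}{dt}\int_U\beta_\epsilon(\rho)\,dx=\int_U\beta_\epsilon'(\rho)\rho_t\,dx$.

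Substituting $\rho_t=\frac{\sigma^2}{2}\rho_{xx}+(\rho G_\rho)_x$ splits the right-hand side into a diffusion term and a convection term. For the diffusion term, integrating by parts under the periodic boundary conditions gives $\frac{\sigma^2}{2}\int_U\beta_\epsilon'(\rho)\rho_{xx}\,dx=-\frac{\sigma^2}{2}\int_U\beta_\epsilon''(\rho)\rho_x^2\,dx\le0$, the nonpositivity being exactly the convexity $\beta_\epsilon''\ge0$; this term is simply discarded. The convection term is the heart of the argument. Integrating by parts once yields $\int_U\beta_\epsilon'(\rho)(\rho G_\rho)_x\,dx=-\int_U\rho\,\beta_\epsilon''(\rho)\rho_x\,G_\rho\,dx$. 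The key algebraic observation is that, setting $B_\epsilon(s):=s\beta_\epsilon'(s)-\beta_\epsilon(s)$, one has $B_\epsilon'(s)=s\beta_\epsilon''(s)$, so $\rho\,\beta_\epsilon''(\rho)\rho_x=\partial_x B_\epsilon(\rho)$; a second integration by parts then converts this contribution into $\int_U B_\epsilon(\rho)(G_\rho)_x\,dx$.

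The final step is to control $B_\epsilon$. For the chosen $\beta_\epsilon$ a direct computation gives $B_\epsilon(s)=\epsilon\bigl(1-\epsilon/\sqrt{s^2+\epsilon^2}\bigr)$, so $0\le B_\epsilon(s)\le\epsilon$ uniformly in $s$; meanwhile $(G_\rho)_x$ is bounded on $U$, since it equals $\int_{x-R}^{x+R}\rho\,dy-R[\rho(x+R)+\rho(x-R)]$, which is finite because $\rho\in C^2_{per}(U)$. Hence the convection contribution is $O(\epsilon)$. Combining the three estimates gives $\frac{d}{dt}\int_U\beta_\epsilon(\rho)\,dx\le C\epsilon$; integrating in time from $0$ to $t$ and letting $\epsilon\to0$, dominated convergence (using $\beta_\epsilon(\rho)\le|\rho|$) replaces $\int_U\beta_\epsilon(\rho)\,dx$ by $\|\rho\|_1$ at both endpoints and yields the claimed bound $\|\rho(t)\|_1\le\|\rho_0\|_1$.

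I expect the main obstacle to be precisely the convection term: a naive estimate of $\int_U\rho\,\beta_\epsilon''(\rho)\rho_x\,G_\rho\,dx$ fails because $\beta_\epsilon''$ concentrates like $\epsilon^{-1}$ near the zero set of $\rho$, and it is only the exact primitive structure $\rho\,\beta_\epsilon''(\rho)=B_\epsilon'(\rho)$, together with the divergence form of the nonlinearity, that lets this term be integrated by parts into something uniformly small. A minor technical point is the behaviour at $t=0$, where $\rho_0$ is merely $L^1_{per}$; this can be handled by integrating from $\tau>0$ and letting $\tau\to0$, invoking continuity in time.
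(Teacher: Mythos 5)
Your proof is correct, and while it starts from the same general device as the paper (a smooth convex regularization of the absolute value, multiplied against the equation and integrated), it handles the decisive transport term in a genuinely different way. The paper keeps the term $-\int_U \rho\,G_\rho\,\chi_\epsilon''(\rho)\rho_x\,dx$ as is, splits it by Cauchy--Schwarz and Young's inequality so that one factor is absorbed into the diffusion, and then exploits the fact that $\rho^2\chi_\epsilon''(\rho)$ is supported on $\{|\rho|\le\epsilon\}$ and of size $O(\epsilon)$ there; combined with $\|G_\rho\|_\infty\le R\|\rho\|_1\le R\int_U\chi_\epsilon(\rho)$ this yields a Gr\"onwall inequality for $\int_U\chi_\epsilon(\rho)$ with an $O(\epsilon)$ growth rate. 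You instead observe that $\rho\,\beta_\epsilon''(\rho)\rho_x=\partial_x B_\epsilon(\rho)$ with $B_\epsilon(s)=s\beta_\epsilon'(s)-\beta_\epsilon(s)$ and integrate by parts a second time, landing on $\int_U B_\epsilon(\rho)(G_\rho)_x\,dx$ with $0\le B_\epsilon\le\epsilon$ --- the standard Kruzhkov-type entropy-flux cancellation. Your route is cleaner: it uses the diffusion term only through its sign, needs no Gr\"onwall argument, and makes transparent that only the boundedness of $(G_\rho)_x$ (guaranteed by the assumed $C^2_{per}$ regularity on compact time intervals) is required. What the paper's version buys is that its constants are controlled by $\|\rho(t)\|_1$ alone rather than $\|\rho(t)\|_\infty$, which is cosmetically closer to a self-contained a priori estimate, though both proofs ultimately invoke the assumed smoothness to guarantee finiteness of the relevant quantities before sending $\epsilon\to0$. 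Your closing remarks --- that the naive Cauchy--Schwarz bound would fail without the primitive structure, and that the $t=0$ endpoint can be handled by integrating from $\tau>0$ --- are both apt; the paper glosses over the latter point.
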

\begin{proof}
	Let $\epsilon>0$ and define 
	\begin{equation}
	\chi_{\epsilon}\left(r\right)=\begin{cases}
	\left|r\right| & \left|r\right|>\epsilon\\
	-\frac{r^{4}}{8\epsilon^{3}}+\frac{3r^{2}}{4\epsilon}+\frac{3\epsilon}{8} & \left|r\right|\leq\epsilon
	\end{cases}. 
	\label{eq:chi_def}
	\end{equation}
	This is a convex $C^{2}$-approximation of the absolute value function
	satisfying $\left|r\right|\leq\chi_{\epsilon}\left(r\right)$. Multiplying
	$\chi_{\epsilon}^{\prime}\left(\rho\right)$ to equation (\ref{eq:evo_equation})
	and integrating by parts over $U$, we have
	\begin{equation}
	\frac{d}{dt}\int_{U}\chi_{\epsilon}\left(\rho\left(x,t\right)\right)\, dx+\frac{\sigma^{2}}{2}\left\Vert \rho_{x}\left(t\right)\left[\chi_{\epsilon}^{\prime\prime}\left(\rho\left(t\right)\right)\right]^{1/2}\right\Vert _{2}^{2}=-\int_{U}\rho G_{\rho}\chi_{\epsilon}^{\prime\prime}\left(\rho\right)\rho_{x}\, dx.
	\end{equation}
	By Cauchy-Schwarz and Young's inequality, 
	\begin{eqnarray}
	& & \frac{d}{dt}\int_{U}\chi_{\epsilon}\left(\rho\left(x,t\right)\right)\, dx+\frac{\sigma^{2}}{2}\left\Vert \rho_{x}\left(t\right)\left[\chi_{\epsilon}^{\prime\prime}\left(\rho\left(t\right)\right)\right]^{1/2}\right\Vert _{2}^{2}\nonumber \\
	& \leq & \left\Vert \rho\left(t\right)G_{\rho}\left(t\right)\left[\chi_{\epsilon}^{\prime\prime}\left(\rho\left(t\right)\right)\right]^{1/2}\right\Vert _{2}\left\Vert \rho_{x}\left(t\right)\left[\chi_{\epsilon}^{\prime\prime}\left(\rho\left(t\right)\right)\right]^{1/2}\right\Vert _{2}\nonumber \\
	& \leq & \frac{1}{2\sigma^{2}}\left\Vert \rho\left(t\right)G_{\rho}\left(t\right)\left[\chi_{\epsilon}^{\prime\prime}\left(\rho\left(t\right)\right)\right]^{1/2}\right\Vert _{2}^{2}+\frac{\sigma^{2}}{2}\left\Vert \rho_{x}\left(t\right)\left[\chi_{\epsilon}^{\prime\prime}\left(\rho\left(t\right)\right)\right]^{1/2}\right\Vert _{2}^{2}. 
	\end{eqnarray}
	Therefore, 
	\begin{eqnarray}
	\frac{d}{dt}\int_{U}\chi_{\epsilon}\left(\rho\left(x,t\right)\right)\, dx & \leq & \frac{1}{2\sigma^{2}}\left\Vert \rho\left(t\right)G_{\rho}\left(t\right)\left[\chi_{\epsilon}^{\prime\prime}\left(\rho\left(t\right)\right)\right]^{1/2}\right\Vert _{2}^{2}\nonumber \\
	& \leq & \frac{1}{2\sigma^{2}}\left\Vert G_{\rho}\left(t\right)\right\Vert _{\infty}^{2}\left\Vert \rho\left(t\right)\left[\chi_{\epsilon}^{\prime\prime}\left(\rho\left(t\right)\right)\right]^{1/2}\right\Vert _{2}^{2}.\label{eq:L1_beforelim}
	\end{eqnarray}
	Replacing $\psi$ by $\rho$ in (\ref{eq:G_infty_est}) we have 
	\begin{equation}
	\left\Vert G_{\rho}\left(t\right)\right\Vert _{\infty}\leq R\left\Vert \rho\left(t\right)\right\Vert _{1}.\label{eq:Gtilda}
	\end{equation}
	The term $\left\Vert \rho\left(t\right)\left[\chi_{\epsilon}^{\prime\prime}\left(\rho\left(t\right)\right)\right]^{1/2}\right\Vert _{2}^{2}$
	can be split into two integrals: 
	\begin{eqnarray}
	\left\Vert \rho\left(t\right)\left[\chi_{\epsilon}^{\prime\prime}\left(\rho\left(t\right)\right)\right]^{1/2}\right\Vert _{2}^{2} & = & \int_{U}\rho^{2}\chi_{\epsilon}^{\prime\prime}\left(\rho\right)\, dx\nonumber \\
	& = & \int_{U}\rho^{2}\chi_{\epsilon}^{\prime\prime}\left(\rho\right)\mathbf{1}_{\left\{ \left|\rho\right|>\epsilon\right\} }\, dx\nonumber \\
	&  & +\int_{U}\rho^{2}\chi_{\epsilon}^{\prime\prime}\left(\rho\right)\mathbf{1}_{\left\{ \left|\rho\right|\leq\epsilon\right\} }\, dx.
	\label{eq:bef_second_one}
	\end{eqnarray}
	For $\left|\rho\right|>\epsilon$, $\chi_{\epsilon}^{\prime\prime}\left(\rho\right)=0$
	by construction, and hence the first integral above is zero. The second integral is estimated as:
	\begin{eqnarray}
	\int_{U}\rho^{2}\chi_{\epsilon}^{\prime\prime}\left(\rho\right)\mathbf{1}_{\left\{ \left|\rho\right|\leq\epsilon\right\} }\, dx
	& = & \int_{U}\frac{3\rho^{2}\left(\epsilon^{2}-\rho^{2}\right)}{2\epsilon^{3}}\mathbf{1}_{\left\{ \left|\rho\right|\leq\epsilon\right\} }dx\nonumber \\
	& \leq & \int_{U}\frac{3\epsilon}{2} \, dx
	=  3\ell\epsilon . 
	\label{eq:second_one}
	\end{eqnarray}
	Therefore, by~(\ref{eq:Gtilda}, \ref{eq:bef_second_one}, \ref{eq:second_one}),
	Eq.~(\ref{eq:L1_beforelim}) becomes 
	\begin{equation*}
	\frac{d}{dt}\int_{U}\chi_{\epsilon}\left(\rho\left(x,t\right)\right)dx\leq\frac{3\ell R^{2}\epsilon}{2\sigma^{2}}\left\Vert \rho\left(t\right)\right\Vert _{1}^{2}\leq\frac{3\ell R^{2}\epsilon}{2 \sigma^{2}}\left[\int_{U}\chi_{\epsilon}\left(\rho\left(x,t\right)\right) dx\right]^{2}.
	\end{equation*}
	Applying Gr\"{o}nwall's inequality, we get
	\begin{eqnarray}
	\int_{U}\chi_{\epsilon}\left(\rho\left(x,t\right)\right)dx & \leq & \left(\int_{U}\chi_{\epsilon}\left(\rho_{0}\left(x\right)\right)dx\right).\nonumber \\
	&  & \times\exp\left[\frac{3\ell R^{2}\epsilon}{2\sigma^{2}}\int_{0}^{t}\int_{U}\chi_{\epsilon}\left(\rho\left(x,s\right)\right) dx\, ds\right].
	\label{eq:L1_gronwall}
	\end{eqnarray}
	Since $\rho$ is continuous, the integral in the exponential is finite.
	Therefore, taking the limit $\epsilon\rightarrow0$ yields
	\begin{equation}
	\left\Vert \rho\left(t\right)\right\Vert _{1}\leq\left\Vert \rho_{0}\right\Vert _{1},
	\end{equation}
	for every $t\geq0$, as required. 
\qquad\end{proof}

Incidentally, Lemma \ref{lem:L1} establishes the nonnegativity of
$\rho$. This is important because $\rho$ represents the density of opinions of individuals and, as such, is necessarily nonnegative at all times. It is interesting that a $L_{per}^1$ estimate suffices to show nonnegativity and no arguments from maximum principles are required. 
\begin{corollary}\label{cor:positivity}
If $\rho\in C^{1}\left(0,\infty;C_{per}^{2}\left(U\right)\right)$ is
a solution of (\ref{eq:evo_equation}), with $\rho_{0}\geq0$
and $\int_{U}\rho_{0}\left(x\right)dx=1$, then $\left\Vert \rho\left(t\right)\right\Vert _{1}=1$
and $\rho\left(t\right)\geq0$ in $U$ for all $t\geq0$. 
\end{corollary}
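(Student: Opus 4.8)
The plan is to combine the mass-conservation identity (\ref{eq:normalization_cond}) with the $L^1_{per}$ bound of Lemma \ref{lem:L1}: together they pin down the $L^1$ norm exactly and, as a byproduct, force nonnegativity without any recourse to a maximum principle. First I would record that the signed integral of $\rho$ is conserved. Integrating (\ref{eq:evo_equation}) over $U$ and using the periodic boundary conditions, the right-hand side $\left(\rho G_{\rho}\right)_x + \frac{\sigma^2}{2}\rho_{xx}$ integrates to the boundary evaluation of $\rho G_{\rho} + \frac{\sigma^2}{2}\rho_x$, which vanishes by periodicity; this is exactly the content of (\ref{eq:normalization_cond}). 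Hence $\frac{d}{dt}\int_U \rho(x,t)\,dx = 0$, and since $\int_U \rho_0\,dx = 1$ we obtain $\int_U \rho(x,t)\,dx = 1$ for all $t \ge 0$.

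Next, because $\rho_0 \ge 0$ we have $\left\Vert \rho_0 \right\Vert_1 = \int_U \rho_0\,dx = 1$, so Lemma \ref{lem:L1} yields $\left\Vert \rho(t) \right\Vert_1 \le 1$ for every $t \ge 0$. On the other hand, the elementary inequality $\int_U \rho(x,t)\,dx \le \int_U |\rho(x,t)|\,dx = \left\Vert \rho(t) \right\Vert_1$, combined with the mass-conservation identity just established, gives $1 \le \left\Vert \rho(t) \right\Vert_1$. Sandwiching the two bounds produces $\left\Vert \rho(t) \right\Vert_1 = 1 = \int_U \rho(x,t)\,dx$, which is the first assertion of the corollary.

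Finally, I would extract nonnegativity from this equality of norms. Writing $\int_U \left(|\rho| - \rho\right)\,dx = \left\Vert \rho(t) \right\Vert_1 - \int_U \rho\,dx = 0$ and observing that the integrand $|\rho| - \rho$ is pointwise nonnegative (it vanishes where $\rho \ge 0$ and equals $-2\rho > 0$ where $\rho < 0$), the vanishing of its integral forces $|\rho(x,t)| = \rho(x,t)$, i.e.\ $\rho(x,t) \ge 0$, almost everywhere. I expect the only delicate point to be upgrading this almost-everywhere statement to the pointwise conclusion $\rho(t) \ge 0$ on all of $U$; here the regularity hypothesis $\rho \in C^{1}\left(0,\infty;C^{2}_{per}(U)\right)$ is essential, since continuity of $\rho(\cdot,t)$ rules out negative values on a measure-zero set. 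Aside from this use of continuity, the argument is a short deterministic chain of inequalities, with no substantial obstacle beyond invoking the two prior facts correctly.
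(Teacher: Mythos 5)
Your proposal is correct and follows essentially the same route as the paper: mass conservation from periodicity gives $\int_U\rho=1$, Lemma \ref{lem:L1} gives $\left\Vert\rho(t)\right\Vert_1\leq 1$, the sandwich yields equality, and the vanishing of $\int_U\left(\left\vert\rho\right\vert-\rho\right)dx$ (which the paper phrases equivalently via the split into $\left\{\rho\geq 0\right\}$ and $\left\{\rho<0\right\}$) forces nonnegativity a.e., upgraded to everywhere by continuity. No changes needed.
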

\begin{proof}
	Since $\int_{U}\rho_{0}\left(x\right)dx=1$, the normalization condition
	(\ref{eq:normalization_cond}) is satisfied for $t>0$. Applying Lemma
	\ref{lem:L1}, we have
	\begin{equation}
	1=\int_{U}\rho\left(x,t\right)dx\leq\left\Vert \rho\left(t\right)\right\Vert _{1}\leq\left\Vert \rho_{0}\right\Vert _{1}=\int_{U}\rho_{0}\, dx=1.
	\end{equation}
	Hence, $\left\Vert \rho\left(t\right)\right\Vert _{1}=1$. But
	\begin{eqnarray}
	1 & = & \int_{U}\rho\left(x,t\right)dx=\int_{U}\rho\mathbf{1}_{\left\{ \rho\geq0\right\} }dx+\int_{U}\rho\mathbf{1}_{\left\{ \rho<0\right\} }\, dx,\\
	1 & = & \int_{U}\left|\rho\left(x,t\right)\right|dx=\int_{U}\rho\mathbf{1}_{\left\{ \rho\geq0\right\} }dx-\int_{U}\rho\mathbf{1}_{\left\{ \rho<0\right\} }dx.
	\end{eqnarray}
	These equations imply that $\int_{U}\rho\mathbf{1}_{\left\{ \rho<0\right\} }dx=0$,
	and hence, $\rho\left(t\right)\geq0$ a.e. in $U$. By continuity, $\rho\left(t\right)\geq0$
	in $U$ for all $t\geq0$. 
\qquad\end{proof}

With Lemma \ref{lem:L1}, it follows from~(\ref{eq:G_infty_est}) that
\begin{equation}
\left\Vert G_{\psi}\left(t\right)\right\Vert _{\infty}\leq2R\label{eq:est_1}
\end{equation}
Next, we also have
\begin{eqnarray}
\left(G_{\psi}\left(x,t\right)\right)^2 & = & \left(\int_{x-R}^{x+R}\left(x-y\right) \psi\left(y,t\right)dy \right)^2 \nonumber \\
& \leq & \int_{x-R}^{x+R}\left(x-y\right)^2 dy \int_{x-R}^{x+R} \left(\psi\left(y,t\right)\right)^2dy \nonumber \\
& = & \frac{2}{3} R^3 \int_{x-R}^{x+R} \left(\psi\left(y,t\right)\right)^2dy.
\end{eqnarray}
Consequently, 
\begin{eqnarray}
\left\Vert G_{\psi}\left(t\right)\right\Vert^2_{2} & \leq & \frac{2}{3} R^3 \int_{U} \int_{x-R}^{x+R} \left(\psi\left(y,t\right)\right)^2 dy dx \nonumber \\
& = & \frac{2}{3} R^3 \int_{U} \int_{-R}^{+R} \left(\psi\left(x+y,t\right)\right)^2 dy dx \nonumber \\
& = & \frac{4R^4}{3} \left\Vert \psi\left(t\right)\right\Vert^2_{2}.\label{eq:est_2}
\end{eqnarray}
With estimates (\ref{eq:est_1}) and (\ref{eq:est_2}), (\ref{eq:energy_est1})
becomes
\begin{eqnarray}
\frac{1}{2}\frac{d}{dt}\left\Vert \psi\left(t\right)\right\Vert _{2}^{2}+\frac{\sigma^{2}}{2}\left\Vert \psi_{x}\left(t\right)\right\Vert _{2}^{2} & \leq & 
\left(2R+\frac{R^2}{\sqrt{3}\, \ell}\right)\left\Vert \psi\left(t\right)\right\Vert _{2}\left\Vert \psi_{x}\left(t\right)\right\Vert_{2}\nonumber \\
& \leq & \frac{\sigma^{2}}{4}\left\Vert \psi_{x}\left(t\right)\right\Vert _{2}^{2}\nonumber \\
&  & +\frac{1}{\sigma^2} \left(2R+\frac{R^2}{\sqrt{3}\, \ell}\right)^2 \left\Vert \psi\left(t\right)\right\Vert _{2}^{2}.
\end{eqnarray}
Hence, we have
\begin{equation}
\frac{1}{2}\frac{d}{dt}\left\Vert \psi\left(t\right)\right\Vert _{2}^{2}\leq
\frac{1}{\sigma^2} \left(2R+\frac{R^2}{\sqrt{3}\, \ell}\right)^2 
\left\Vert \psi\left(t\right)\right\Vert _{2}^{2}-\frac{\sigma^{2}}{4}\left\Vert \psi_{x}\left(t\right)\right\Vert _{2}^{2}.\label{eq:energy_est2}
\end{equation}
By construction, $\int_{U}\psi\left(x,t\right)dx=0$ for all $t$.
Thus, $\psi\left(t\right)$ satisfies the Poincar\'{e} inequality
\begin{equation}
\left\Vert \psi\left(t\right)\right\Vert _{2}\leq C\left\Vert \psi_{x}\left(t\right)\right\Vert _{2}.
\end{equation}
For the interval $U=\left[-\ell,\ell\right]$, the optimal Poincar\'{e}
constant is $C=\ell/\pi$. Therefore, (\ref{eq:energy_est2}) becomes
\begin{equation}
\frac{d}{dt}\left\Vert \psi\left(t\right)\right\Vert _{2}^{2}
\leq\left(\frac{2}{\sigma^2} \left(2R+\frac{R^2}{\sqrt{3}\, \ell}\right)^2 -\frac{\pi^{2}\sigma^{2}}{2\ell^{2}}\right)\left\Vert \psi\left(t\right)\right\Vert _{2}^{2}.
\end{equation}
But  
\begin{equation}
\left\Vert \psi\left(0\right)\right\Vert _{2}^{2}=\left\Vert \rho_{0}-\frac{1}{2\ell}\right\Vert _{2}^{2}\leq 2\left\Vert \rho_{0}\right\Vert _{2}^{2}+\frac{1}{\ell} \, .
\end{equation}
Thus we obtain the integral form of (\ref{eq:energy_est2}): 
\begin{equation}
\left\Vert \psi\left(t\right)\right\Vert _{2}^{2}
\leq\left(2\left\Vert \rho_{0}\right\Vert _{2}^{2}+\frac{1}{\ell}\right)\exp
\left\{{\left(\frac{2}{\sigma^2}\left(2R+\frac{R^2}{\sqrt{3}\, \ell}\right)^2-\frac{\pi^{2}\sigma^{2}}{2\ell^{2}}\right)t}\right\}.
\end{equation}
In particular, if $\sigma^2>\frac{2\ell}{\pi} \left(2R+{R^2}/{\sqrt{3}\, \ell}\right)$,
the constant factor in the exponential is negative, therefore
$\left\Vert \psi\left(t\right)\right\Vert _{2}^{2}\rightarrow0$ as
long as $\left\Vert \rho_{0}\right\Vert _{2}<\infty$. We summarize
these results:

\begin{theorem}
	Let $\rho\in C^{1}\left(0,\infty;C_{per}^{2}\left(U\right)\right)$
	be a solution of (\ref{eq:evo_equation}) with $\rho_{0}\geq0$,
	$\int_{U}\rho_{0}\left(x\right)dx=1$, and $\rho_{0}\in{L}_{per}^{2}\left(U\right)$.
	If $\sigma^2>\frac{2\ell}{\pi} \left(2R+{R^2}/{\sqrt{3}\, \ell}\right)$, 
	then $\rho\left(t\right)\rightarrow\frac{1}{2\ell}$ in ${L}_{per}^{2}$
	exponentially as $t\rightarrow\infty$. \label{thm:apriori_stability}
\end{theorem}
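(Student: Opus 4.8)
The plan is to run an $L^2$ energy estimate on the centered quantity $\psi = \rho - \frac{1}{2\ell}$, whose mean vanishes, and to close that estimate by controlling the nonlinear term $G_\psi$ uniformly in time. First I would substitute $\rho = \psi + \frac{1}{2\ell}$ into (\ref{eq:evo_equation}); since $\int_{x-R}^{x+R}(x-y)\,dy = 0$, the constant survives only through the linear part, yielding equation (\ref{eq:psi}). Multiplying by $\psi$, integrating over $U$, and using periodicity to discard the boundary terms gives the basic energy identity, and Cauchy--Schwarz then produces (\ref{eq:energy_est1}), in which the right-hand side is governed by $\|G_\psi\|_\infty$ and $\|G_\psi\|_2$.

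The core of the argument is to bound these two norms of $G_\psi$. For the sup norm, the pointwise estimate (\ref{eq:G_infty_est}) reduces everything to controlling $\|\psi\|_1 \le 1 + \|\rho\|_1$; here Corollary \ref{cor:positivity} delivers $\|\rho(t)\|_1 = 1$, giving the clean bound $\|G_\psi\|_\infty \le 2R$ of (\ref{eq:est_1}). For the $L^2$ norm, I would apply Cauchy--Schwarz inside the integral defining $G_\psi$, then change variables and use Fubini to obtain $\|G_\psi\|_2^2 \le \tfrac{4R^4}{3}\|\psi\|_2^2$ as in (\ref{eq:est_2}). Feeding both into (\ref{eq:energy_est1}) and absorbing the gradient factor via Young's inequality yields the differential inequality (\ref{eq:energy_est2}).

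The last step is to exploit that $\psi$ has zero mean: the sharp Poincar\'e inequality on $[-\ell,\ell]$, with optimal constant $\ell/\pi$, lets me replace $-\|\psi_x\|_2^2$ by $-\tfrac{\pi^2}{\ell^2}\|\psi\|_2^2$, converting (\ref{eq:energy_est2}) into $\frac{d}{dt}\|\psi\|_2^2 \le \lambda\,\|\psi\|_2^2$ with $\lambda = \frac{2}{\sigma^2}\big(2R + R^2/(\sqrt{3}\,\ell)\big)^2 - \frac{\pi^2\sigma^2}{2\ell^2}$. Together with the initial bound $\|\psi(0)\|_2^2 \le 2\|\rho_0\|_2^2 + 1/\ell$, Gr\"onwall's inequality then gives exponential decay precisely when $\lambda<0$, which rearranges to the stated threshold $\sigma^2 > \frac{2\ell}{\pi}\big(2R + R^2/(\sqrt{3}\,\ell)\big)$.

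The hard part will be the uniform $L^1$ control of $\rho$, namely Lemma \ref{lem:L1}. The naive inequality $\|G_\psi\|_\infty \le R(1+\|\rho\|_1)$ is circular and useless without an a priori cap on $\|\rho\|_1$, and for a nonlinear partial integro-differential equation such a bound is not automatic. I expect the decisive device to be the convex $C^2$-regularization $\chi_\epsilon$ of the absolute value, which replaces a maximum-principle argument and simultaneously yields both the $L^1$ bound and, as a corollary, the nonnegativity of $\rho$; everything downstream is then a matter of assembling standard energy and Poincar\'e estimates.
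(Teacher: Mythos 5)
Your proposal is correct and follows essentially the same route as the paper: the $\psi=\rho-\frac{1}{2\ell}$ decomposition, the $L^\infty$ and $L^2$ bounds on $G_\psi$ via the $\chi_\epsilon$-based $L^1$ estimate of Lemma~\ref{lem:L1} and Corollary~\ref{cor:positivity}, the optimal Poincar\'e constant $\ell/\pi$, and Gr\"onwall. Nothing essential is missing.
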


\subsection*{Physical Significance of Theorem~\ref{thm:apriori_stability}}
The noisy {\it HK} model generally exhibits two types of steady-states. The first is a spatially uniform steady-state, i.e., $\rho$ is constant. This represents the case where individuals have uniformly distributed opinions, without any local or global consensus. The second involves one or more clusters of individuals having similar opinions, in which case $\rho$ is a multi-modal profile. Which of these two steady-states appear in the long-time limit depends on the interaction radius $R$ and noise $\sigma$, as well as the initial profile $\rho_0$. 

In this aspect, Theorem~\ref{thm:apriori_stability} gives a sufficient condition for the spatially uniform solution to be the globally attractive state, irrespective of the initial profile $\rho_0$. In other words, as long as 
$\sigma^2>\frac{2\ell}{\pi} \left(2R+{R^2}/{\sqrt{3}\, \ell}\right)$,
any initial profile converges to the spatially uniform state. In particular, clustered profiles do not even have local stability. 
This immediately indicates a forbidden zone for consensus: when the volatility of one's opinion is too large compared to the interaction radius, there can be no clustering of opinions regardless of the initial opinion distribution. 
It should be noted that this is the first result regarding the nonlinear stability of the {\em HK} system. 
On the other hand, it is straightforward to perform linear stability analysis of Eq. (\ref{eq:evo_equation})
at the uniform solution $\rho=\frac{1}{2\ell}$ to derive a linear stability condition for the uniform solution~\cite{wang2015inprep}. The combination of these two results indicate a region where it is possible to have both clustered and uniform states as locally stable solutions (see Figure~\ref{fig:pd}). 
\begin{figure}
	\centering
	\includegraphics[width=\textwidth]{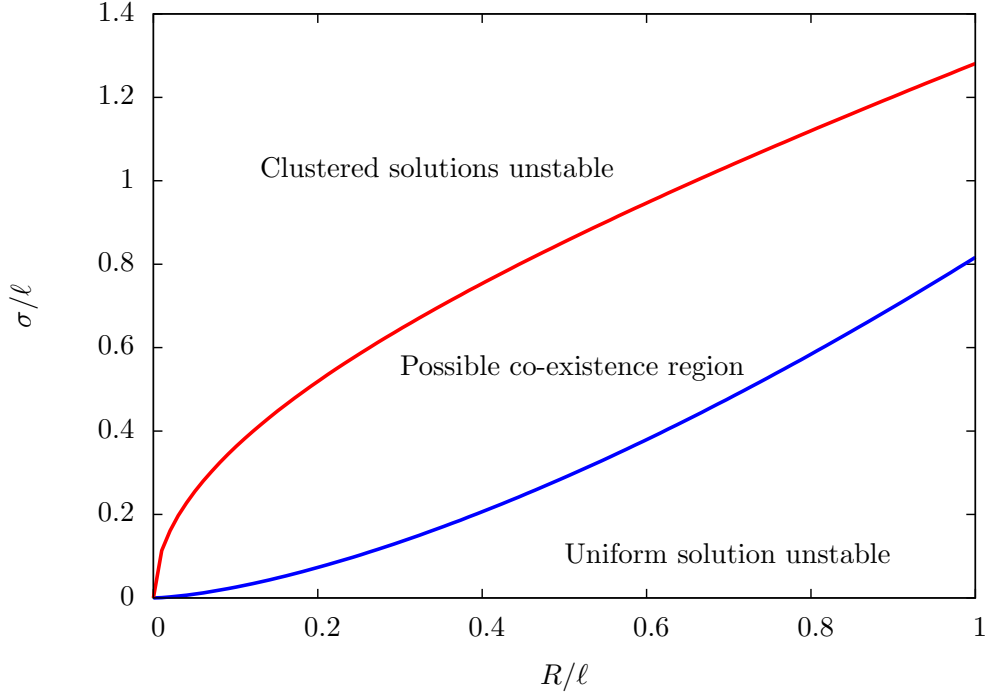}
	\caption{The phase diagram for the noisy {\it HK} model. The top (red) curve is the curve $\sigma^2=\frac{2\ell}{\pi} \left(2R+{R^2}/{\sqrt{3}\, \ell}\right)$, above which the spatially uniform solution ($\rho = 1/2\ell$) is unconditionally stable, i.e. no clustering of opinions is possible. The bottom (blue) curve is obtained from linear stability analysis around the spatially uniform solution, and has the form $(\sigma/\ell)^2 = C (R/\ell)^3$. Below this curve, the uniform solution loses linear stability and only clustered solutions are permitted. Between these two curves is the region for which both clustered and uniform solutions can be stable with respect to small perturbations. }
	\label{fig:pd}
\end{figure}

\section{Existence and Uniqueness\label{sec:existenceuniqueness}}

Our discussion so far has assumed the existence of a solution to (\ref{eq:evo_equation}). In this section, 
we prove the existence and uniqueness of the weak solution by defining
a sequence of linear parabolic equations, whose solutions converge
strongly to a function $\rho$ that solves a weak formulation of Eq.~(\ref{eq:evo_equation}). 
To begin with, let $T>0$ and consider a sequence
of linear parabolic equations 
\begin{align}
& \begin{cases}
\begin{aligned}\rho_{n_{t}}-\frac{\sigma^{2}}{2}\rho_{n_{xx}} & = \, \left(\rho_{n}G_{\rho_{n-1}}\right)_{x} &  & \text{in}\ U_{T}\\
\rho_{n}\left(\ell,\cdot\right) & =\, \rho_{n}\left(-\ell,\cdot\right) &  & \text{on}\ \partial U\times\left[0,T\right]\\
\rho_{n} & =\, \rho_{0} &  & \text{on}\ U\times\left\{ t=0\right\} 
\end{aligned}
\end{cases}\label{eq:sequence_evolution}
\end{align}
for $n\geq1$, with $\rho_{0}\left(x,t\right):=\rho_{0}\left(x\right)$
for all $t>0$. For convenience, we assume that the initial condition
satisfies $\rho_{0}\in C_{per}^{\infty}\left(U\right)$, $\rho_{0}\geq0$
and $\int_{U}\rho_0\left(x\right)dx=1$. The smoothness condition will
be relaxed later (see Theorem~\ref{thm:Existnece-1} at the end of this section). 

Consider the case $n=1$. Since $\rho_{0}\in C_{per}^{\infty}\left(U\right)$
and both $G_{\rho_{0}}$ and $\left(G_{\rho_{0}}\right)_{x}$ are bounded,
by standard results on linear parabolic evolution equations,
there exists a unique $\rho_{1}\in C^{\infty}\left(0,T;C_{per}^{\infty}\left(U\right)\right)$
satisfying (\ref{eq:sequence_evolution}) for $n=1$. Iterating this
for $n>1$ implies that there exists a sequence of smooth functions
$\left\{ \rho_{n}:n\geq1\right\} $ satisfying (\ref{eq:sequence_evolution}).
Next, we establish some uniform energy estimates on $\rho_{n}$.
\begin{proposition}
	Let $T>0$ and suppose $\left\{ \rho_{n}:n\geq1\right\} $ satisfy
	(\ref{eq:sequence_evolution}) with $\rho_{0}\in C_{per}^{\infty}\left(U\right)$.
	Then, $\left\Vert \rho_{n}\left(t\right)\right\Vert _{1}\leq\left\Vert \rho_{0}\right\Vert _{1}$
	for all $0\leq t\leq T$ and $n\geq1$. \label{prop:unif_L1_est}
	\end{proposition}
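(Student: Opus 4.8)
The plan is to argue by induction on $n$, adapting the convex-approximation technique from the proof of Lemma~\ref{lem:L1}. The base case is the trivial observation that $\rho_0\left(x,t\right):=\rho_0\left(x\right)$ is constant in time, so that $\left\Vert \rho_0\left(t\right)\right\Vert_1=\left\Vert \rho_0\right\Vert_1$ for all $t$. Assuming $\left\Vert \rho_{n-1}\left(t\right)\right\Vert_1\leq\left\Vert \rho_0\right\Vert_1$ for all $0\leq t\leq T$, I will establish the same bound for $\rho_n$, which carries the induction forward with an $n$-independent constant.

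First I would multiply the linear equation in (\ref{eq:sequence_evolution}) by $\chi_\epsilon^{\prime}\left(\rho_n\right)$, where $\chi_\epsilon$ is the convex $C^2$ approximation of $\left|\cdot\right|$ defined in (\ref{eq:chi_def}), and integrate by parts over $U$. The smoothness $\rho_n\in C^{\infty}\left(0,T;C_{per}^{\infty}\left(U\right)\right)$ established above, together with the periodic boundary conditions, makes every boundary term vanish and yields
\[
\frac{d}{dt}\int_U \chi_\epsilon\left(\rho_n\right)\,dx+\frac{\sigma^2}{2}\left\Vert \rho_{n_x}\left[\chi_\epsilon^{\prime\prime}\left(\rho_n\right)\right]^{1/2}\right\Vert_2^2=-\int_U \rho_n G_{\rho_{n-1}}\chi_\epsilon^{\prime\prime}\left(\rho_n\right)\rho_{n_x}\,dx,
\]
exactly as in Lemma~\ref{lem:L1}, but with the crucial difference that the kernel $G_{\rho_{n-1}}$ is now a fixed, previously-constructed function rather than $G_{\rho_n}$ itself. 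Applying Cauchy--Schwarz and Young's inequality to absorb the gradient term into the left-hand side gives
\[
\frac{d}{dt}\int_U \chi_\epsilon\left(\rho_n\right)\,dx\leq\frac{1}{2\sigma^2}\left\Vert G_{\rho_{n-1}}\left(t\right)\right\Vert_\infty^2\left\Vert \rho_n\left[\chi_\epsilon^{\prime\prime}\left(\rho_n\right)\right]^{1/2}\right\Vert_2^2.
\]

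The key simplification, and what I expect makes this cleaner than Lemma~\ref{lem:L1}, is that the linearized structure removes the need for Gr\"{o}nwall's inequality. By the estimate (\ref{eq:Gtilda}) applied to $\rho_{n-1}$ together with the induction hypothesis, $\left\Vert G_{\rho_{n-1}}\left(t\right)\right\Vert_\infty\leq R\left\Vert \rho_{n-1}\left(t\right)\right\Vert_1\leq R\left\Vert \rho_0\right\Vert_1$ \emph{uniformly} in $t$; and the computation (\ref{eq:second_one}), verbatim with $\rho_n$ in place of $\rho$, bounds $\left\Vert \rho_n\left[\chi_\epsilon^{\prime\prime}\left(\rho_n\right)\right]^{1/2}\right\Vert_2^2\leq 3\ell\epsilon$. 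Hence the right-hand side is dominated by $\frac{3\ell R^2\left\Vert \rho_0\right\Vert_1^2}{2\sigma^2}\,\epsilon$, a quantity independent of $t$ and $n$ that vanishes with $\epsilon$.

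Finally I would integrate this differential inequality over $\left[0,t\right]$ to obtain
\[
\int_U \chi_\epsilon\left(\rho_n\left(t\right)\right)\,dx\leq\int_U \chi_\epsilon\left(\rho_0\right)\,dx+\frac{3\ell R^2\left\Vert \rho_0\right\Vert_1^2}{2\sigma^2}\,\epsilon\, t,
\]
and then let $\epsilon\rightarrow 0$, using $\left\Vert \rho_n\left(t\right)\right\Vert_1\leq\int_U\chi_\epsilon\left(\rho_n\left(t\right)\right)\,dx$ on the left and $\int_U\chi_\epsilon\left(\rho_0\right)\,dx\rightarrow\left\Vert \rho_0\right\Vert_1$ on the right, to conclude $\left\Vert \rho_n\left(t\right)\right\Vert_1\leq\left\Vert \rho_0\right\Vert_1$. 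This closes the induction. The only point genuinely requiring care is the uniformity in $n$: it is essential to bound $\left\Vert G_{\rho_{n-1}}\right\Vert_\infty$ by the $n$-independent constant $R\left\Vert \rho_0\right\Vert_1$ via the induction hypothesis, so that no growing factor accumulates. Beyond this bookkeeping I do not anticipate a real obstacle, since the argument is simply a linearized, and therefore Gr\"{o}nwall-free, version of Lemma~\ref{lem:L1}.
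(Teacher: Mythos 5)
Your proof is correct and takes essentially the same route as the paper: both run the $\chi_\epsilon$-regularization argument of Lemma~\ref{lem:L1} on the linearized equation, using $\left\Vert G_{\rho_{n-1}}\right\Vert_\infty\le R\left\Vert\rho_{n-1}\right\Vert_1$ together with $\left\Vert\rho_n\left[\chi''_\epsilon\left(\rho_n\right)\right]^{1/2}\right\Vert_2^2\le 3\ell\epsilon$ and then sending $\epsilon\to0$. The only (harmless) difference is organizational: you close the bound by induction on $n$ and a direct time integration, whereas the paper skips the induction and invokes Gr\"onwall, relying only on the finiteness (from smoothness of $\rho_{n-1}$) of the integral in the exponent --- in both cases the error term carries a factor of $\epsilon$ and vanishes in the limit.
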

\begin{proof}
	Since we know that $\rho_{n}\left(t\right)\in C^{\infty}\left(0,T;C_{per}^{\infty}\left(U\right)\right)$
	for all $n\geq1$ and all $0\leq t\leq T$, we can proceed exactly
	as in the proof of Lemma \ref{lem:L1}. In this case, instead of (\ref{eq:L1_gronwall})
	we have 
	\begin{eqnarray}
	\int_{U}\chi_{\epsilon}\left(\rho_{n}\left(x,t\right)\right)dx & \leq & \left(\int_{U}\chi_{\epsilon}\left(\rho_{0}\left(x\right)\right)dx\right)\nonumber \\
	&  & \times\exp\left[\frac{3\ell R^{2}\epsilon}{2\sigma^{2}}\int_{0}^{t}\int_{U}\chi_{\epsilon}\left(\rho_{n-1}\left(x,s\right)\right)dxds\right].
	\end{eqnarray}
	Since $\rho_{n-1}$ is smooth, the integral in the exponential is
	finite, hence we take the limit $\epsilon\rightarrow0$ to obtain
	\begin{equation}
	\left\Vert \rho_{n}\left(t\right)\right\Vert _{1}\leq\left\Vert \rho_{0}\right\Vert _{1},
	\end{equation}
	for all $0\leq t\leq T$. \qquad\end{proof}

\begin{corollary}
	Let $T>0$ and suppose $\left\{ \rho_{n}:n\geq1\right\} $ satisfy
	(\ref{eq:sequence_evolution}) with $\rho_{0}\in C_{per}^{\infty}\left(U\right)$,
	$\rho_{0}\geq0$ and $\int_{U}\rho_{0}\left(x\right)dx=1$. Then,
	$\left\Vert \rho_{n}\left(t\right)\right\Vert _{1}=1$ and $\rho_{n}\left(t\right)\geq0$
	in $U$ for all $0\leq t\leq T$ and for all $n\geq1$. \label{cor:positivity_n}
\end{corollary}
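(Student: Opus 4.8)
The plan is to follow the proof of Corollary~\ref{cor:positivity} almost verbatim, with Proposition~\ref{prop:unif_L1_est} supplying the role that Lemma~\ref{lem:L1} played there. The one ingredient that is not yet recorded for the sequence $\{\rho_n\}$ is the mass-conservation identity $\int_U \rho_n(x,t)\,dx = 1$, the analogue of (\ref{eq:normalization_cond}). Once that is in hand, the conclusion $\|\rho_n(t)\|_1 = 1$ follows by squeezing it between the normalization and the bound of Proposition~\ref{prop:unif_L1_est}, and nonnegativity follows from the same positive/negative-part decomposition. So the real content is just establishing normalization for each $n$.

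First I would verify that $\int_U \rho_n(x,t)\,dx$ is conserved. Integrating the linear equation in (\ref{eq:sequence_evolution}) over $U$ gives
\begin{equation*}
\frac{d}{dt}\int_U \rho_n(x,t)\,dx = \frac{\sigma^2}{2}\int_U \rho_{n_{xx}}\,dx + \int_U \bigl(\rho_n G_{\rho_{n-1}}\bigr)_x\,dx .
\end{equation*}
Both integrands on the right are $x$-derivatives of functions that are periodic on $U$: for the diffusion term this is because $\rho_n(t)\in C^\infty_{per}(U)$, and for the flux term I would first check that $G_{\rho_{n-1}}$ is periodic. Writing $G_{\rho_{n-1}}(x,t) = -\int_{-R}^{R} z\,\rho_{n-1}(x+z,t)\,dz$ via the substitution $y = x+z$ in (\ref{eq:G_definition}), the $2\ell$-periodicity of $\rho_{n-1}$ immediately transfers to $G_{\rho_{n-1}}$, so $\rho_n G_{\rho_{n-1}}$ is periodic as well. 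Hence both boundary contributions cancel at $x=\pm\ell$, the right-hand side vanishes, and $\int_U \rho_n(x,t)\,dx = \int_U \rho_0(x)\,dx = 1$ for every $0\le t\le T$ and every $n\ge 1$.

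With normalization established, I would close exactly as in Corollary~\ref{cor:positivity}: the chain $1 = \int_U \rho_n(x,t)\,dx \le \|\rho_n(t)\|_1 \le \|\rho_0\|_1 = 1$, using Proposition~\ref{prop:unif_L1_est}, forces $\|\rho_n(t)\|_1 = 1$; then comparing the two expressions $1 = \int_U \rho_n\,dx = \int_U \rho_n \mathbf{1}_{\{\rho_n\ge 0\}}\,dx + \int_U \rho_n \mathbf{1}_{\{\rho_n<0\}}\,dx$ and $1 = \|\rho_n(t)\|_1 = \int_U \rho_n \mathbf{1}_{\{\rho_n\ge 0\}}\,dx - \int_U \rho_n \mathbf{1}_{\{\rho_n<0\}}\,dx$ yields $\int_U \rho_n \mathbf{1}_{\{\rho_n<0\}}\,dx = 0$, whence $\rho_n(t)\ge 0$ a.e., upgraded to everywhere by the smoothness of $\rho_n$. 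I do not expect any genuine obstacle here; the only point requiring a moment's care is the periodicity of $G_{\rho_{n-1}}$ that makes the flux boundary term vanish, and I would note that this step uses nothing about $\rho_{n-1}$ beyond its periodicity, so the normalization of each $\rho_n$ holds independently of the inductive nonnegativity of its predecessor.
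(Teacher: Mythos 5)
Your proof is correct and follows essentially the same route as the paper: the paper's own proof simply notes that periodicity of the $\rho_n$ gives $\int_U \rho_n(x,t)\,dx = 1$ and then declares the rest identical to Corollary~\ref{cor:positivity}, which is exactly the squeeze and sign-decomposition argument you carry out. You merely spell out the detail the paper leaves implicit, namely that $G_{\rho_{n-1}}$ inherits periodicity from $\rho_{n-1}$ so the flux boundary term vanishes, which is a correct and worthwhile verification.
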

\begin{proof}
	Since the functions $\rho_{n}$ are all periodic, we have $\int_{U}\rho_{n}\left(x,t\right)dx=1$;
	hence the proof is identical to Corollary \ref{cor:positivity}. \qquad\end{proof}

\begin{proposition}
	Let $T>0$ and suppose $\left\{ \rho_{n}:n\geq1\right\} $ satisfy
	(\ref{eq:sequence_evolution}) with $\rho_{0}\in C_{per}^{\infty}\left(U\right)$,
	$\rho_{0}\geq0$ and $\int_{U}\rho_{0}\left(x\right)dx=1$. Then,
	there exists a constant $C\left(T\right)>0$ such that 
	\begin{equation}
	{\left\Vert \rho_{n} \right\Vert}_{{L}^\infty\left(0,T;{L}^2_{per}\left(U\right)\right)} 
	+ {\left\Vert \rho_{n} \right\Vert}_{{L}^2\left(0,T;H^1_{per}\left(U\right)\right)} \leq C\left(T\right)\left\Vert \rho_{0}\right\Vert _{2}.\nonumber
	\end{equation}
	\label{prop:unif_L2_est}
\end{proposition}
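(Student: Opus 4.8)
The plan is to run a standard $L^2$ energy estimate on the linear equation (\ref{eq:sequence_evolution}), the one subtlety being that the resulting constant must be kept \emph{uniform in $n$}. The crucial enabling fact is that the transport coefficient $G_{\rho_{n-1}}$ is bounded in $L^\infty$ independently of $n$: combining the bound (\ref{eq:Gtilda}), namely $\left\Vert G_{\rho_{n-1}}(t)\right\Vert_\infty \leq R\left\Vert \rho_{n-1}(t)\right\Vert_1$, with the uniform $L^1$ estimate of Proposition~\ref{prop:unif_L1_est} (or the sharper identity $\left\Vert \rho_{n-1}(t)\right\Vert_1 = 1$ of Corollary~\ref{cor:positivity_n}), we obtain $\left\Vert G_{\rho_{n-1}}(t)\right\Vert_\infty \leq R$ for every $n$ and every $t \in [0,T]$. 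This is precisely what decouples the estimate from the index $n$.

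First I would multiply (\ref{eq:sequence_evolution}) by $\rho_n$ and integrate over $U$. Using the periodic boundary conditions to discard all boundary terms, integration by parts turns the diffusion term into $\frac{\sigma^2}{2}\left\Vert \rho_{n_x}(t)\right\Vert_2^2$ and the right-hand side into $-\int_U \rho_{n_x}\rho_n G_{\rho_{n-1}}\,dx$, giving the energy identity
\[
\frac{1}{2}\frac{d}{dt}\left\Vert \rho_n(t)\right\Vert_2^2 + \frac{\sigma^2}{2}\left\Vert \rho_{n_x}(t)\right\Vert_2^2 = -\int_U \rho_{n_x}\rho_n G_{\rho_{n-1}}\,dx.
\]
Bounding the right-hand side by $\left\Vert G_{\rho_{n-1}}(t)\right\Vert_\infty\left\Vert \rho_n(t)\right\Vert_2\left\Vert \rho_{n_x}(t)\right\Vert_2 \leq R\left\Vert \rho_n(t)\right\Vert_2\left\Vert \rho_{n_x}(t)\right\Vert_2$ and applying Young's inequality to split off a $\frac{\sigma^2}{4}\left\Vert \rho_{n_x}(t)\right\Vert_2^2$ term, I can absorb half the dissipation into the left side and arrive at
\[
\frac{d}{dt}\left\Vert \rho_n(t)\right\Vert_2^2 + \frac{\sigma^2}{2}\left\Vert \rho_{n_x}(t)\right\Vert_2^2 \leq \frac{2R^2}{\sigma^2}\left\Vert \rho_n(t)\right\Vert_2^2.
\]

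From here two standard conclusions follow. Dropping the (nonnegative) gradient term and applying Gr\"onwall's inequality yields $\left\Vert \rho_n(t)\right\Vert_2^2 \leq \left\Vert \rho_0\right\Vert_2^2 \exp\left(2R^2 t/\sigma^2\right)$, hence the $L^\infty(0,T;L^2_{per})$ bound with constant $e^{R^2 T/\sigma^2}$. Reinstating the gradient term and instead integrating the differential inequality over $[0,T]$ controls $\frac{\sigma^2}{2}\int_0^T \left\Vert \rho_{n_x}(t)\right\Vert_2^2\,dt$ by $\frac{1}{2}\left\Vert \rho_0\right\Vert_2^2$ plus $\frac{2R^2}{\sigma^2}\int_0^T\left\Vert \rho_n(t)\right\Vert_2^2\,dt$, the latter already controlled by the $L^\infty(0,T;L^2_{per})$ bound just obtained. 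Combining the resulting $L^2$-in-time bounds on $\rho_n$ and $\rho_{n_x}$ gives the $L^2(0,T;H^1_{per})$ bound, and adding the two pieces produces the stated inequality with a single constant $C(T)$ depending only on $R$, $\sigma$, and $T$.

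Since every step is a routine energy computation, there is no genuine analytic obstacle; the only point requiring care—and the one thing that makes the estimate worth stating—is verifying that $C(T)$ does not blow up as $n\to\infty$. As noted, this is guaranteed by the $n$-independent $L^\infty$ bound $\left\Vert G_{\rho_{n-1}}\right\Vert_\infty \leq R$ inherited from the conserved $L^1$ mass, so that the coefficient in Gr\"onwall's inequality is identical for every $n$.
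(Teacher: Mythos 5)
Your proof is correct and follows essentially the same route as the paper: multiply by $\rho_n$, integrate by parts, use the uniform-in-$n$ bound $\left\Vert G_{\rho_{n-1}}\right\Vert_\infty \leq R$ (from the $L^1$ conservation of Proposition~\ref{prop:unif_L1_est} and Corollary~\ref{cor:positivity_n}), apply Young's inequality to absorb half the dissipation, and conclude by Gr\"onwall. The only difference is cosmetic bookkeeping of constants (the paper's displayed coefficient $R/\sigma^2$ appears to be a typo for $R^2/\sigma^2$, which matches your computation).
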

\begin{proof}
	We proceed as in Section \ref{sec:apriori} by multiplying (\ref{eq:sequence_evolution}) by $\rho_{n}$ and integrating by parts. This gives us 
	\begin{eqnarray}
	\frac{1}{2}\frac{d}{dt}\left\Vert \rho_{n}\left(t\right)\right\Vert _{2}^{2}+\frac{\sigma^{2}}{2}\left\Vert \rho_{n_{x}}\left(t\right)\right\Vert _{2}^{2} 
	& \leq & \int_{U}\left|\rho_{n}\rho_{n_{x}}G_{\rho_{n-1}}\right|dx\nonumber \\
	& \leq & \left\Vert \rho_{n_{x}}\left(t\right)\right\Vert _{2} \left\Vert \rho_{n}\left(t\right)G_{\rho_{n-1}}\left(t\right)\right\Vert _{2} \nonumber \\
	& \leq & \frac{\sigma^{2}}{4}\left\Vert \rho_{n_{x}}\left(t\right)\right\Vert _{2}^{2}\nonumber \\
	&  & +\frac{1}{\sigma^2}\left\Vert \rho_{n}\left(t\right)\right\Vert _{2}^{2}\left\Vert G_{\rho_{n-1}}\left(t\right)\right\Vert _{\infty}^{2}.\label{eq:unif_l2_main1}
	\end{eqnarray}
	Using Proposition \ref{prop:unif_L1_est} and Corollary \ref{cor:positivity_n}
	we have 
	\begin{equation}\label{GrhoUB}
	\left\Vert G_{\rho_{n-1}}\left(t\right)\right\Vert _{\infty}\leq R\left\Vert \rho_{n-1}\left(t\right)\right\Vert _{1}=R,
	\end{equation}
	and hence (\ref{eq:unif_l2_main1}) becomes 
	\begin{equation}
	\frac{1}{2}\frac{d}{dt}\left\Vert \rho_{n}\left(t\right)\right\Vert _{2}^{2} + \frac{\sigma^2}{4} \left\Vert \rho_{n_{x}}\left(t\right)\right\Vert _{2}^{2} \leq \frac{R}{\sigma^2}\left\Vert \rho_{n}\left(t\right)\right\Vert _{2}^{2}, \label{eq:unif_l2_mainbelow}
	\end{equation}
	which implies, by integration, that
	\begin{equation}\label{rhonCub}
	\left\Vert \rho_{n}\left(t\right)\right\Vert _{2}^{2}\leq C\left(T\right)\left\Vert \rho_{0}\right\Vert _{2}^{2},
	\end{equation}
	for all $0\leq t\leq T$, and
	\begin{equation}
	\sup_{0\leq t\leq T}\left\Vert \rho_{n}\left(t\right)\right\Vert _{2}^{2} + {\int_{0}^{T} \left\Vert \rho_{n_x}\left(t\right)\right\Vert _{2}^{2} dt}
	\leq C\left(T\right)\left\Vert \rho_{0}\right\Vert _{2}^{2}.
	\end{equation}
	
\qquad\end{proof}

\begin{proposition}
	Let $T>0$ and suppose $\left\{ \rho_{n}:n\geq1\right\} $ satisfy
	(\ref{eq:sequence_evolution}) with $\rho_{0}\in C_{per}^{\infty}\left(U\right)$,
	$\rho_{0}\geq0$ and $\int_{U}\rho_{0}\left(x\right)dx=1$. Then,
	there exists a constant $C\left(T\right)>0$ such that 
	\begin{equation}
	{\left\Vert \rho_{n} \right\Vert}_{{L}^\infty\left(0,T;H^1_{per}\left(U\right)\right)} 
	+ {\left\Vert \rho_{n} \right\Vert}_{{L}^2\left(0,T;H^2_{per}\left(U\right)\right)} 
	\leq C\left(T\right) \left(\left\Vert \rho_{0}\right\Vert^2 _{H^1_{per}\left(U\right)} + \left\Vert \rho_{0}\right\Vert _{2}^4 \right)^{1/2}.
	\end{equation}
	\label{prop:unif_L2_est_higher}
\end{proposition}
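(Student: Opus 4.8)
The plan is to derive a higher-order energy estimate by testing (\ref{eq:sequence_evolution}) against $-\rho_{n_{xx}}$ and integrating over $U$. Using the periodicity of $\rho_n$ to discard all boundary terms, the viscous term produces the dissipation $\frac{\sigma^2}{2}\|\rho_{n_{xx}}(t)\|_2^2$, while $-\int_U \rho_{n_t}\rho_{n_{xx}}\,dx = \frac{1}{2}\frac{d}{dt}\|\rho_{n_x}(t)\|_2^2$ after one integration by parts, so that
\[
\frac{1}{2}\frac{d}{dt}\|\rho_{n_x}(t)\|_2^2 + \frac{\sigma^2}{2}\|\rho_{n_{xx}}(t)\|_2^2 = -\int_U (\rho_n G_{\rho_{n-1}})_x\,\rho_{n_{xx}}\,dx .
\]
Everything then reduces to bounding the right-hand side so that the top-order norm $\|\rho_{n_{xx}}\|_2$ can be partially absorbed into the dissipation, with the remainder controlled by $\|\rho_{n_x}\|_2^2$ plus a constant, whereupon Gr\"onwall's inequality closes the estimate.

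To this end I would expand $(\rho_n G_{\rho_{n-1}})_x = \rho_{n_x}G_{\rho_{n-1}} + \rho_n (G_{\rho_{n-1}})_x$ and estimate the two resulting integrals separately. The first is immediate from (\ref{GrhoUB}): since $\|G_{\rho_{n-1}}\|_\infty \leq R$, Cauchy--Schwarz gives $\left|\int_U \rho_{n_x}G_{\rho_{n-1}}\rho_{n_{xx}}\,dx\right| \leq R\|\rho_{n_x}\|_2\|\rho_{n_{xx}}\|_2$. For the factor $(G_{\rho_{n-1}})_x$ in the second integral I would use the Leibniz rule to compute it explicitly, $(G_{\rho_{n-1}})_x(x,t) = -R\rho_{n-1}(x+R,t) - R\rho_{n-1}(x-R,t) + \int_{x-R}^{x+R}\rho_{n-1}(y,t)\,dy$; combining the translation invariance of the $L^2$ norm under the periodic extension with the same Cauchy--Schwarz computation used for $\|G_\psi\|_2$ in (\ref{eq:est_2}) then yields $\|(G_{\rho_{n-1}})_x(t)\|_2 \leq 4R\|\rho_{n-1}(t)\|_2$. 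The crucial point is that this controls the derivative of $G$ by the \emph{mere} $L^2$ norm of $\rho_{n-1}$, which is already bounded uniformly in $n$ by Proposition \ref{prop:unif_L2_est}; writing $M := C(T)\|\rho_0\|_2$ for that bound, I may treat $\|\rho_{n-1}(t)\|_2$ and $\|\rho_n(t)\|_2$ as $\leq M$ throughout. Using $\|\rho_{n-1}\|_{H^1}$ here instead would force an induction on $n$ with potentially growing constants, so keeping $G$'s derivative at the $L^2$ level is what preserves uniformity.

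The remaining integral $\int_U \rho_n (G_{\rho_{n-1}})_x \rho_{n_{xx}}\,dx$ is where the main difficulty lies: a priori both $\rho_n$ and the shifted boundary terms in $(G_{\rho_{n-1}})_x$ are only $L^2$ in space, so their product need not be $L^2$. I would resolve this with the one-dimensional interpolation (Agmon / Gagliardo--Nirenberg) inequality $\|\rho_n\|_\infty \leq C\bigl(\|\rho_n\|_2 + \|\rho_n\|_2^{1/2}\|\rho_{n_x}\|_2^{1/2}\bigr)$ for periodic $H^1$ functions, bounding $\left|\int_U \rho_n (G_{\rho_{n-1}})_x\rho_{n_{xx}}\,dx\right| \leq 4RM\,\|\rho_n\|_\infty\|\rho_{n_{xx}}\|_2$ and then applying Young's inequality to each resulting monomial, peeling off $\epsilon\|\rho_{n_{xx}}\|_2^2$ terms (with $\epsilon$ small enough that, together with the first integral, they are absorbed into $\frac{\sigma^2}{2}\|\rho_{n_{xx}}\|_2^2$) while leaving terms at most linear in $\|\rho_{n_x}\|_2^2$ plus a constant proportional to $M^4$. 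This gives a differential inequality $\frac{d}{dt}\|\rho_{n_x}(t)\|_2^2 + c\,\|\rho_{n_{xx}}(t)\|_2^2 \leq C_2\|\rho_{n_x}(t)\|_2^2 + C_3$ with $c>0$ and $C_3 \propto \|\rho_0\|_2^4$, which accounts for the quartic term in the claimed bound. Gr\"onwall applied to $\|\rho_{n_x}(t)\|_2^2$, whose initial value is $\leq \|\rho_0\|_{H^1_{per}(U)}^2$, then yields the $L^\infty(0,T;H^1_{per})$ bound once combined with Proposition \ref{prop:unif_L2_est}; integrating the surviving dissipation $c\,\|\rho_{n_{xx}}\|_2^2$ over $[0,T]$ gives the $L^2(0,T;H^2_{per})$ bound. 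Since all constants depend only on $R,\ell,\sigma,T$ and not on $n$, the estimate is uniform in $n$, as is needed for the later passage to the limit.
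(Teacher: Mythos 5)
Your proposal follows the paper's proof in all essentials: you test the $n$-th equation against $-\rho_{n_{xx}}$, split $(\rho_n G_{\rho_{n-1}})_x$ into $\rho_{n_x}G_{\rho_{n-1}}+\rho_n(G_{\rho_{n-1}})_x$, dispose of the first piece via $\Vert G_{\rho_{n-1}}\Vert_\infty\le R$, and---exactly as the paper does---use the explicit formula for $(G_v)_x$ to get $\Vert (G_{\rho_{n-1}})_x\Vert_2\le C\Vert\rho_{n-1}\Vert_2$, so that only the $L^2$ bound of Proposition \ref{prop:unif_L2_est} is needed on the $(n-1)$-st iterate; your remark that this is what keeps the estimate uniform in $n$ is precisely the point. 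The only divergence is the closure. The paper bounds $\Vert\rho_n(G_{\rho_{n-1}})_x\Vert_2\le\Vert\rho_n\Vert_\infty\Vert(G_{\rho_{n-1}})_x\Vert_2\le C(T)\Vert\rho_0\Vert_2\Vert\rho_n\Vert_{H^1_{per}(U)}$ by Morrey, reaches
\begin{equation}
\frac{1}{2}\frac{d}{dt}\Vert\rho_{n_x}\Vert_2^2+\frac{\sigma^2}{4}\Vert\rho_{n_{xx}}\Vert_2^2\le C_1\Vert\rho_{n_x}\Vert_2^2+C_2(T)\Vert\rho_0\Vert_2^2\Vert\rho_n\Vert_{H^1_{per}(U)}^2,\nonumber
\end{equation}
and then simply integrates in time, absorbing \emph{both} right-hand terms with the bound $\Vert\rho_n\Vert_{L^2(0,T;H^1_{per}(U))}^2\le C(T)\Vert\rho_0\Vert_2^2$ from Proposition \ref{prop:unif_L2_est}; no Gr\"{o}nwall argument is needed and $C(T)$ stays independent of $\rho_0$, yielding exactly the stated $(\Vert\rho_0\Vert_{H^1_{per}(U)}^2+\Vert\rho_0\Vert_2^4)^{1/2}$. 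Your route via Agmon interpolation, Young and Gr\"{o}nwall also closes and gives a bound uniform in $n$, which is all the later arguments require; but note that the cross term $M^{3/2}\Vert\rho_{n_x}\Vert_2^{1/2}\Vert\rho_{n_{xx}}\Vert_2$ cannot be split so that the coefficient of $\Vert\rho_{n_x}\Vert_2^2$ is universal \emph{and} the leftover constant is only $O(M^4)$: you end up with either an $M^6$ constant or a Gr\"{o}nwall rate proportional to $M^2$, hence a factor $e^{CM^2T}$ depending on $\Vert\rho_0\Vert_2$. So your argument proves the proposition up to a slightly weaker dependence on $\Vert\rho_0\Vert_2$; the paper's direct time-integration is the cleaner way to obtain the estimate in the exact form stated.
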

\begin{proof}
	Multiplying equation (\ref{eq:sequence_evolution}) by $-\rho_{n_{xx}}$ and integrating by parts over $U$, 
	it follows from Cauchy-Schwarz, Young's inequality, and~(\ref{GrhoUB}) that
	\begin{eqnarray}\label{eq:regularity_exp_1}
	& & \frac{1}{2}\frac{d}{dt}\left\Vert \rho_{n_{x}}\left(t\right)\right\Vert _{2}^{2}
	+\frac{\sigma^{2}}{2}\left\Vert \rho_{n_{xx}}\left(t\right)\right\Vert _{2}^{2} \nonumber \\
	& \leq &  \int_{U}\left|\left(\rho_{n}G_{\rho_{n-1}}\right)_{x}\rho_{n_{xx}}\right|dx \nonumber \\
	& \leq &  \frac{\sigma^{2}}{4}\left\Vert \rho_{n_{xx}}\left(t\right)\right\Vert _{2}^{2}
	+C_{1}\left\Vert G_{\rho_{n-1}}\rho_{n_{x}}\left(t\right)\right\Vert _{2}^{2}
	+C_{2}\left\Vert \left(G_{\rho_{n-1}}\right)_{x}\left(t\right)\rho_{n}\left(t\right)\right\Vert _{2}^{2} \nonumber \\
	& \leq &  \frac{\sigma^{2}}{4}\left\Vert \rho_{n_{xx}}\left(t\right)\right\Vert _{2}^{2}
	+C_{1}\left\Vert \rho_{n_{x}}\left(t\right)\right\Vert _{2}^{2}
	+C_{2}\left\Vert \left(G_{\rho_{n-1}}\right)_{x}\left(t\right)\rho_{n}\left(t\right)\right\Vert _{2}^{2}.
	\end{eqnarray}
	Now, 
	\begin{eqnarray}
	\left(G_{\rho_{n-1}}\right)_{x} & = & \frac{\partial}{\partial x}\left[\int_{x-R}^{x+R}\left(x-y\right)\rho_{n-1}\left(y,t\right)dy\right] \nonumber \\
	& = & -R\left(\rho_{n-1}\left(x+R,t\right)+\rho_{n-1}\left(x-R,t\right)\right) \nonumber \\
	& & +\int_{x-R}^{x+R}\rho_{n-1}\left(y,t\right)dy. \label{eq:G_diff_first} 
	\end{eqnarray}
	By~(\ref{rhonCub}) and Morrey's inequality,
	\begin{eqnarray}
	\left\Vert \left(G_{\rho_{n-1}}\right)_{x}\left(t\right)\rho_{n}\left(t\right)\right\Vert _{2}^{2}
	& \leq & \left\Vert \rho_{n}\left(t\right)\right\Vert _{\infty}^{2}\left\Vert \left(G_{\rho_{n-1}}\right)_{x}\left(t\right)\right\Vert _{2}^{2}\nonumber \\
	& \leq & C\left\Vert \rho_{n}\left(t\right)\right\Vert _{H_{per}^{1}\left(U\right)}^{2}\left\Vert \rho_{n-1}\left(t\right)\right\Vert _{2}^{2} \nonumber \\
	& \leq & C\left(T\right)\left\Vert \rho_{0}\right\Vert _{2}^{2}\left\Vert \rho_{n}\left(t\right)\right\Vert _{H_{per}^{1}\left(U\right)}^{2}.
	\end{eqnarray}
	It follows that~(\ref{eq:regularity_exp_1}) becomes
	\begin{eqnarray}
	\frac{1}{2}\frac{d}{dt}\left\Vert \rho_{n_{x}}\left(t\right)\right\Vert _{2}^{2}+\frac{\sigma^{2}}{4}\left\Vert \rho_{n_{xx}}\left(t\right)\right\Vert _{2}^{2} & \leq & C_{1}\left\Vert \rho_{n_{x}}\left(t\right)\right\Vert _{2}^{2}\nonumber \\
	&  & +C_{2}\left(T\right)\left\Vert \rho_{0}\right\Vert _{2}^{2}\left\Vert \rho_{n}\left(t\right)\right\Vert _{H_{per}^{1}\left(U\right)}^{2}.
	\end{eqnarray}
	Integrating over $t$, we have
	\begin{eqnarray}
	& &\frac{1}{2}\, \underset{0\leq t \leq T}{\mbox{sup}}
	\left\Vert \rho_{n_{x}}(t)\right\Vert _{2}^{2}+\frac{\sigma^2}{4}\left\Vert \rho_{n_{xx}}\right\Vert _{{L}^{2}\left(0,T;{L}_{per}^{2}\left(U\right)\right)}^{2}\, \nonumber \\
	&\leq&  \frac{1}{2}\left\Vert \rho_{0_{x}}\right\Vert _{2}^{2} 
	+C\left(T\right)\Bigl(\left\Vert \rho_{n}\right\Vert _{{L}^{2}\left(0,T;H_{per}^{1}\left(U\right)\right)}^{2}+\left\Vert \rho_{0}\right\Vert _{2}^{2}\left\Vert \rho_{n}\right\Vert _{{L}^{2}\left(0,T;H_{per}^{1}\left(U\right)\right)}^{2}\Bigr). \nonumber \\
	\end{eqnarray}
	Applying the estimates in Proposition \ref{prop:unif_L2_est}, we find that
	\begin{equation}
	\underset{0\leq t \leq T}{\mbox{sup}}
	\left\Vert \rho_{n_{x}}\left(t\right)\right\Vert _{2}^{2}+\left\Vert \rho_{n_{xx}}\right\Vert _{{L}^{2}\left(0,T;{L}_{per}^{2}\left(U\right)\right)}^{2} \ \\
	\leq  C\left(T\right)\left(\left\Vert \rho_{0}\right\Vert _{H_{per}^{1}\left(U\right)}^{2}+\left\Vert \rho_{0}\right\Vert _{2}^{4}\right).\label{eq:reg_1}
	\end{equation}
\qquad\end{proof}

With the uniform estimates above, we can now show that $\rho_{n}$ converges strongly to a limit. 
\begin{lemma}
	Let $T>0$ and suppose that $\left\{ \rho_{n}:n\geq1\right\} $ satisfies
	(\ref{eq:sequence_evolution}) with $\rho_{0}\in C_{per}^{\infty}\left(U\right)$,
	$\rho_{0}\geq0$ and $\int_{U}\rho_{0}\left(x\right)dx=1$. Then
	there exists $\rho\in{L}^{1}\left(0,T;{L}_{per}^{1}\left(U\right)\right)$
	such that $\rho_{n}\rightarrow\rho$ in ${L}^{1}\left(0,T;{L}_{per}^{1}\left(U\right)\right)$.
	\label{lem:Cauchy}\end{lemma}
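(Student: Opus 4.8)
The plan is to show that the iterates $\{\rho_n\}$ form a Cauchy sequence by controlling the successive differences $w_n := \rho_n - \rho_{n-1}$. Subtracting the equation (\ref{eq:sequence_evolution}) for index $n-1$ from that for index $n$, and using that every $\rho_n$ shares the same initial datum $\rho_0$, we see that $w_n$ solves a linear parabolic problem with \emph{zero} initial condition, periodic boundary conditions, and right-hand side $\left(\rho_n G_{\rho_{n-1}} - \rho_{n-1} G_{\rho_{n-2}}\right)_x$. The key observation is that, since $\rho\mapsto G_\rho$ is linear in its subscript (see (\ref{eq:G_definition})), this source splits as
\[
\rho_n G_{\rho_{n-1}} - \rho_{n-1} G_{\rho_{n-2}} = w_n\, G_{\rho_{n-1}} + \rho_{n-1}\, G_{w_{n-1}},
\]
so that $w_n$ is coupled only to the previous difference $w_{n-1}$. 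This is what allows the recursion to close.

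Next I would run an $L^2$ energy estimate exactly as in Section~\ref{sec:apriori} and Proposition~\ref{prop:unif_L2_est}: multiply the equation for $w_n$ by $w_n$, integrate by parts over $U$, and bound the two resulting terms. For the first term I would use the uniform bound $\left\Vert G_{\rho_{n-1}}(t)\right\Vert_\infty \le R$ from (\ref{GrhoUB}); for the second, the pointwise estimate $\left\Vert G_{w_{n-1}}(t)\right\Vert_\infty \le R\left\Vert w_{n-1}(t)\right\Vert_1 \le C\left\Vert w_{n-1}(t)\right\Vert_2$ (the linear analogue of (\ref{eq:G_infty_est}), using that $U$ is bounded) together with the uniform bound $\left\Vert\rho_{n-1}(t)\right\Vert_2 \le C(T)\left\Vert\rho_0\right\Vert_2$ from Proposition~\ref{prop:unif_L2_est}. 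After applying Cauchy--Schwarz and Young's inequality to absorb the factor $\left\Vert w_{n_x}(t)\right\Vert_2$ into the dissipation term $\tfrac{\sigma^2}{2}\left\Vert w_{n_x}(t)\right\Vert_2^2$, and then discarding the remaining nonnegative gradient term, I obtain a differential inequality of the form
\[
\frac{d}{dt}\left\Vert w_n(t)\right\Vert_2^2 \le C_1 \left\Vert w_n(t)\right\Vert_2^2 + C_2(T)\left\Vert w_{n-1}(t)\right\Vert_2^2 .
\]
Since $w_n(0)=0$, Gr\"onwall's inequality turns this into the integral recursion $\left\Vert w_n(t)\right\Vert_2^2 \le M(T)\int_0^t \left\Vert w_{n-1}(s)\right\Vert_2^2\,ds$ for all $0\le t\le T$, with $M(T)$ independent of $n$.

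Finally I would iterate this recursion. Because $\rho_1$ and $\rho_0$ both lie in $L^\infty(0,T;L^2_{per}(U))$ by Proposition~\ref{prop:unif_L2_est}, the quantity $a_1 := \sup_{[0,T]}\left\Vert w_1(t)\right\Vert_2^2$ is finite, and a straightforward induction gives the factorial decay
\[
\sup_{0\le t\le T}\left\Vert w_n(t)\right\Vert_2^2 \le a_1\,\frac{\bigl(M(T)\,T\bigr)^{n-1}}{(n-1)!}.
\]
Hence $\sum_n \sup_{[0,T]}\left\Vert w_n(t)\right\Vert_2 < \infty$, so $\{\rho_n\}$ is Cauchy in $L^\infty(0,T;L^2_{per}(U))$ and converges to a limit $\rho$ in that space; since $U\times(0,T)$ has finite measure, this convergence descends to $L^1(0,T;L^1_{per}(U))$, which proves the lemma. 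The main thing to get right is the bilinear splitting above---ensuring that the coupling term feeds back only the \emph{previous} difference $w_{n-1}$ and that all constants are uniform in $n$ (which is guaranteed by the $n$-independent a priori bounds of Propositions~\ref{prop:unif_L1_est}--\ref{prop:unif_L2_est}). Everything else is a routine energy estimate.
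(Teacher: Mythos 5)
Your argument is correct, and it reaches the conclusion by a genuinely different route from the paper. Both proofs start from the same bilinear splitting $\rho_n G_{\rho_{n-1}}-\rho_{n-1}G_{\rho_{n-2}}=w_n G_{\rho_{n-1}}+\rho_{n-1}G_{w_{n-1}}$, but from there the paper stays in $L^1$: it pairs the difference equation with $\chi'_\epsilon(\phi_n)$ (the regularized absolute value already built for Lemma~\ref{lem:L1}), obtains $\frac{d}{dt}\Vert\phi_n(t)\Vert_1\leq C\Vert\phi_{n-1}(t)\Vert_1$, and then closes the argument not by iterating this recursion but by summing the differences into $y_N(t)=\sum_{n=2}^N\Vert\phi_n(t)\Vert_1$, applying Gr\"onwall to $y_N$, and invoking monotone convergence. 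You instead run a standard $L^2$ energy estimate (multiply by $w_n$, integrate by parts) and iterate the resulting integral recursion to get the classical Picard-type factorial decay $\sup_t\Vert w_n(t)\Vert_2^2\leq a_1 (M(T)T)^{n-1}/(n-1)!$. Your route buys two things: (a) a stronger conclusion, namely Cauchy in $L^\infty(0,T;L^2_{per}(U))$, which descends to the required $L^1(0,T;L^1_{per}(U))$ convergence by Cauchy--Schwarz on the bounded domain (and would in fact also suffice for the passage to the limit in~(\ref{eq:strongconv})); and (b) lighter prerequisites --- because you integrate by parts to move the derivative onto $w_n$, the term $\rho_{n-1}G_{w_{n-1}}$ is estimated using only $\Vert\rho_{n-1}\Vert_2$ and Proposition~\ref{prop:unif_L2_est}, whereas the paper's $L^1$ pairing keeps the derivative on $(\rho_{n-1}G_{\phi_{n-1}})_x$ and therefore needs the uniform $H^1_{per}$ bound of Proposition~\ref{prop:unif_L2_est_higher}. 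The paper's approach, in turn, reuses the $\chi_\epsilon$ machinery and works entirely at the $L^1$ level, which is the natural norm for densities. All the individual estimates you cite ($\Vert G_{\rho_{n-1}}\Vert_\infty\leq R$, $\Vert G_{w_{n-1}}\Vert_\infty\leq R\Vert w_{n-1}\Vert_1\leq R\sqrt{2\ell}\,\Vert w_{n-1}\Vert_2$, the vanishing initial datum $w_n(0)=0$, and the $n$-uniformity of the constants) are available, so the proof is complete as written.
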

\begin{proof}
	We set $\phi_{n}=\rho_{n}-\rho_{n-1}$ for $n\geq 1$. For $n\geq 2$, the evolution equation for $\phi_{n}$ reads
	\begin{equation}
	\phi_{n_{t}}-\frac{\sigma^{2}}{2}\phi_{n_{xx}}=\left(\phi_{n}G_{\rho_{n-1}}+\rho_{n-1}G_{\phi_{n-1}}\right)_{x}.
	\end{equation}
	Let $\epsilon>0$. Multiplying the equation above by $\chi^{\prime}_{\epsilon}\left(\phi_{n}\right)$ (see definition~(\ref{eq:chi_def})) and integrating by parts yields
	\begin{eqnarray}
	& & \frac{d}{dt} \int_U \chi_{\epsilon}\left(\phi_{n}\right) dx
	+ \frac{\sigma^{2}}{2}
	\left\Vert \left[\chi^{\prime\prime}_{\epsilon}\left(\phi_n\left(t\right)\right)\right]^{1/2}\phi_{n_{x}}\left(t\right)\right\Vert _{2}^{2}\nonumber \\
	& \leq &\int_{U} \left\vert\chi^{\prime\prime}_{\epsilon}\left(\phi_n\right) \phi_{n_{x}}\phi_{n} G_{\rho_{n-1}}\right\vert dx
	+\int_{U}\left\vert\chi^{\prime}_{\epsilon}\left(\phi_n\right) \left(\rho_{n-1}G_{\phi_{n-1}}\right)_x\right\vert dx\nonumber \\
	& \leq & \frac{\sigma^{2}}{2} \left\Vert \left[\chi^{\prime\prime}_{\epsilon}\left(\phi_n\left(t\right)\right)\right]^{1/2}\phi_{n_{x}}\left(t\right)\right\Vert _{2}^{2} \nonumber \\
	&  & + \frac{1}{2\sigma^2} \left\Vert G_{\rho_{n-1}}\left(t\right)\right\Vert^{2}_{\infty}
	\left\Vert \left[\chi^{\prime\prime}_{\epsilon}\left(\phi_n\left(t\right)\right)\right]^{1/2} \phi_{n}\left(t\right)\right\Vert^2_{2} \nonumber \\
	&  & + \int_{U}\left\vert\chi^{\prime}_{\epsilon}\left(\phi_n\right) \left(\rho_{n-1}G_{\phi_{n-1}}\right)_x\right\vert dx. 
	\label{eq:phi_n_main1}
	\end{eqnarray}
	By Corollary~\ref{cor:positivity_n},
	$\left\Vert G_{\rho_{n-1}}\left(t\right)\right\Vert _{\infty}\leq R\left\Vert \rho_{n-1}\left(t\right)\right\Vert _{1}=R$. Also, 
	as in~(\ref{eq:second_one}) from the proof of Lemma~\ref{lem:L1},
	we have 
	\begin{equation}
	\left\Vert \left[\chi^{\prime\prime}_{\epsilon}\left(\phi_n\left(t\right)\right)\right]^{1/2}\phi_{n}\left(t\right)\right\Vert _{2}^{2}
	\leq C \epsilon.
	\end{equation} 
	To estimate the last integral in~(\ref{eq:phi_n_main1}), observe that $\left\vert\chi^{\prime}_{\epsilon}\right\vert \leq C$, hence
	\begin{eqnarray}
	\int_{U}\left\vert\chi^{\prime}_{\epsilon}\left(\phi_n\right) \left(\rho_{n-1} G_{\phi_{n-1}}\right)_x\right\vert dx 
	& \leq & C \int_{U}\left\vert \left(\rho_{n-1}\right)_x G_{\phi_{n-1}}\right\vert dx \nonumber \\
	&  & + C \int_{U}\left\vert \rho_{n-1} \left(G_{\phi_{n-1}}\right)_x \right\vert dx \nonumber \\
	& \leq & C_1 \left\Vert G_{\phi_{n-1}} \right\Vert_{\infty} \left\Vert \rho_{n-1} \right\Vert_{H^1_{per}\left(U\right)} \nonumber \\
	& & +C_2 \left\Vert \rho_{n-1} \right\Vert_{\infty} \left\Vert \left(G_{\phi_{n-1}}\right)_x \right\Vert_{1}.
	\end{eqnarray}
	But we know that $\left\Vert G_{\phi_{n-1}} \right\Vert_{\infty} \leq R\left\Vert \phi_{n-1} \right\Vert_{1}$ and that $\left\Vert \left(G_{\phi_{n-1}}\right)_x \right\Vert_{1} \leq C\left\Vert \phi_{n-1} \right\Vert_{1}$ (see expression~(\ref{eq:G_diff_first})). 
	Moreover, Morrey's inequality implies $\left\Vert \rho_{n-1} \right\Vert_{\infty} \leq \left\Vert \rho_{n-1} \right\Vert_{H^1_{per}\left(U\right)}$. 
	Hence, it follows that, as $\epsilon$ tends to $0$,  (\ref{eq:phi_n_main1}) becomes 
	\begin{eqnarray}
	\frac{d}{dt} \left\Vert \phi_{n}\left(t\right) \right\Vert_{1} 
	& \leq & C \left\Vert \rho_{n-1} \left(t\right) \right\Vert_{H^1_{per}\left(U\right)} \left\Vert \phi_{n-1} \left(t\right) \right\Vert_{1} \nonumber \\
	& \leq & C\left(\rho_0 ;1,T\right) \left\Vert \phi_{n-1} \left(t\right) \right\Vert_{1},\label{eq:phi_n_main2}
	\end{eqnarray}
	where in the last line we used Proposition~\ref{prop:unif_L2_est_higher} and the shorthand 
	\begin{equation}
	C\left(\rho_0 ;1,T\right):=C \left(T\right)\left(\left\Vert \rho_{0}\right\Vert _{H_{per}^{1}\left(U\right)}^{2}+\left\Vert \rho_{0}\right\Vert _{2}^{4}\right)^{1/2}.
	\end{equation}
	Now, for $N\geq 2$ we define 
	\begin{equation}
	y_{N}\left(t\right):=\sum_{n=2}^{N}\left\Vert \phi_{n}\left(t\right)\right\Vert _{1}.
	\end{equation}
	By~(\ref{eq:phi_n_main2}) and Corollary~\ref{cor:positivity_n},
	\begin{eqnarray}
	\frac{d}{dt}y_{N}\left(t\right) 
	& \leq & C\left(\rho_0 ;1,T\right) \left(y_N\left(t\right) + \left\Vert \phi_{1}\left(t\right)\right\Vert _{1} - \left\Vert \phi_{N}\left(t\right)\right\Vert _{1} \right) \nonumber \\
	& \leq & C\left(\rho_0 ;1,T\right) \left(y_N\left(t\right) + 4 \right). 
	\end{eqnarray}
	Moreover, the $\rho_{n}$'s coincide at $t=0$, so $y_{N}\left(0\right)=0$.
	Thus by Gr\"{o}nwall's inequality,
	\begin{equation}
	y_{N}\left(t\right) \leq 2T C\left(\rho_0 ;1,T\right) e^{T C\left(\rho_0 ;1,T\right)},
	\end{equation}
	uniformly in $N$ and $t$. Furthermore, for each $t$, $y_{N}\left(t\right)$
	is a bounded monotone sequence in $N$, hence there exists 
	\begin{equation}
	y_{\infty}\left(t\right)=\sum_{n=2}^{\infty}\left\Vert \phi_{n}\left(t\right)\right\Vert _{1}\leq 2T C\left(\rho_0 ;1,T\right) e^{T C\left(\rho_0 ;1,T\right)},
	\end{equation}
	such that $y_{N}\left(t\right)\uparrow y_{\infty}\left(t\right)$,
	pointwise in $t$. By the monotone convergence theorem, 
	\begin{equation}
	\int_{0}^{T}y_{N}\left(t\right)dt\uparrow\int_{0}^{T}y_{\infty}\left(t\right)dt\leq 2T^2 C\left(\rho_0 ;1,T\right) e^{T C\left(\rho_0 ;1,T\right)}.\label{eq:yN_convergence}
	\end{equation}
	This result immediately implies that $\left\{ \rho_{n}\right\} $
	is a Cauchy sequence in ${L}^{1}\left(0,T;{L}_{per}^{1}\left(U\right)\right)$.
	Indeed, for $\epsilon>0$ we can pick $N\geq 2$ such that $\int_{0}^{T}y_{\infty}\left(t\right)dt-\int_{0}^{T}y_{N}\left(t\right)dt<\epsilon$. Hence, for all $M\geq 1$, 
	\begin{eqnarray}
	\left\Vert \rho_{N+M}-\rho_{N}\right\Vert _{{L}^{1}\left(0,T;{L}_{per}^{1}\left(U\right)\right)} 
	& = & \int_{0}^{T}\left\Vert \rho_{N+M}\left(t\right)-\rho_{N}\left(t\right)\right\Vert _{1} dt\nonumber \\
	& = & \int_{0}^{T}\left\Vert \sum_{n=N+1}^{N+M} \phi_{n}\left(t\right)\right\Vert _{1} dt \nonumber \\
	& \leq & \int_{0}^{T}\sum_{n=N+1}^{N+M} \left\Vert \phi_{n}\left(t\right)\right\Vert _{1} dt \nonumber \\
	& = & \int_{0}^{T}y_{N+M}\left(t\right)dt-\int_{0}^{T}y_{N}\left(t\right) dt \nonumber \\
	& \leq & \int_{0}^{T}y_{\infty}\left(t\right)dt-\int_{0}^{T}y_{N}\left(t\right) dt \nonumber \\
	& \leq & \epsilon.
	\end{eqnarray}
	Therefore, $\left\{ \rho_{n}\right\} $ is a Cauchy sequence and there
	exists $\rho\in{L}^{1}\left(0,T;{L}_{per}^{1}\left(U\right)\right)$
	such that $\rho_{n}\rightarrow\rho$ in ${L}^{1}\left(0,T;{L}_{per}^{1}\left(U\right)\right)$. 
\qquad\end{proof}

Note that we can extract from $\left\{ \rho_{n}\right\} $ a subsequence
that converges weakly in smaller spaces. 
\begin{definition}
	We denote by $H_{per}^{-1}\left(U\right)$ the dual space of $H_{per}^{1}\left(U\right)$. 
\end{definition}
Since periodic boundary conditions allows integration by parts without
extra terms, most characterizations of $H^{-1}=\left(H_{0}^{1}\right)^{*}$
carries over to $H_{per}^{-1}$. 
\begin{lemma}
	We have $\rho\in{L}^{2}\left(0,T;H_{per}^{1}\left(U\right)\right)\cap{L}^{\infty}\left(0,T;{L}_{per}^{2}\left(U\right)\right)$, with $\rho_{t}\in{L}^{2}\left(0,T;H_{per}^{-1}\left(U\right)\right)$, and the estimate
	\begin{equation}
	\left\Vert \rho\right\Vert _{{L}^{\infty}\left(0,T;{L}_{per}^{2}\left(U\right)\right)}+\left\Vert \rho\right\Vert _{{L}^{2}\left(0,T;H_{per}^{1}\left(U\right)\right)}+\left\Vert \rho_{t}\right\Vert _{{L}^{2}\left(0,T;H_{per}^{-1}\left(U\right)\right)}\leq C\left(T\right)\left\Vert \rho_{0}\right\Vert _{2}.
	\end{equation}
	Moreover, there exists a subsequence $\left\{ \rho_{n_{k}}:k\geq1\right\}$ such that 
	\[
	\rho_{n_{k}}\rightharpoonup\rho \text{ in } {L}^{2}\left(0,T;H_{per}^{1}\right),
	\]
	and 	
	\[
	\rho_{n_{k_{t}}}\rightharpoonup\rho_{t} \text{ in } {L}^{2}\left(0,T;H_{per}^{-1}\right).
	\]
	\label{lem:weakconv}\end{lemma}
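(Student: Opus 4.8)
The plan is to supplement the uniform bounds of Proposition~\ref{prop:unif_L2_est} with a matching bound on the time derivative, then extract weakly convergent subsequences and identify their limits with the strong $L^1$ limit $\rho$ furnished by Lemma~\ref{lem:Cauchy}. First I would establish a uniform estimate on $\rho_{n_t}$ in ${L}^2(0,T;H^{-1}_{per}(U))$. Testing the equation in~(\ref{eq:sequence_evolution}) against an arbitrary $v\in H^1_{per}(U)$ and integrating by parts (periodicity eliminates all boundary terms) gives
\begin{equation*}
\langle \rho_{n_t}, v\rangle = -\frac{\sigma^2}{2}\int_U \rho_{n_x} v_x\, dx - \int_U \rho_n G_{\rho_{n-1}} v_x\, dx.
\end{equation*}
Using~(\ref{GrhoUB}) to bound $\|G_{\rho_{n-1}}\|_\infty\le R$ together with Cauchy--Schwarz, the right-hand side is at most $\left(\frac{\sigma^2}{2}\|\rho_{n_x}\|_2+R\|\rho_n\|_2\right)\|v\|_{H^1_{per}(U)}$, whence $\|\rho_{n_t}(t)\|_{H^{-1}_{per}(U)}\le \frac{\sigma^2}{2}\|\rho_{n_x}(t)\|_2+R\|\rho_n(t)\|_2$. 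Squaring and integrating in $t$, Proposition~\ref{prop:unif_L2_est} then yields $\|\rho_{n_t}\|_{{L}^2(0,T;H^{-1}_{per}(U))}\le C(T)\|\rho_0\|_2$.

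Next, since ${L}^2(0,T;H^1_{per}(U))$ and ${L}^2(0,T;H^{-1}_{per}(U))$ are Hilbert spaces and ${L}^\infty(0,T;{L}^2_{per}(U))=\bigl({L}^1(0,T;{L}^2_{per}(U))\bigr)^*$, the uniform bounds of Proposition~\ref{prop:unif_L2_est} and the estimate just derived allow me, by the Banach--Alaoglu theorem and a diagonal argument, to extract a single subsequence $\{\rho_{n_k}\}$ along which $\rho_{n_k}\rightharpoonup\tilde\rho$ in ${L}^2(0,T;H^1_{per})$, $\rho_{n_k}\overset{*}{\rightharpoonup}\tilde\rho$ in ${L}^\infty(0,T;{L}^2_{per})$, and $\rho_{n_{k_t}}\rightharpoonup\xi$ in ${L}^2(0,T;H^{-1}_{per})$ for some limits $\tilde\rho$ and $\xi$.

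The crux is to identify these weak limits, and this is where I expect the main difficulty. Each of the three convergences above implies convergence in the sense of distributions on $U_T$; on the other hand, Lemma~\ref{lem:Cauchy} gives $\rho_n\to\rho$ strongly in ${L}^1(0,T;{L}^1_{per}(U))$, which likewise implies distributional convergence. By uniqueness of distributional limits, $\tilde\rho=\rho$, so $\rho$ indeed belongs to all three spaces. To see $\xi=\rho_t$, I would pair against $\phi(t)v$ with $\phi\in C_c^\infty(0,T)$ and $v\in H^1_{per}$: integrating by parts in time and passing to the weak limit (the functional $u\mapsto\int_0^T\langle u(t),v\rangle\phi(t)\,dt$ being bounded on the relevant $L^2$-in-time space) shows $\int_0^T\langle\xi,v\rangle\phi\,dt=-\int_0^T\langle\rho,v\rangle\phi'\,dt$, so $\xi=\rho_t$ as distributions.

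Finally, the quantitative estimate follows from the weak and weak-$*$ lower semicontinuity of the respective norms, which combine with the uniform bounds of Proposition~\ref{prop:unif_L2_est} and the time-derivative bound above to give each of the three summands a majorant $C(T)\|\rho_0\|_2$. The only genuinely delicate step is the limit identification, which rests on reconciling the strong ${L}^1$ convergence of Lemma~\ref{lem:Cauchy} with the weak ${L}^2$/${L}^\infty$ convergences; routing everything through the distributional topology resolves this cleanly, since both modes of convergence produce the same distributional limit.
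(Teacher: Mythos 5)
Your proposal is correct and follows essentially the same route as the paper: a uniform ${L}^{2}(0,T;H_{per}^{-1})$ bound on $\rho_{n_t}$ obtained by testing the equation in divergence form against $H^1_{per}$ functions and invoking $\Vert G_{\rho_{n-1}}\Vert_\infty\le R$ together with Proposition~\ref{prop:unif_L2_est}, followed by weak compactness and lower semicontinuity of the norms. The only difference is that you spell out the identification of the weak limits with the strong ${L}^1$ limit $\rho$ of Lemma~\ref{lem:Cauchy} via uniqueness of distributional limits, a step the paper leaves implicit; this is a welcome clarification rather than a deviation.
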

\begin{proof}
	From Proposition~\ref{prop:unif_L2_est}, we have 
	\begin{equation}
	\left\Vert \rho_n\right\Vert _{{L}^{\infty}\left(0,T;{L}_{per}^{2}\left(U\right)\right)}+\left\Vert \rho_n\right\Vert _{{L}^{2}\left(0,T;H_{per}^{1}\left(U\right)\right)} \leq C\left(T\right)\left\Vert \rho_{0}\right\Vert _{2}.
	\end{equation}
	Next, observe that from the evolution equation of $\rho_{n}$, we have
	\begin{equation}
	\rho_{n_{t}}=\left(G_{\rho_{n-1}}\rho_{n}+\frac{\sigma^{2}}{2}\rho_{n_{x}}\right)_{x}.
	\end{equation}
	Hence, 
	\begin{eqnarray}
	\left\Vert \rho_{n_{t}}\right\Vert _{{L}^{2}\left(0,T;H_{per}^{-1}\left(U\right)\right)}^{2} & \leq\int_{0}^{T} & \left\Vert G_{\rho_{n-1}}\left(t\right)\rho_{n}\left(t\right)+\frac{\sigma^{2}}{2}\rho_{n_{x}}\left(t\right)\right\Vert _{2}^{2}dt\nonumber \\
	& \leq & 2\int_{0}^{T}\left\Vert G_{\rho_{n-1}}\left(t\right)\right\Vert _{\infty}^{2}\left\Vert \rho_{n}\left(t\right)\right\Vert _{2}^{2}dt\nonumber \\
	&  & +\sigma^{2}\int_{0}^{T}\left\Vert \rho_{n_{x}}\left(t\right)\right\Vert _{2}^{2}dt\nonumber \\
	& \leq & 2R^2\int_{0}^{T}\left\Vert \rho_{n}\left(t\right)\right\Vert _{2}^{2}dt\nonumber \\
	&  & +\sigma^{2}\int_{0}^{T}\left\Vert \rho_{n_{x}}\left(t\right)\right\Vert _{2}^{2}dt\nonumber \\
	& \leq & C\left(T\right)\left\Vert \rho_{0}\right\Vert _{2}^{2}.
	\end{eqnarray}
	where in the last step we used Proposition~\ref{prop:unif_L2_est}. 
	Therefore, we have the uniform estimate 
	\begin{equation}
	\left\Vert \rho_{n}\right\Vert _{{L}^{\infty}\left(0,T;{L}_{per}^{2}\left(U\right)\right)}+\left\Vert \rho_{n}\right\Vert _{{L}^{2}\left(0,T;H_{per}^{1}\left(U\right)\right)}+\left\Vert \rho_{n_{t}}\right\Vert _{{L}^{2}\left(0,T;H_{per}^{-1}\left(U\right)\right)}\leq C\left(T\right)\left\Vert \rho_{0}\right\Vert _{2}.\label{eq:unif_3_ineq}
	\end{equation}
	Hence, $\rho\in{L}^{2}\left(0,T;H_{per}^{1}\left(U\right)\right)\cap{L}^{\infty}\left(0,T;{L}_{per}^{2}\left(U\right)\right)$, with
	$\rho_{t}\in{L}^{2}\left(0,T;H_{per}^{-1}\left(U\right)\right)$
	and they satisfy the same estimate (\ref{eq:unif_3_ineq}). Furthermore, there exists $\left\{ \rho_{n_{k}}:k\geq1\right\} $ such that 
	\begin{equation}
	\begin{cases}
	\rho_{n_{k}}\rightharpoonup\rho & \mbox{in }{L}^{2}\left(0,T;H_{per}^{1}\left(U\right)\right),\\
	\rho_{n_{k_{t}}}\rightharpoonup\rho_{t} & \mbox{in }{L}^{2}\left(0,T;H_{per}^{-1}\left(U\right)\right).
	\end{cases}
	\end{equation}
\qquad\end{proof}

Following \cite{Evans}, we can deduce from Lemma \ref{lem:weakconv} the following result: 
\begin{theorem}
	Suppose $\rho\in{L}^{2}\left(0,T;H_{per}^{1}\left(U\right)\right)$
	with $\rho_{t}\in{L}^{2}\left(0,T;H_{per}^{-1}\left(U\right)\right)$,
	then $\rho\in C\left(0,T;{L}_{per}^{2}\left(U\right)\right)$
	up to a set of measure zero. Further, the mapping
	\begin{equation}
	t\mapsto\left\Vert \rho\left(t\right)\right\Vert _{2}^{2}
	\end{equation}
	is absolutely continuous, with 
	\begin{equation}
	\frac{d}{dt}\left\Vert \rho\left(t\right)\right\Vert _{2}^{2}=2\left\langle \rho_{t}\left(t\right),\rho\left(t\right)\right\rangle ,
	\end{equation}
	for a.e. $0\leq t\leq T$. Here, $\left\langle ,\right\rangle $ denotes
	the pairing between $H_{per}^{-1}$ and $H_{per}^{1}$. \label{thm:Evans}\end{theorem}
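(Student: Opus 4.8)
The plan is to follow the classical argument for Gelfand-triple evolution pairs (Evans, Section 5.9). The governing structure is the continuous chain $H^1_{per}(U) \hookrightarrow L^2_{per}(U) \cong (L^2_{per}(U))^* \hookrightarrow H^{-1}_{per}(U)$, in which we identify $L^2_{per}$ with its own dual. The periodicity (already invoked when $H^{-1}_{per}$ was defined) ensures that integration by parts introduces no boundary contributions, so the duality pairing $\langle \cdot,\cdot\rangle$ between $H^{-1}_{per}$ and $H^1_{per}$ reduces to the ordinary $L^2_{per}$ inner product whenever its first argument happens to lie in $L^2_{per}$. This identification is what makes the final product-rule formula meaningful.

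First I would regularize $\rho$ in the time variable. Choosing a standard mollifier $\eta_\epsilon$ on $\mathbb{R}$ and extending $\rho$ slightly beyond $[0,T]$ (by reflection, or by first working on compact subintervals of $(0,T)$), I set $\rho^\epsilon = \eta_\epsilon * \rho$. Then $\rho^\epsilon$ is smooth as a map into $H^1_{per}$, with $(\rho^\epsilon)_t = \eta_\epsilon * \rho_t$, and the standard mollifier estimates give $\rho^\epsilon \to \rho$ in $L^2(0,T;H^1_{per})$ together with $(\rho^\epsilon)_t \to \rho_t$ in $L^2(0,T;H^{-1}_{per})$ as $\epsilon \to 0$.

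The heart of the matter is the elementary product rule for the smooth approximants: for a function that is smooth in $t$ and valued in $H^1_{per}$, one checks directly that $\frac{d}{dt}\|\rho^\epsilon(t)\|_2^2 = 2\langle (\rho^\epsilon)_t(t), \rho^\epsilon(t)\rangle$, and likewise for differences $\rho^\epsilon - \rho^\delta$. Integrating this identity for the difference between two times and then averaging over one of them, the two convergences of the previous step force $\{\rho^\epsilon\}$ to be Cauchy in $C([a,b];L^2_{per})$ on every compact $[a,b]\subset(0,T)$. Its uniform limit is a continuous representative of $\rho$, which extends to all of $[0,T]$, yielding $\rho \in C(0,T;L^2_{per})$ up to a null set.

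Finally, passing to the limit $\epsilon\to 0$ in the integrated identity $\|\rho^\epsilon(t)\|_2^2 - \|\rho^\epsilon(s)\|_2^2 = 2\int_s^t \langle (\rho^\epsilon)_\tau, \rho^\epsilon\rangle \, d\tau$ gives $\|\rho(t)\|_2^2 - \|\rho(s)\|_2^2 = 2\int_s^t \langle \rho_\tau, \rho\rangle \, d\tau$ for the continuous representative. The right-hand side is absolutely continuous in $t$, since its integrand lies in $L^1(0,T)$ by Cauchy-Schwarz (using $\rho_t \in L^2(0,T;H^{-1}_{per})$ and $\rho \in L^2(0,T;H^1_{per})$); hence $t \mapsto \|\rho(t)\|_2^2$ is absolutely continuous with derivative $2\langle \rho_t(t), \rho(t)\rangle$ for a.e. $t$. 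I expect the only genuinely delicate point to be the endpoint behavior of the mollification: the clean identity holds on interior subintervals, and some care is needed to promote both the continuity and the energy identity up to $t=0$ and $t=T$. This is handled exactly as in Evans, by combining the uniform Cauchy estimate with the well-defined endpoint values supplied by the integrated formula.
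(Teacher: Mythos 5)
Your proposal is correct and follows essentially the same route as the paper: the paper simply cites Evans, Section 5.9, Theorem 3, noting that the only change is replacing $H_0^1$ and $H^{-1}$ by their periodic counterparts, and your argument is precisely that standard time-mollification proof written out in full, including the periodic identification of the duality pairing. No discrepancy to report.
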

\begin{proof}
	The proof is identical to the proof in Evans \cite{Evans} Section
	5.9 Theorem 3. The only difference here is that we are considering
	$H_{per}^{1}$ and $H_{per}^{-1}$, instead of $H_{0}^{1}$ and $H^{-1}$.
	Since periodic conditions still guarantees integration by parts without
	extra terms, all proofs follow through. 
\qquad\end{proof}

Now, we are ready to prove the existence of a weak solution to equation~(\ref{eq:evo_equation}). 
\begin{definition}
	We say that $\rho\in{L}^{2}\left(0,T;H_{per}^{1}\left(U\right)\right)\cap{L}^{\infty}\left(0,T;{L}_{per}^{2}\left(U\right)\right)$
	with $\rho_{t}\in{L}^{2}\left(0,T;H_{per}^{-1}\left(U\right)\right)$
	is a \emph{weak solution} of equation (\ref{eq:evo_equation}) if
	for every $\eta\in{L}^{2}\left(0,T;H_{per}^{1}\left(U\right)\right)$,
	\begin{equation}
	\int_{0}^{T}\left\langle \rho_{t}\left(t\right),\eta\left(t\right)\right\rangle dt+\int_{0}^{T}\int_{U}\left(\frac{\sigma^{2}}{2}\rho_{x}\eta_{x}+\rho G_{\rho}\eta_{x}\right)dxdt=0,
	\end{equation}
	and $\rho\left(0\right)=\rho_{0}$. Note that since $\rho\in C\left(0,T;{L}_{per}^{2}\left(U\right)\right)$
	(Theorem \ref{thm:Evans}), the last condition makes sense as an initial condition. \end{definition}

\begin{theorem}
	\label{thm:Existnece}(Existence and uniqueness) Let $\rho_{0}\in C_{per}^{\infty}\left(U\right)$,
	$\rho_{0}\geq0$ and $\int_{U}\rho_{0}\left(x\right)dx=1$. Then,
	there exists a unique weak solution $\rho\in {{L}^{\infty}\left(0,T;{L}_{per}^{2}\left(U\right)\right)}\cap {{L}^{2}\left(0,T;H_{per}^{1}\left(U\right)\right)}$, with $\rho_t \in {{L}^{2}\left(0,T;H_{per}^{-1}\left(U\right)\right)}$, to equation (\ref{eq:evo_equation})
	with the estimate 
	\[
	\left\Vert \rho\right\Vert _{{L}^{\infty}\left(0,T;{L}_{per}^{2}\left(U\right)\right)}+\left\Vert \rho\right\Vert _{{L}^{2}\left(0,T;H_{per}^{1}\left(U\right)\right)}+\left\Vert \rho_{_{t}}\right\Vert _{{L}^{2}\left(0,T;H_{per}^{-1}\left(U\right)\right)}\leq C\left(T\right)\left\Vert \rho_{0}\right\Vert _{2}.
	\]
\end{theorem}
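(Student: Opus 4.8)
The plan is to realize the weak solution as the limit of the approximating sequence $\{\rho_{n_k}\}$ furnished by Lemma~\ref{lem:weakconv}, by passing to the limit in the weak form of the linear problems~(\ref{eq:sequence_evolution}), and to prove uniqueness by a direct energy estimate. The estimate asserted in the theorem is already established for $\rho$ in Lemma~\ref{lem:weakconv}, so it is inherited at no extra cost; what remains is to verify the weak formulation, the initial condition, and uniqueness. First I would record that each $\rho_{n_k}$ satisfies, for every $\eta\in{L}^2(0,T;H^1_{per}(U))$,
\[
\int_0^T \langle \rho_{n_k,t},\eta\rangle\,dt + \int_0^T\!\!\int_U \Big(\tfrac{\sigma^2}{2}\rho_{n_k,x}\eta_x + \rho_{n_k}G_{\rho_{n_k-1}}\eta_x\Big)\,dx\,dt = 0.
\]
The first term converges to $\int_0^T\langle\rho_t,\eta\rangle\,dt$ by the weak convergence $\rho_{n_k,t}\rightharpoonup\rho_t$ in ${L}^2(0,T;H^{-1}_{per})$, and the diffusion term converges to $\tfrac{\sigma^2}{2}\int_0^T\!\int_U\rho_x\eta_x$ by $\rho_{n_k}\rightharpoonup\rho$ in ${L}^2(0,T;H^1_{per})$; both are immediate from Lemma~\ref{lem:weakconv}.

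The crux, which I expect to be the main obstacle, is the nonlinear term. The product is bounded in ${L}^2(U_T)$, since $\|G_{\rho_{n_k-1}}(t)\|_\infty\le R$ by Corollary~\ref{cor:positivity_n} and $\rho_{n_k}$ is bounded in ${L}^\infty(0,T;{L}^2_{per})$; hence it has a weak limit in ${L}^2(U_T)$ along a further subsequence. To identify this limit as $\rho G_\rho$, I would test against an arbitrary smooth (hence bounded) $\phi$, which are dense in ${L}^2(U_T)$, and split, using the linearity of $\psi\mapsto G_\psi$,
\[
\rho_{n_k}G_{\rho_{n_k-1}} - \rho G_\rho = (\rho_{n_k}-\rho)G_{\rho_{n_k-1}} + \rho\,G_{\rho_{n_k-1}-\rho}.
\]
Paired with $\phi$, the first piece is bounded by $R\|\phi\|_\infty\|\rho_{n_k}-\rho\|_{{L}^1(U_T)}$ and the second by $R\|\phi\|_\infty\|\rho\|_{{L}^\infty(0,T;{L}^1)}\|\rho_{n_k-1}-\rho\|_{{L}^1(U_T)}$; both vanish by the strong convergence of Lemma~\ref{lem:Cauchy}, the index shift being harmless since the full sequence converges. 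By uniqueness of the weak limit, $\rho_{n_k}G_{\rho_{n_k-1}}\rightharpoonup\rho G_\rho$ in ${L}^2(U_T)$, so its pairing against $\eta_x\in{L}^2(U_T)$ passes to the limit and the weak formulation holds for $\rho$.

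For the initial condition, I would use the standard argument (as in~\cite{Evans}): take $\eta$ with $\eta(T)=0$, integrate $\int_0^T\langle\rho_{n_k,t},\eta\rangle$ by parts in time, pass to the limit, and compare with the corresponding identity for $\rho$ obtained from Theorem~\ref{thm:Evans}; since $\rho_{n_k}(0)=\rho_0$ for every $k$, this forces $(\rho(0)-\rho_0,\eta(0))=0$ for all admissible $\eta(0)$, whence $\rho(0)=\rho_0$. This completes existence, and the estimate is exactly the one in Lemma~\ref{lem:weakconv}.

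For uniqueness, suppose $\rho^{(1)},\rho^{(2)}$ are two weak solutions with the same data and set $w=\rho^{(1)}-\rho^{(2)}$, so $w(0)=0$. By linearity of $G$, the difference of the nonlinear terms is $\rho^{(1)}G_w+wG_{\rho^{(2)}}$, so $w$ solves the weak equation with flux $\big(\rho^{(1)}G_w+wG_{\rho^{(2)}}\big)_x$. By Theorem~\ref{thm:Evans}, $t\mapsto\|w(t)\|_2^2$ is absolutely continuous with $\tfrac{d}{dt}\|w\|_2^2=2\langle w_t,w\rangle$; substituting the equation and integrating the flux term by parts (no boundary terms, by periodicity) gives
\[
\tfrac12\tfrac{d}{dt}\|w\|_2^2 + \tfrac{\sigma^2}{2}\|w_x\|_2^2 = -\int_U\big(\rho^{(1)}G_w + wG_{\rho^{(2)}}\big)w_x\,dx.
\]
Using $\|G_w\|_\infty\le R\|w\|_1\le R\sqrt{2\ell}\,\|w\|_2$, $\|G_{\rho^{(2)}}\|_\infty\le R\sqrt{2\ell}\,\|\rho^{(2)}\|_2$, together with the ${L}^\infty(0,T;{L}^2)$ bounds on $\rho^{(1)},\rho^{(2)}$, the right-hand side is at most $C_0\|w\|_2\|w_x\|_2$; Young's inequality absorbs the $\tfrac{\sigma^2}{2}\|w_x\|_2^2$ and leaves $\tfrac{d}{dt}\|w\|_2^2\le\tfrac{C_0^2}{\sigma^2}\|w\|_2^2$. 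Since $\|w(0)\|_2=0$, Gr\"onwall's inequality gives $w\equiv0$, so the weak solution is unique.
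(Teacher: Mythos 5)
Your proposal is correct and follows essentially the same route as the paper: passage to the limit along the weakly convergent subsequence from Lemma~\ref{lem:weakconv}, the identical splitting $\rho_{n_k}G_{\rho_{n_k-1}}-\rho G_{\rho}=(\rho_{n_k}-\rho)G_{\rho_{n_k-1}}+\rho\,G_{\rho_{n_k-1}-\rho}$ resolved via the strong $L^{1}$ convergence of Lemma~\ref{lem:Cauchy}, the same recovery of the initial condition via test functions vanishing at $t=T$, and the same energy/Gr\"{o}nwall uniqueness argument with the same decomposition of the nonlinear difference. The only cosmetic difference is that you treat the first piece of the splitting by strong $L^{1}$ convergence against bounded test functions rather than the paper's weak-$L^{2}$ pairing, which is if anything slightly more careful since the factor $G_{\rho_{n_k-1}}$ there varies with $k$.
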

\begin{proof}
	For each $\eta\in{L}^{2}\left(0,T;H^{1}\left(U\right)\right)$,
	we multiply equation (\ref{eq:sequence_evolution}) (with
	$n=n_{k}$) by $\eta$ and integrate over $U_{T}$ to obtain
	\begin{equation}
	\int_{0}^{T}\left\langle \rho_{n_{k_{t}}}\left(t\right),\eta\left(t\right)\right\rangle dt+\frac{\sigma^{2}}{2}\int_{0}^{T}\int_{U}\rho_{n_{k_{x}}}\eta_{x}dxdt+\int_{0}^{T}\int_{U}\eta_{x}\rho_{n_{k}}G_{\rho_{n_{k}-1}}dxdt=0.
	\end{equation}
	There are no boundary terms due to periodic boundary conditions. Now,
	\begin{eqnarray}
	\int_{0}^{T}{\it \int_{U}\eta_{x}\rho_{n_{k}}}G_{\rho_{n_{k}-1}}dxdt & = & \int_{0}^{T}{\it \int_{U}\eta_{x}\left(\rho_{n_{k}}-\rho\right)}G_{\rho_{n_{k}-1}}dxdt\nonumber \\
	&  & +\int_{0}^{T}{\it \int_{U}\eta_{x}\rho}G_{\left(\rho_{n_{k}-1}-\rho\right)}dxdt\nonumber \\
	&  & +\int_{0}^{T}{\it \int_{U}\eta_{x}\rho}G_{\rho}dxdt. \label{eq:conv0}
	\end{eqnarray}
	We know from Lemma~\ref{lem:weakconv} that $\rho_{n_k}\rightharpoonup\rho$ in ${L}^{2}\left(0,T;H_{per}^{1}\left(U\right)\right) \subset  {L}^{2}\left(0,T;{L}_{per}^{2}\left(U\right)\right)$. Moreover, $G_{\rho_{n_{k}-1}}$ is uniformly bounded so that $\eta_x G_{\rho_{n_{k}-1}}\in {L}^{2}\left(0,T;{L}_{per}^{2}\left(U\right)\right)$. Thus,
	\begin{eqnarray}
	\int_{0}^{T}{\it \int_{U}\eta_{x}\left(\rho_{n_{k}}-\rho\right)}G_{\rho_{n_{k}-1}}dxdt & \rightarrow & 0. \label{eq:conv1}
	\end{eqnarray}
	Also,
	\begin{eqnarray}
	\int_{0}^{T}{\it \int_{U}\eta_{x}\rho}G_{\left(\rho_{n_{k}-1}-\rho\right)}dxdt 
	& \leq & \left\Vert \rho\right\Vert _{{L}^{\infty}\left(0,T;{L}_{per}^{2}\left(U\right)\right)} \left\Vert \eta_{x}\right\Vert _{{L}^{2}\left(0,T;{L}_{per}^{2}\left(U\right)\right)}\nonumber \\
	&  & \left\Vert G_{\left(\rho_{n_{k}-1}-\rho\right)}\right\Vert _{{L}^{2}\left(0,T;{L}_{per}^{2}\left(U\right)\right)}\nonumber \\
	& \leq & C\left(T\right)\left\Vert \rho_{0}\right\Vert _{2}\left\Vert \eta\right\Vert _{{L}^{2}\left(0,T;H_{per}^{1}\left(U\right)\right)}\nonumber \\
	&  & \left(\int_0^T \left\Vert \rho_{n_k - 1}\left(t\right) - \rho\left(t\right) \right\Vert^{2}_{1} dt\right)^{1/2}. \label{eq:separate}
	\end{eqnarray}
	But $\left\Vert \rho_{n_k - 1}\left(t\right) - \rho\left(t\right) \right\Vert_{1}\leq 2R$. Hence, by the strong convergence result in Lemma~\ref{lem:Cauchy}, we have
	\begin{eqnarray}
	\int_0^T \left\Vert \rho_{n_k - 1}\left(t\right) - \rho\left(t\right) \right\Vert^{2}_{1} dt 
	& \leq & 2R \left\Vert \rho_{n_k - 1}\left(t\right) - \rho\left(t\right) \right\Vert_{{L}^1\left(0,T;{L}^1\left(U\right)\right)} \nonumber \\
	& \rightarrow & 0, \label{eq:strongconv}
	\end{eqnarray}
	and thus 
	\begin{eqnarray}
	\int_{0}^{T}{\it \int_{U}\eta_{x}\rho}G_{\left(\rho_{n_{k}-1}-\rho\right)}dxdt 
	& \rightarrow & 0. \label{eq:conv2}
	\end{eqnarray}
	Combining~(\ref{eq:conv0}), (\ref{eq:conv1}) and (\ref{eq:conv2}), we have
	\begin{equation}
	\int_{0}^{T}{\it \int_{U}\eta_{x}\rho_{n_{k}}}G_{\rho_{n_{k}-1}}dxdt\rightarrow\int_{0}^{T}{\it \int_{U}\eta_{x}\rho}G_{\rho}dxdt.\label{eq:conv_1}
	\end{equation}
	By the weak convergence results established in Lemma \ref{lem:weakconv},
	we also have
	\begin{eqnarray}
	\int_{0}^{T}\left\langle \rho_{n_{k_{t}}}\left(t\right),\eta\left(t\right)\right\rangle dt & \rightarrow & \int_{0}^{T}\left\langle \rho_{t}\left(t\right),\eta\left(t\right)\right\rangle dt,\nonumber \\
	\int_{0}^{T}\int_{U}\rho_{n_{k_{x}}}\eta_{x}dxdt & \rightarrow & \int_{0}^{T}\int_{U}\rho_{x}\eta_{x}dxdt.\label{eq:conv_2}
	\end{eqnarray}
	Putting together (\ref{eq:conv_1}) and (\ref{eq:conv_2}), we obtain
	in the limit $k\rightarrow\infty$, 
	\begin{equation}
	\int_{0}^{T}\left\langle \rho_{t}\left(t\right),\eta\left(t\right)\right\rangle dt+\int_{0}^{T}\int_{U}\left(\frac{\sigma^{2}}{2}\rho_{x}\eta_{x}+\rho G_{\rho}\eta_{x}\right)dxdt=0, \label{eq:weak_form}
	\end{equation}
	for every $\eta\in{L}^{2}\left(0,T;H_{per}^{1}\left(U\right)\right)$.
	
	Finally, we have to show that $\rho\left(0\right)=\rho_0$. Pick some $\eta \in C^1\left(0,T;H^1_{per}\left(U\right)\right)$ with $\eta\left(T\right)=0$. Then, we have from~(\ref{eq:weak_form}) that 
	\begin{equation}
	-\int_{0}^{T}\left\langle \rho\left(t\right),\eta_{t}\left(t\right)\right\rangle dt+\int_{0}^{T}\int_{U}\left(\frac{\sigma^{2}}{2}\rho_{x}\eta_{x}+\rho G_{\rho}\eta_{x}\right)dxdt=\left(\rho\left(0\right),\eta\left(0\right)\right). \label{eq:comp_1}
	\end{equation}
	Similarly, we also have 
	\begin{eqnarray}
	-\int_{0}^{T}\left\langle \rho_{n_k}\left(t\right),\eta_{t}\left(t\right)\right\rangle dt+\int_{0}^{T}\int_{U}\left(\frac{\sigma^{2}}{2}\rho_{{n_k}_x}\eta_{x}+\rho_{n_k} G_{\rho_{n_k-1}}\eta_{x}\right)dxdt \nonumber \\ 
	=\left(\rho_{0},\eta\left(0\right)\right). \label{eq:comp_2}
	\end{eqnarray}
	Where we have used the fact that $\rho_{n_k}\left(0\right) = \rho_0$ for all $k$. Taking the limit $k\rightarrow\infty$ and comparing~(\ref{eq:comp_1}) and~(\ref{eq:comp_2}), we have 
	\begin{equation}
	\left(\rho\left(0\right),\eta\left(0\right)\right) = \left(\rho_{0},\eta\left(0\right)\right).
	\end{equation}
	Since $\eta$ is arbitrary, we conclude that $\rho\left(0\right)=\rho_0$. This completes the proof of the existence of a weak solution. 
	
	Now, we prove its uniqueness. Let $\rho_{1}$ and $\rho_{2}$ be weak
	solutions to (\ref{eq:evo_equation}) and set $\xi=\rho_{1}-\rho_{2}$.
	Then, for every $\eta\in{L}^{2}\left(0,T;H_{per}^{1}\left(U\right)\right)$,
	we have 
	\begin{equation}
	\int_{0}^{T}\left\langle \xi_{t}\left(t\right),\eta\left(t\right)\right\rangle dt+\frac{\sigma^{2}}{2}\int_{0}^{T}\int_{U}\xi_{x}\eta_{x}dxdt+\int_{0}^{T}\int_{U}\left(\rho_{1}G_{\rho_{1}}-\rho_{2}G_{\rho_{2}}\right)\eta_{x}dxdt=0.
	\end{equation}
	Adding and subtracting $\int_{0}^{T}\int_{U}\rho_{2}G_{\rho_{1}}\eta_{x}dxdt$,
	we obtain
	\begin{eqnarray}
	\int_{0}^{T}\left\langle \xi{}_{t}\left(t\right),\eta\left(t\right)\right\rangle dt+\frac{\sigma^{2}}{2}\int_{0}^{T}\int_{U}\xi_{x}\eta_{x}dxdt & = & -\int_{0}^{T}\int_{U}\xi G_{\rho_{1}}\eta_{x}dxdt\nonumber \\
	&  & -\int_{0}^{T}\int_{U}\rho_{2}G_{\xi}\eta_{x}dxdt.
	\end{eqnarray}
	But, 
	\begin{eqnarray}
	\left|\int_{0}^{T}\int_{U}\xi G_{\rho_{1}}\eta_{x}dxdt\right| & \leq & \left\Vert G_{\rho_{1}}\right\Vert _{{L}^{\infty}\left(0,T;{L}_{per}^{\infty}\left(U\right)\right)}\nonumber \\
	&  & \times\left\Vert \eta_{x}\right\Vert _{{L}^{2}\left(0,T;{L}_{per}^{2}\left(U\right)\right)}\left\Vert \xi\right\Vert _{{L}^{2}\left(0,T;{L}_{per}^{2}\left(U\right)\right)}\nonumber \\
	& \leq & R \left\Vert \eta_{x}\right\Vert _{{L}^{2}\left(0,T;{L}_{per}^{2}\left(U\right)\right)}\left\Vert \xi\right\Vert _{{L}^{2}\left(0,T;{L}_{per}^{2}\left(U\right)\right)}\nonumber \\
	& \leq & \frac{\sigma^{2}}{4}\left\Vert \eta_{x}\right\Vert _{{L}^{2}\left(0,T;{L}_{per}^{2}\left(U\right)\right)}^{2}\nonumber \\
	&  & +C \left\Vert \xi\right\Vert _{{L}^{2}\left(0,T;{L}^{2}\left(U\right)\right)}^{2},
	\end{eqnarray}
	and 
	\begin{eqnarray}
	\left|\int_{0}^{T}\int_{U}\rho_{2}G_{\xi}\eta_{x}dxdt\right| & \leq & \left\Vert \rho_{2}\right\Vert _{{L}^{\infty}\left(0,T;{L}_{per}^{2}\left(U\right)\right)}\nonumber \\
	&  & \times\left\Vert G_{\xi}\right\Vert _{{L}^{2}\left(0,T;{L}_{per}^{2}\left(U\right)\right)}\left\Vert \eta_{x}\right\Vert _{{L}^{2}\left(0,T;{L}_{per}^{2}\left(U\right)\right)}\nonumber \\
	& \leq & C\left(T\right)\left\Vert \rho_{0}\right\Vert _{2}\left\Vert \eta_{x}\right\Vert _{{L}^{2}\left(0,T;{L}_{per}^{2}\left(U\right)\right)}\left\Vert \xi\right\Vert _{{L}^{2}\left(0,T;{L}_{per}^{2}\left(U\right)\right)}\nonumber \\
	& \leq & \frac{\sigma^{2}}{4}\left\Vert \eta_{x}\right\Vert _{{L}^{2}\left(0,T;{L}_{per}^{2}\left(U\right)\right)}^{2}\nonumber \\
	&  & +C\left(T\right)\left\Vert \rho_{0}\right\Vert _{2}^{2}\left\Vert \xi\right\Vert _{{L}^{2}\left(0,T;{L}_{per}^{2}\left(U\right)\right)}^{2},
	\end{eqnarray}
	so that 
	\begin{eqnarray}
	&  & \int_{0}^{T}\left\langle \xi{}_{t}\left(t\right),\eta\left(t\right)\right\rangle dt+\frac{\sigma^{2}}{2}\int_{0}^{T}\int_{U}\xi_{x}\eta_{x}dxdt\nonumber \\
	& \leq & \frac{\sigma^{2}}{2}\left\Vert \eta_{x}\right\Vert _{{L}^{2}\left(0,T;{L}^{2}\left(U\right)\right)}^{2}\nonumber \\
	&  & +\left(C_{1}\left(T\right)+C_{2}\left(T\right)\left\Vert \rho_{0}\right\Vert _{2}^{2}\right)\left\Vert \xi\right\Vert _{{L}^{2}\left(0,T;{L}^{2}\left(U\right)\right)}^{2}.
	\end{eqnarray}
	Now, set $\eta=\xi$, and use Theorem \ref{thm:Evans}, we have
	\begin{eqnarray}
	\int_{0}^{T}\frac{1}{2}\frac{d}{dt}\left\Vert \xi\left(t\right)\right\Vert _{2}^{2}dt & \leq & \left(C_{1}\left(T\right)+C_{2}\left(T\right)\left\Vert \rho_{0}\right\Vert _{2}^{2}\right)\left\Vert \xi\right\Vert _{{L}^{2}\left(0,T;{L}^{2}\left(U\right)\right)}^{2}\nonumber \\
	& = & \left(C_{1}\left(T\right)+C_{2}\left(T\right)\left\Vert \rho_{0}\right\Vert _{2}^{2}\right)\int_{0}^{T}\left\Vert \xi\left(t\right)\right\Vert _{2}^{2}dt.
	\end{eqnarray}
	Since this holds for all $T$, we must have 
	\begin{equation}
	\frac{d}{dt}\left\Vert \xi\left(t\right)\right\Vert _{2}^{2}\leq\left(C_{1}\left(T\right)+C_{2}\left(T\right)\left\Vert \rho_{0}\right\Vert _{2}^{2}\right)\left\Vert \xi\left(t\right)\right\Vert _{2}^{2},
	\end{equation}
	and hence 
	\begin{equation}
	\left\Vert \xi\left(t\right)\right\Vert _{2}\leq\left(C_{1}\left(T\right)+C_{2}\left(T\right)\left\Vert \rho_{0}\right\Vert _{2}^{2}\right)\left\Vert \xi\left(0\right)\right\Vert _{2},
	\end{equation}
	for a.e. $0\leq t\leq T$. But $\left\Vert \xi\left(0\right)\right\Vert _{2}^{2}=\left\Vert \rho_{0}-\rho_{0}\right\Vert _{2}^{2}=0$
	and the $\rho$'s are continuous in time, we have
	
	\begin{equation}
	\left\Vert \rho_{1}\left(t\right)-\rho_{2}\left(t\right)\right\Vert _{2}=0.
	\end{equation}
	for all $0\leq t\leq T$. 
	
	Finally, the energy estimate is from Lemma \ref{lem:weakconv}. 
\qquad\end{proof}

\begin{remark}
	The strong convergence result (Lemma~\ref{lem:Cauchy}) is important here because  without it, we could not have concluded that expression~(\ref{eq:strongconv}) converges to $0$, because it involves a different subsequence ${\rho_{n_k -1 }}$. Strong convergence ensures that all subsequences converge in $L^1\left(0,T;L^1_{per}\left(U\right)\right)$. 
\end{remark}

Throughout this section we assumed that the initial condition is smooth,
i.e. $\rho_{0}\in C_{per}^{\infty}\left(U\right)$. We can in fact
relax this condition to $\rho_{0}\in {L}_{per}^{2}\left(U\right)$ by mollifying the initial data. 

\begin{theorem}
	\label{thm:Existnece-1}(Existence and uniqueness with relaxed regularity assumption on the initial condition)
	Let $\rho_{0}\geq0$, $\int_{U}\rho_{0}\left(x\right)dx=1$ and $\rho_{0}\in {L}_{per}^{2}\left(U\right)$.
	Then, there exists a unique weak solution $\rho\in {{L}^{\infty}\left(0,T;{L}_{per}^{2}\left(U\right)\right)}\cap {{L}^{2}\left(0,T;H_{per}^{1}\left(U\right)\right)}$, with $\rho_t \in {{L}^{2}\left(0,T;H_{per}^{-1}\left(U\right)\right)}$ to equation (\ref{eq:evo_equation}) with the estimate 
	\[
	\left\Vert \rho\right\Vert _{{L}^{\infty}\left(0,T;{L}_{per}^{2}\left(U\right)\right)}+\left\Vert \rho\right\Vert _{{L}^{2}\left(0,T;H_{per}^{1}\left(U\right)\right)}+\left\Vert \rho_{_{t}}\right\Vert _{{L}^{2}\left(0,T;H_{per}^{-1}\left(U\right)\right)}\leq C\left(T\right)\left\Vert \rho_{0}\right\Vert _{2}.
	\]
	
\end{theorem}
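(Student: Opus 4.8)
The plan is to obtain the solution for $\rho_0\in{L}_{per}^2(U)$ as a limit of solutions with smooth initial data, for which Theorem~\ref{thm:Existnece} already applies. First I would choose a standard mollification $\rho_0^{(m)}\in C_{per}^\infty(U)$ (convolving with a nonnegative periodic kernel and renormalizing) so that $\rho_0^{(m)}\geq 0$, $\int_U\rho_0^{(m)}\,dx=1$, and $\rho_0^{(m)}\to\rho_0$ in ${L}_{per}^2(U)$. For each $m$, Theorem~\ref{thm:Existnece} yields a unique weak solution $\rho^{(m)}$ satisfying the energy estimate with right-hand side $C(T)\|\rho_0^{(m)}\|_2$; since $\rho_0^{(m)}\to\rho_0$ in ${L}_{per}^2(U)$ the quantities $\|\rho_0^{(m)}\|_2$ are uniformly bounded, so that
\[
\|\rho^{(m)}\|_{{L}^{\infty}(0,T;{L}_{per}^{2}(U))}+\|\rho^{(m)}\|_{{L}^{2}(0,T;H_{per}^{1}(U))}+\|\rho^{(m)}_{t}\|_{{L}^{2}(0,T;H_{per}^{-1}(U))}\leq C(T)
\]
uniformly in $m$.

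Next I would extract a limit. The uniform bounds give, along a subsequence, $\rho^{(m)}\rightharpoonup\rho$ in ${L}^2(0,T;H_{per}^1(U))$ and $\rho^{(m)}_t\rightharpoonup\rho_t$ in ${L}^2(0,T;H_{per}^{-1}(U))$, with $\rho$ inheriting the same estimate; by Theorem~\ref{thm:Evans}, $\rho\in C(0,T;{L}_{per}^2(U))$, so the initial condition is meaningful. The crucial additional ingredient is \emph{strong} convergence: since the embedding $H_{per}^1(U)\hookrightarrow {L}_{per}^2(U)$ is compact while ${L}_{per}^2(U)\hookrightarrow H_{per}^{-1}(U)$ is continuous, the Aubin--Lions lemma applied to the uniform bounds on $\rho^{(m)}$ and $\rho^{(m)}_t$ gives $\rho^{(m)}\to\rho$ strongly in ${L}^2(0,T;{L}_{per}^2(U))$, and hence, on the bounded interval $U$, also in ${L}^2(0,T;{L}_{per}^1(U))$. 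Because each $\rho^{(m)}$ arises from smooth data, Corollary~\ref{cor:positivity} gives $\rho^{(m)}\geq 0$ and $\|\rho^{(m)}(t)\|_1=1$; passing to the limit through the strong ${L}^1$ convergence shows $\rho\geq 0$ and $\|\rho(t)\|_1=1$ for a.e.\ $t$, whence $\|G_{\rho^{(m)}}\|_\infty\leq R$ and $\|G_\rho\|_\infty\leq R$ by~\eqref{eq:Gtilda}.

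With these convergences in hand, I would pass to the limit in the weak formulation~\eqref{eq:weak_form} written for $\rho^{(m)}$. The two linear terms converge by the weak convergence of $\rho^{(m)}_t$ and $\rho^{(m)}_x$, exactly as in the proof of Theorem~\ref{thm:Existnece}. For the nonlinear term I would use the splitting
\[
\rho^{(m)}G_{\rho^{(m)}}-\rho G_{\rho}=\left(\rho^{(m)}-\rho\right)G_{\rho^{(m)}}+\rho\, G_{\rho^{(m)}-\rho}.
\]
The first piece is controlled by $\|G_{\rho^{(m)}}\|_\infty\leq R$ together with the strong ${L}^2$ convergence of $\rho^{(m)}$; the second is handled by $\|G_{\rho^{(m)}-\rho}\|_\infty\leq R\|\rho^{(m)}-\rho\|_1$ (or the ${L}^2$ bound~\eqref{eq:est_2}), which tends to $0$ by the strong ${L}^1$ (resp.\ ${L}^2$) convergence, paired against $\rho\in {L}^\infty(0,T;{L}_{per}^2(U))$ and $\eta_x\in {L}^2(0,T;{L}_{per}^2(U))$ via Cauchy--Schwarz in time. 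The initial condition $\rho(0)=\rho_0$ is then identified by testing against $\eta\in C^1(0,T;H_{per}^1(U))$ with $\eta(T)=0$ and using $\rho^{(m)}(0)=\rho_0^{(m)}\to\rho_0$, exactly as in Theorem~\ref{thm:Existnece}.

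Finally, uniqueness carries over verbatim from Theorem~\ref{thm:Existnece}: that argument uses only the weak formulation, the ${L}^\infty(0,T;{L}_{per}^2(U))$ bound on a solution, and the uniform bound $\|G_\rho\|_\infty\leq R$, all of which remain valid for $\rho_0\in{L}_{per}^2(U)$. The energy estimate is inherited from the uniform bounds by weak lower semicontinuity, noting $\|\rho_0^{(m)}\|_2\to\|\rho_0\|_2$. I expect the main obstacle to be the passage to the limit in the nonlinear term $\rho^{(m)}G_{\rho^{(m)}}$, since weak convergence alone cannot control a product; this is precisely what the Aubin--Lions compactness step is designed to overcome, supplying the strong ${L}^2$ (and ${L}^1$) convergence needed.
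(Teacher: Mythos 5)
Your proposal is correct and follows the paper's overall skeleton (mollify the data, invoke Theorem~\ref{thm:Existnece} for each mollification, use the uniform estimate $\|\rho_0^{(m)}\|_2\le\|\rho_0\|_2$ to extract weak limits, pass to the limit in the weak formulation via the same three-term splitting of the nonlinear term, identify $\rho(0)=\rho_0$, and repeat the uniqueness argument verbatim). The genuine difference is in how you justify convergence of the nonlinear term. The paper works with weak convergence only: for the piece $\int_0^T\int_U\eta_x\,\rho\,G_{(\rho^{\epsilon_k}-\rho)}\,dx\,dt$ it uses a Fubini/duality trick, rewriting the integral as $\int_0^T\int_U h\,(\rho^{\epsilon_k}-\rho)\,dy\,dt$ with $h(y,t)=\int_{y-R}^{y+R}\eta_x(x,t)\rho(x,t)(x-y)\,dx\in L^2(0,T;L^2_{per}(U))$ and appealing to weak $L^2$ convergence; the remaining piece $\int_0^T\int_U\eta_x(\rho^{\epsilon_k}-\rho)G_{\rho^{\epsilon_k}}$ is dispatched by weak convergence against the (merely bounded, $k$-dependent) factor $\eta_xG_{\rho^{\epsilon_k}}$. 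You instead invoke Aubin--Lions compactness to upgrade to strong convergence $\rho^{(m)}\to\rho$ in $L^2(0,T;L^2_{per}(U))$ (hence in $L^2(0,T;L^1_{per}(U))$), after which both pieces converge by elementary H\"older estimates using $\|G_{\rho^{(m)}}\|_\infty\le R$ and $\|G_{\rho^{(m)}-\rho}\|_\infty\le R\|\rho^{(m)}-\rho\|_1$. Your route requires importing one extra piece of machinery, but it buys robustness: in particular, the paper's treatment of the first piece pairs a weakly convergent sequence against test functions that themselves vary with $k$, which is not justified by weak convergence alone and would need either strong convergence of $G_{\rho^{\epsilon_k}}$ or an argument like yours; your strong-compactness step closes that issue cleanly, and also gives you nonnegativity and $\|\rho(t)\|_1=1$ in the limit for free. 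One minor caveat: your parenthetical suggestion to use the $L^2$ bound~(\ref{eq:est_2}) for the second piece would require $\eta_x\rho\in L^2$ in space, which you do not have; stick with the $\|G_{\phi}\|_\infty\le R\|\phi\|_1$ route you state as the primary one.
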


\begin{proof}
	Let $\epsilon>0$ and consider the modified problem
	\begin{align}
	& \begin{cases}
	\begin{aligned}\rho_{n_{t}}^{\epsilon}-\frac{\sigma^{2}}{2}\rho_{n_{xx}}^{\epsilon} & =\left(\rho_{n}^{\epsilon}G_{\rho_{n-1}^{\epsilon}}\right)_{x} &  & \text{in}\ U_{T}\\
	\rho_{n}^{\epsilon}\left(\ell,\cdot\right) & =\rho_{n}^{\epsilon}\left(-\ell,\cdot\right) &  & \text{on}\ \partial U\times\left[0,T\right]\\
	\rho_{n}^{\epsilon} & =\rho_{0}^{\epsilon} &  & \text{on}\ U\times\left\{ t=0\right\} 
	\end{aligned}
	\end{cases}\label{eq:sequence_evolution_mollify}
	\end{align}
	where 
	\begin{equation}
	\rho_{0}^{\epsilon}\left(x\right):=\int_{U}j_{\epsilon}\left(x-y\right)\rho_{0}\left(y\right)dy,
	\end{equation}
	and $j_{\epsilon}\left(x\right)=\epsilon^{-1}j\left(\epsilon^{-1}x\right)$.
	Here, $j$ is a standard positive mollifier with compact support on
	$U$ and $\int_{U}j_{\epsilon}\left(x\right)dx$=1. 
	
	With mollification, $\rho_{0}^{\epsilon}$
	is now smooth and we can apply Theorem~\ref{thm:Existnece} to conclude that there exists a unique weak solution $\rho^{\epsilon}\in {{L}^{\infty}\left(0,T;{L}_{per}^{2}\left(U\right)\right)}\cap {{L}^{2}\left(0,T;H_{per}^{1}\left(U\right)\right)}$, with $\rho^{\epsilon}_t \in {{L}^{2}\left(0,T;H_{per}^{-1}\left(U\right)\right)}$ to equation (\ref{eq:sequence_evolution_mollify}) with the estimate 
	\begin{equation}
	\left\Vert \rho^{\epsilon}\right\Vert _{{L}^{\infty}\left(0,T;{L}_{per}^{2}\left(U\right)\right)}+\left\Vert \rho^{\epsilon}\right\Vert _{{L}^{2}\left(0,T;H_{per}^{1}\left(U\right)\right)}+\left\Vert \rho^{\epsilon}_{_{t}}\right\Vert _{{L}^{2}\left(0,T;H_{per}^{-1}\left(U\right)\right)}\leq C\left(T\right)\left\Vert \rho^{\epsilon}_{0}\right\Vert _{2}.
	\end{equation}
	But for all $\epsilon$, we have $\left\Vert \rho^{\epsilon}_{0}\right\Vert _{2} \leq \left\Vert \rho_{0}\right\Vert _{2}$. Hence, there exists $\rho\in {{L}^{\infty}\left(0,T;{L}_{per}^{2}\left(U\right)\right)}\cap {{L}^{2}\left(0,T;H_{per}^{1}\left(U\right)\right)}$, with $\rho_t \in {{L}^{2}\left(0,T;H_{per}^{-1}\left(U\right)\right)}$, satisfying 
	\begin{equation}
	\left\Vert \rho\right\Vert _{{L}^{\infty}\left(0,T;{L}_{per}^{2}\left(U\right)\right)}+\left\Vert \rho\right\Vert _{{L}^{2}\left(0,T;H_{per}^{1}\left(U\right)\right)}+\left\Vert \rho_{_{t}}\right\Vert _{{L}^{2}\left(0,T;H_{per}^{-1}\left(U\right)\right)}\leq C\left(T\right)\left\Vert \rho_{0}\right\Vert _{2},
	\end{equation}
	and a sequence $\left\{\epsilon_k\right\}$, with $\epsilon_k\rightarrow 0$, such that 
	\begin{eqnarray}
	\begin{cases}
	\rho^{\epsilon_k}\rightharpoonup\rho & \mbox{in }{L}^{2}\left(0,T;H_{per}^{1}\left(U\right)\right),\\
	\rho^{\epsilon_k}_t\rightharpoonup\rho_{t} & \mbox{in }{L}^{2}\left(0,T;H_{per}^{-1}\left(U\right)\right),
	\end{cases} \label{eq:weak_conv_mollify}
	\end{eqnarray}
	as $k\rightarrow \infty$. We now show that $\rho$ is in fact a weak solution to~(\ref{eq:evo_equation}). Since each $\rho^{\epsilon_k}$ solves the weak formulation of~(\ref{eq:sequence_evolution_mollify}) (albeit with different initial data), we have
	\begin{equation}
	\int_{0}^{T}\left\langle \rho^{\epsilon_k}\left(t\right),\eta\left(t\right)\right\rangle dt+\frac{\sigma^{2}}{2}\int_{0}^{T}\int_{U}\rho^{\epsilon_k}\eta_{x}dxdt+\int_{0}^{T}\int_{U}\eta_{x}\rho^{\epsilon_k}G_{\rho^{\epsilon_k}} dxdt=0.
	\end{equation} 
	Using~(\ref{eq:weak_conv_mollify}), we can replace $\rho^{\epsilon_k}$ by $\rho$ in the first two integrals above in the limit $k\rightarrow \infty$. Moreover, as in~(\ref{eq:conv0}), we write the last integral as
	\begin{eqnarray}
	\int_{0}^{T} \int_{U}\eta_{x}\rho^{\epsilon_k}G_{\rho^{\epsilon_k}}dxdt 
	& = & \int_{0}^{T} \int_{U}\eta_{x}\left(\rho^{\epsilon_k}-\rho\right) G_{\rho^{\epsilon_k}} dxdt\nonumber \\
	&  & +\int_{0}^{T} \int_{U}\eta_{x}\rho G_{\left(\rho^{\epsilon_k}-\rho\right)}dxdt\nonumber \\
	&  & +\int_{0}^{T} \int_{U}\eta_{x}\rho G_{\rho}dxdt. \label{eq:111}
	\end{eqnarray}
	Since $\left\Vert G_{\rho^{\epsilon_k}}\left(t\right) \right\Vert_{\infty} \leq R \left\Vert \rho^{\epsilon_k}\left(t\right) \right\Vert_{1} \leq R \left\Vert \rho^{\epsilon_k}_0 \right\Vert_{1} = R \left\Vert \rho_0 \right\Vert_{1} = R$, we have $\eta_{x} G_{\rho^{\epsilon_k}} \in {L}^2\left(0,T;{L}^2_{per}\left(U\right)\right)$ and hence
	\begin{equation}
	\int_{0}^{T} \int_{U}\eta_{x}\left(\rho^{\epsilon_k}-\rho\right) G_{\rho^{\epsilon_k}} dxdt \rightarrow 0.
	\end{equation}
	Next, we can write 
	\begin{eqnarray}
	\int_{0}^{T} \int_{U}\eta_{x}\rho G_{\left(\rho^{\epsilon_k}-\rho\right)}dxdt
	& = & \int_{0}^{T} \int_{U}\eta_{x}\rho \left[ \int_{x-R}^{x+R} \left(\rho^{\epsilon_k}-\rho\right) (x-y) dy \right] dxdt \nonumber \\
	& = & \int_{0}^{T} \int_{U} h \left(\rho^{\epsilon_k}-\rho\right) dy dt, \label{eq:h}
	\end{eqnarray}
	where we have defined
	\begin{eqnarray}
	h\left(y,t\right)
	& := &  \int_{y-R}^{y+R} \eta_{x}\left(x,t\right) \rho\left(x,t\right) (x-y) dx. \nonumber \\
	\end{eqnarray}
	Clearly, $\left\Vert h\left(t\right)\right\Vert_{\infty} \leq R \left\Vert \eta_x\left(t\right)\right\Vert_{2} \left\Vert \rho\left(t\right)\right\Vert_{2}$ so that in particular,  $h \in{{L}^2\left(0,T,{L}_{per}^2\left(U\right)\right)}$ and from~(\ref{eq:h}) we obtain
	\begin{equation}
	\int_{0}^{T} \int_{U}\eta_{x}\rho G_{\left(\rho^{\epsilon_k}-\rho\right)}dxdt \rightarrow 0. 
	\end{equation}
	Thus, we have shown that $\rho$ satisfies 
	\begin{equation}
	\int_{0}^{T}\left\langle \rho\left(t\right),\eta\left(t\right)\right\rangle dt+\frac{\sigma^{2}}{2}\int_{0}^{T}\int_{U}\rho\eta_{x}dxdt+\int_{0}^{T}\int_{U}\eta_{x}\rho G_{\rho} dxdt=0.
	\end{equation} 
	To show that $\rho\left(0\right)=\rho_0$, we again take $\eta \in C^1\left(0,T;H^1_{per}\left(U\right)\right)$ with $\eta\left(T\right)=0$. Since $\rho_0^{\epsilon_k}\rightarrow \rho_0$ uniformly, we have (c.f. expressions~(\ref{eq:comp_1}) and~(\ref{eq:comp_2})) 
	\begin{equation}
	\left(\rho\left(0\right),\eta\left(0\right)\right) = \left(\rho_{0},\eta\left(0\right)\right).
	\end{equation}
	Since $\eta$ is arbitrary, we have $\rho\left(0\right)=\rho_0$. The uniqueness follows from exactly the same argument in the proof of Theorem~\ref{thm:Existnece} and we omit writing it again here. 
\qquad\end{proof}

\section{Higher Regularity\label{sec:regularity}}

In this section, we prove improved regularity of the weak solution to (\ref{eq:evo_equation}). This allows us to put the results in Section \ref{sec:apriori} on a rigorous footing. As in the previous section, we always mollify $\rho_{0}$ by $j_{\epsilon}$ so that the resulting evolution equations (\ref{eq:sequence_evolution}) admit smooth solutions. This allows us to differentiate the equation as many times as required, and we take the limit $\epsilon\rightarrow0$ at the end. For simplicity of notation, we drop the $\epsilon$ superscripts on $\rho_n$ and implicitly assume that we perform the limit at the end. 

First, we prove a useful estimate. 
\begin{proposition}
	\label{prop:induction_estimate}Let $u,v\in C^{\infty}\left(U\right)$.
	Then for $k\geq2$ we have the estimate 
	\begin{eqnarray}
	\left\Vert uG_{v}\right\Vert _{H_{per}^{k}\left(U\right)} & \leq & C\left\Vert u\right\Vert _{H_{per}^{k}\left(U\right)}\left\Vert v\right\Vert _{H_{per}^{k-1}\left(U\right)}.
	\end{eqnarray}
\end{proposition}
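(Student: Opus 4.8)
The plan is to combine two observations: that the operator $G$ \emph{gains} one derivative, in the precise sense that $\left\Vert G_{v}\right\Vert _{H_{per}^{k}\left(U\right)}\le C\left\Vert v\right\Vert _{H_{per}^{k-1}\left(U\right)}$, and that $H_{per}^{k}\left(U\right)$ is a Banach algebra for $k\ge1$ (true in one dimension, since $H^{k}$ then embeds into the continuous functions). Granting both, the proposition is immediate:
\[
\left\Vert uG_{v}\right\Vert _{H_{per}^{k}\left(U\right)}\le C\left\Vert u\right\Vert _{H_{per}^{k}\left(U\right)}\left\Vert G_{v}\right\Vert _{H_{per}^{k}\left(U\right)}\le C\left\Vert u\right\Vert _{H_{per}^{k}\left(U\right)}\left\Vert v\right\Vert _{H_{per}^{k-1}\left(U\right)}.
\]
This also accounts for the asymmetry between the two Sobolev indices in the statement: it is precisely the one-derivative gain of $G$ that replaces $H_{per}^{k}$ by $H_{per}^{k-1}$ on the factor $v$. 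Thus the real content lies in the gain estimate.

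To establish the gain I would differentiate $G_{v}$ repeatedly, starting from the first-derivative formula \eqref{eq:G_diff_first} and proving by induction on $j$ that, for every $1\le j\le k$,
\[
\left(G_{v}\right)^{(j)}(x)=\int_{x-R}^{x+R}v^{(j-1)}(y)\,dy-R\,v^{(j-1)}(x+R)-R\,v^{(j-1)}(x-R).
\]
The inductive step uses only the fundamental theorem of calculus, which turns the boundary difference $v^{(j-1)}(x+R)-v^{(j-1)}(x-R)$ arising from differentiating the integral into $\int_{x-R}^{x+R}v^{(j)}(y)\,dy$, while the two shift terms pick up one further derivative. Each of the three terms on the right is then controlled in $L_{per}^{2}\left(U\right)$ by $\left\Vert v^{(j-1)}\right\Vert _{2}$: the integral by Young's inequality (its kernel $\mathbf{1}_{\left[-R,R\right]}$ has $L^{1}$ norm $2R$), and the two shifts because translation is an $L_{per}^{2}$-isometry by periodicity. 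Bounding also the zeroth-order term by $\left\Vert G_{v}\right\Vert _{2}\le R^{2}\left\Vert v\right\Vert _{2}$ (Young again, with kernel $z\mathbf{1}_{\left[-R,R\right]}$) and summing the squared $L^{2}$ norms over $0\le j\le k$ yields $\left\Vert G_{v}\right\Vert _{H_{per}^{k}\left(U\right)}\le C\left\Vert v\right\Vert _{H_{per}^{k-1}\left(U\right)}$, since the highest derivative $\left(G_{v}\right)^{(k)}$ calls only on $v^{(k-1)}$.

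The one point needing care --- and the crux of the argument --- is exactly those boundary shift terms $v^{(j-1)}(x\pm R)$: at first glance they look dangerous, since pointwise evaluation is not $L^{2}$-bounded. The resolution is that, viewed as functions of $x$, these are honest translates of the whole function $v^{(j-1)}$, not evaluations at a single point, so periodicity makes them isometries on $L_{per}^{2}\left(U\right)$; and critically, differentiating $G_{v}$ never forces a derivative of $v$ of order higher than the order of the $G_{v}$-derivative minus one, which is the source of the gain. For the final product step, if one prefers not to cite the Banach algebra property directly, the Leibniz rule combined with the one-dimensional Sobolev embedding $\left\Vert u\right\Vert _{\infty}\le C\left\Vert u\right\Vert _{H_{per}^{1}\left(U\right)}$ (Morrey) and Gagliardo--Nirenberg interpolation gives the tame bound $\left\Vert uG_{v}\right\Vert _{H_{per}^{k}\left(U\right)}\le C\bigl(\left\Vert u\right\Vert _{H_{per}^{k}\left(U\right)}\left\Vert G_{v}\right\Vert _{\infty}+\left\Vert u\right\Vert _{\infty}\left\Vert G_{v}\right\Vert _{H_{per}^{k}\left(U\right)}\bigr)$; combined with the gain estimate and $\left\Vert G_{v}\right\Vert _{\infty}\le R\left\Vert v\right\Vert _{1}\le C\left\Vert v\right\Vert _{H_{per}^{k-1}\left(U\right)}$, this closes the proof.
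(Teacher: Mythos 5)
Your proof is correct and follows essentially the same route as the paper: both arguments hinge on the explicit formula for $\left(G_{v}\right)^{(j)}$, which shows that $j$ derivatives of $G_{v}$ cost only $j-1$ derivatives of $v$ (your closed form is the paper's formula~(\ref{eq:diff_G_high_order}) with the difference $v^{(j-2)}(x+R)-v^{(j-2)}(x-R)$ rewritten via the fundamental theorem of calculus), combined with a Leibniz/Morrey product bound. The only difference is organizational: you factor the argument into a standalone gain estimate $\left\Vert G_{v}\right\Vert _{H_{per}^{k}\left(U\right)}\le C\left\Vert v\right\Vert _{H_{per}^{k-1}\left(U\right)}$ followed by the Banach algebra (or tame product) property of $H_{per}^{k}\left(U\right)$, whereas the paper expands $\left(uG_{v}\right)^{(k)}$ directly and bounds the terms one by one.
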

\begin{proof}
	We have 
	\[
	\left\Vert uG_{v}\right\Vert _{H_{per}^{k}\left(U\right)}^{2}\leq C\left(\left\Vert uG_{v}\right\Vert _{2}^{2}+\left\Vert \left(uG_{v}\right)^{\left(k\right)}\right\Vert _{2}^{2}\right),
	\]
	where $\left(\cdot\right)^{(k)}$ denotes the $k^{\text th}$ derivative with respect to $x$. Applying the Leibniz rule, we have
	\begin{equation}
	\left(uG_{v}\right)^{\left(k\right)}=\sum_{i=0}^{k}\left(\begin{array}{c}
	k\\
	i
	\end{array}\right)u^{\left(k-i\right)}\left(G_{v}\right)^{\left(i\right)},
	\end{equation}
	But, 
	\begin{equation}
	\left(G_{v}\right)^{\left(i\right)}\left(x\right)=\begin{cases}
	G_{v}\left(x\right) & i=0\\
	-R\left[v\left(x+R\right)+v\left(x-R\right)\right]+\int_{x-R}^{x+R}v\left(y\right)dy & i=1\\
	-R\left[v^{\left(i-1\right)}\left(x+R\right)+v^{\left(i-1\right)}\left(x-R\right)\right]\\
	+v^{\left(i-2\right)}\left(x+R\right)-v^{\left(i-2\right)}\left(x-R\right) & i\geq2
	\end{cases}\label{eq:diff_G_high_order}
	\end{equation}
	Hence we have the bound
	\begin{eqnarray}
	\left\Vert \left(uG_{v}\right)^{\left(k\right)}\right\Vert _{2}^{2} & \leq & C_{0}\left\Vert u^{\left(k\right)}\right\Vert _{2}^{2}\left\Vert G_{v}\right\Vert _{\infty}^{2}\nonumber \\
	&  & +\sum_{i=1}^{k}C_{i}\left\Vert u^{\left(k-i\right)}\right\Vert _{\infty}^{2}\left\Vert \left(G_{v}\right)^{\left(i\right)}\right\Vert _{2}^{2}\nonumber \\
	& \leq & C_{0}\left\Vert u\right\Vert _{H_{per}^{k}\left(U\right)}^{2}\left\Vert v\right\Vert _{2}^{2}\nonumber \\
	&  & +\sum_{i=1}^{k}C_{i}\left\Vert u^{\left(k-i\right)}\right\Vert _{H_{per}^{1}\left(U\right)}^{2}\left\Vert \left(G_{v}\right)^{\left(i\right)}\right\Vert _{2}^{2}.
	\end{eqnarray}
	For $i\geq2$, 
	\begin{eqnarray}
	\left\Vert \left(G_{v}\right)^{\left(i\right)}\right\Vert _{2}^{2} & \leq & C\left(\left\Vert v^{\left(i-1\right)}\right\Vert _{2}^{2}+\left\Vert v^{\left(i-2\right)}\right\Vert _{2}^{2}\right)\nonumber \\
	& \leq & C\left\Vert v\right\Vert _{H_{per}^{i-1}\left(U\right)}^{2},
	\end{eqnarray}
	and for $i=1$,
	\begin{eqnarray}
	\left\Vert \left(G_{v}\right)^{\left(i\right)}\right\Vert _{2}^{2} & \leq & C\left(\left\Vert v\right\Vert _{2}^{2}+\left\Vert v\right\Vert _{1}^{2}\right)\nonumber \\
	& \leq & C\left\Vert v\right\Vert _{2}^{2}.
	\end{eqnarray}
	Keeping only the highest Sobolev norms, we have
	\begin{eqnarray}
	\left\Vert \left(uG_{v}\right)^{\left(k\right)}\right\Vert _{2}^{2} & \leq & C_{0}\left\Vert u\right\Vert _{H_{per}^{k}\left(U\right)}^{2}\left\Vert v\right\Vert _{2}^{2}+C_{1}\left\Vert u\right\Vert _{H_{per}^{k}\left(U\right)}^{2}\left\Vert v\right\Vert _{H_{per}^{k-1}\left(U\right)}^{2}\nonumber \\
	& \leq & C\left\Vert u\right\Vert _{H_{per}^{k}\left(U\right)}^{2}\left\Vert v\right\Vert _{H_{per}^{k-1}\left(U\right)}^{2}.
	\end{eqnarray}
	
\qquad\end{proof}

Now, we assume that $\rho_0 \in H^k_{per}$ for some $k\geq 0$ and prove the corresponding regularity of $\rho$. 
\begin{theorem}
	\label{thm:x_regular}(Improved regularity) Let $k\geq0$ and suppose $\rho_{0}\in H_{per}^{k}\left(U\right)$
	with $\rho_{0}\geq0$ and $\int_{U}\rho_{0}\left(x\right)dx=1$. Then
	the unique solution to (\ref{eq:evo_equation}) satisfies 
	\begin{eqnarray}
	\rho & \in & {L}^{2}\left(0,T;H_{per}^{k+1}\left(U\right)\right)\cap{L}^{\infty}\left(0,T;H_{per}^{k}\left(U\right)\right), \nonumber
	\end{eqnarray}
	with the estimate 
	\begin{eqnarray}
	\left\Vert \rho\right\Vert _{{L}^{2}\left(0,T;H_{per}^{k+1}\left(U\right)\right)}+\left\Vert \rho\right\Vert _{{L}^{\infty}\left(0,T;H_{per}^{k}\left(U\right)\right)} & \leq & C\left(\rho_{0};k,T\right), \nonumber
	\end{eqnarray}
	where 
	\[
	C\left(\rho_{0};k,T\right):=C\left(T\right)\left(\sum_{i=0}^{k}\left\Vert \rho_{0}\right\Vert _{H_{per}^{k-i}\left(U\right)}^{2^{i+1}}\right)^{1/2}.
	\]
\end{theorem}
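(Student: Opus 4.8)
The plan is to argue by induction on $k$, establishing uniform-in-$n$ bounds on the smooth approximants $\rho_n$ solving~(\ref{eq:sequence_evolution}) (with mollified initial data, as described at the start of this section) and then passing to the limit. The cases $k=0$ and $k=1$ are precisely Propositions~\ref{prop:unif_L2_est} and~\ref{prop:unif_L2_est_higher}, whose right-hand sides coincide with $C(\rho_0;0,T)$ and $C(\rho_0;1,T)$; these serve as the base of the induction. For $k\geq 2$ I would assume the level-$(k-1)$ estimate,
\[
\left\Vert \rho_n\right\Vert_{{L}^\infty(0,T;H^{k-1}_{per}(U))} + \left\Vert \rho_n\right\Vert_{{L}^2(0,T;H^{k}_{per}(U))} \leq C(\rho_0;k-1,T),
\]
uniformly in $n$, and deduce the level-$k$ bound. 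Note that this hypothesis controls $\rho_{n-1}$ in ${L}^\infty(0,T;H^{k-1}_{per})$ and simultaneously $\rho_n$ in ${L}^2(0,T;H^{k}_{per})$, both of which are used below.

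For the inductive step I would differentiate~(\ref{eq:sequence_evolution}) $k$ times in $x$, multiply by $\partial_x^k \rho_n$, and integrate over $U$. Since all boundary terms vanish by periodicity, integration by parts in the diffusion and forcing terms gives
\[
\frac{1}{2}\frac{d}{dt}\left\Vert \partial_x^k\rho_n(t)\right\Vert_2^2 + \frac{\sigma^2}{2}\left\Vert \partial_x^{k+1}\rho_n(t)\right\Vert_2^2 = -\int_U \partial_x^k\!\left(\rho_n G_{\rho_{n-1}}\right)\partial_x^{k+1}\rho_n\, dx.
\]
Cauchy--Schwarz and Young's inequality absorb one copy of $\tfrac{\sigma^2}{4}\left\Vert \partial_x^{k+1}\rho_n\right\Vert_2^2$ into the left-hand side, leaving the forcing bounded by $\tfrac{1}{\sigma^2}\left\Vert \partial_x^k(\rho_n G_{\rho_{n-1}})\right\Vert_2^2$. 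Proposition~\ref{prop:induction_estimate} then yields
\[
\left\Vert \partial_x^k\!\left(\rho_n G_{\rho_{n-1}}\right)\right\Vert_2^2 \leq \left\Vert \rho_n G_{\rho_{n-1}}\right\Vert_{H^k_{per}(U)}^2 \leq C\left\Vert \rho_n(t)\right\Vert_{H^k_{per}(U)}^2 \left\Vert \rho_{n-1}(t)\right\Vert_{H^{k-1}_{per}(U)}^2.
\]

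To close the estimate I would integrate in $t$, bound $\left\Vert \rho_{n-1}(t)\right\Vert_{H^{k-1}_{per}}^2$ by its ${L}^\infty$-in-time value $C(\rho_0;k-1,T)^2$, and then invoke the induction hypothesis $\int_0^T\left\Vert \rho_n\right\Vert_{H^k_{per}}^2\,dt = \left\Vert \rho_n\right\Vert_{{L}^2(0,T;H^k_{per})}^2 \leq C(\rho_0;k-1,T)^2$. This gives, uniformly in $n$,
\[
\sup_{0\leq t\leq T}\left\Vert \partial_x^k\rho_n(t)\right\Vert_2^2 + \int_0^T \left\Vert \partial_x^{k+1}\rho_n\right\Vert_2^2\, dt \leq C\Bigl(\left\Vert \rho_0\right\Vert_{H^k_{per}}^2 + C(\rho_0;k-1,T)^4\Bigr).
\]
Adding the lower-order norms already controlled at level $k-1$, and checking the elementary inequality $\left\Vert \rho_0\right\Vert_{H^k_{per}}^2 + C(\rho_0;k-1,T)^4 \leq C\, C(\rho_0;k,T)^2$ (which follows by expanding the square and applying Young's inequality term by term to match the doubling exponents $2^{i+1}$ in the definition of $C(\rho_0;k,T)$), I obtain the level-$k$ bound on $\rho_n$ uniformly in $n$. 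Finally, since $\rho_n\to\rho$ strongly in ${L}^1(0,T;{L}^1_{per})$ by Lemma~\ref{lem:Cauchy}, weak-$*$ lower semicontinuity of the norms transfers the uniform bound to the unique limit $\rho$, and letting the mollification parameter $\epsilon\to0$ (using $\left\Vert \rho_0^\epsilon\right\Vert_{H^k_{per}}\leq\left\Vert \rho_0\right\Vert_{H^k_{per}}$) completes the proof.

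The main obstacle I anticipate is organizing the induction so that the nonlinear coupling closes cleanly: the level-$k$ estimate for $\rho_n$ depends, through $G_{\rho_{n-1}}$, on a level-$(k-1)$ bound for $\rho_{n-1}$, and the decisive point is that the previous induction level supplies not only the ${L}^\infty(0,T;H^{k-1}_{per})$ control of the coefficient but also the ${L}^2(0,T;H^k_{per})$ control of $\rho_n$ itself. It is precisely this ${L}^2$-in-time gain of one derivative that lets the time-integrated forcing be absorbed without invoking Gr\"onwall, keeping the dependence on $\left\Vert \rho_0\right\Vert$ polynomial (the nested doubling pattern $2^{i+1}$) rather than exponential. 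Confirming the applicability of Proposition~\ref{prop:induction_estimate} (it requires $k\geq2$, matching the split into base cases $k\in\{0,1\}$) and tracking the constants through the recursion are the remaining bookkeeping tasks.
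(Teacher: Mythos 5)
Your proposal is correct and follows essentially the same route as the paper: induction on $k$ with Propositions~\ref{prop:unif_L2_est} and~\ref{prop:unif_L2_est_higher} as base cases, an energy estimate on $\partial_x^k\rho_n$ whose forcing term is controlled by Proposition~\ref{prop:induction_estimate}, closure via the ${L}^{\infty}$-in-time bound on $\rho_{n-1}$ at the lower level together with the ${L}^{2}$-in-time gain of one derivative on $\rho_n$, and the same constant bookkeeping $\left\Vert \rho_{0}\right\Vert _{H_{per}^{k}}^{2}+C\left(\rho_{0};k-1,T\right)^{4}\leq C\, C\left(\rho_{0};k,T\right)^{2}$. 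The only cosmetic difference is that you test with $\partial_x^k\rho_n$ while the paper tests the $k$-times-differentiated equation with $-\partial_x^{k+2}\rho_n$, which is just an index shift in the same argument.
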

\begin{proof}
	We prove the statements by proving uniform estimates on $\rho_{n}$
	by induction on $k$. The base case $k=0$ is provided in Proposition~\ref{prop:unif_L2_est}.
	The $k=1$ case is Proposition~\ref{prop:unif_L2_est_higher}. Suppose for some $k\geq1$,
	\begin{eqnarray}
	\left\Vert \rho_{n}\right\Vert _{{L}^{2}\left(0,T;H_{per}^{k+1}\left(U\right)\right)}+\left\Vert \rho_{n}\right\Vert _{{L}^{\infty}\left(0,T;H_{per}^{k}\left(U\right)\right)} & \leq & C\left(\rho_{0};k,T\right),\label{eq:inductive_hyp}
	\end{eqnarray}
	for all $n$. We differentiate equation (\ref{eq:sequence_evolution})
	$k$ times with respect to $x$, multiply it by $-\partial^{k+2}_x\rho_{n}$
	and integrate over $U$ to get
	\begin{eqnarray}
	&  & \frac{1}{2}\frac{d}{dt}\left\Vert \partial_{x}^{k+1}\rho_{n}\left(t\right)\right\Vert _{2}^{2}+\frac{\sigma^{2}}{2}\left\Vert \partial_{x}^{k+2}\rho_{n}\left(t\right)\right\Vert _{2}^{2}\nonumber \\
	& \leq & \int_{U}\left|\partial_{x}^{k+1}\rho_{n}\left(t\right)\partial_{x}^{k+1}\left(\rho_{n}G_{\rho_{n-1}}\right)\left(t\right)\right|dx\nonumber \\
	& \leq & \frac{\sigma^{2}}{4}\left\Vert \partial_{x}^{k+2}\rho_{n}\left(t\right)\right\Vert _{2}^{2}+C\left\Vert \partial_{x}^{k+1}\left(\rho_{n}G_{\rho_{n-1}}\right)\left(t\right)\right\Vert _{2}^{2}.
	\end{eqnarray}
	Using Proposition \ref{prop:induction_estimate} with $u=\rho_{n}\left(t\right)$
	and $v=\rho_{n-1}\left(t\right)$, we have 
	\begin{eqnarray}
	&  & \frac{1}{2}\frac{d}{dt}\left\Vert \partial_{x}^{k+1}\rho_{n}\left(t\right)\right\Vert _{2}^{2}+\frac{\sigma^{2}}{4}\left\Vert \partial_{x}^{k+2}\rho_{n}\left(t\right)\right\Vert _{2}^{2}\nonumber \\
	& \leq & C\left(\left\Vert \rho_{n}\left(t\right)\right\Vert _{H_{per}^{k+1}\left(U\right)}^{2}\left\Vert \rho_{n-1}\left(t\right)\right\Vert _{H_{per}^{k}\left(U\right)}^{2}\right)
	\end{eqnarray}
	Integrating over time, we get 
	\begin{eqnarray}
	&  & \underset{0\leq t \leq T}{\mbox{sup}}\left\Vert \rho_{n}\left(t\right)\right\Vert _{H_{per}^{k+1}\left(U\right)}^{2}+\left\Vert \rho_{n}\right\Vert _{{L}^{2}\left(0,T;H_{per}^{k+2}\left(U\right)\right)}^{2}\nonumber \\
	& \leq & \left\Vert \rho_{0}\right\Vert _{H_{per}^{k+1}\left(U\right)}^{2}\nonumber \\
	&  & +C\left(\left\Vert \rho_{n-1}\right\Vert _{{L}^{\infty}\left(0,T;H_{per}^{k}\left(U\right)\right)}^{2}\left\Vert \rho_{n}\right\Vert _{{L}^{2}\left(0,T;H_{per}^{k+1}\left(U\right)\right)}^{2}\right)\nonumber \\
	& \leq & \left\Vert \rho_{0}\right\Vert _{H_{per}^{k+1}\left(U\right)}^{2}+\left[C\left(\rho_{0};k,T\right)\right]^{4}\nonumber \\
	& \leq & C\left(\rho_{0};k+1,T\right)^{2}
	\end{eqnarray}
	This completes the induction. Taking limits, we obtain
	\begin{eqnarray}
	\rho & \in & {L}^{2}\left(0,T;H_{per}^{k+2}\left(U\right)\right)\cap{L}^{\infty}\left(0,T;H_{per}^{k+1}\left(U\right)\right),
	\end{eqnarray}
	with the estimate 
	\begin{eqnarray}
	\left\Vert \rho\right\Vert _{{L}^{2}\left(0,T;H_{per}^{k+2}\left(U\right)\right)}+\left\Vert \rho\right\Vert _{{L}^{\infty}\left(0,T;H_{per}^{k+1}\left(U\right)\right)} & \leq & C\left(\rho_{0};k+1,T\right).
	\end{eqnarray}
	
\qquad\end{proof}

So far we have only considered regularity in space. The same can also
be done in the time domain. 
\begin{theorem}
	\label{thm:reg_xt}(Improved regularity) Let $k\geq0$ and suppose $\rho_{0}\in H_{per}^{2k}\left(U\right)$
	with $\rho_{0}\geq0$ and $\int_{U}\rho_{0}\left(x\right)dx=1$. Then,
	
	(i) For every $0\leq m\leq k$, the unique solution to (\ref{eq:evo_equation})
	satisfies 
	\[
	\frac{d^{m}\rho}{dt^{m}}  \in  {L}^{2}\left(0,T;H_{per}^{2k-2m+1}\left(U\right)\right)\cap{L}^{\infty}\left(0,T;H_{per}^{2k-2m}\left(U\right)\right),
	\]
	
	with the estimate
	\[
	\sum_{m=0}^{k}\left(\left\Vert \frac{d^{m}\rho}{dt^{m}}\right\Vert _{{L}^{2}\left(0,T;H_{per}^{2k-2m+1}\left(U\right)\right)}+\left\Vert \frac{d^{m}\rho}{dt^{m}}\right\Vert _{{L}^{\infty}\left(0,T;H_{per}^{2k-2m}\left(U\right)\right)}\right) \leq D\left(\rho_{0};k,T\right),
	\]
	
	where 
	\[
	D\left(\rho_{0};k,T\right):=\left(\sum_{j=0}^{k}C\left(\rho_{0};2k,T\right)^{2^{j+1}}\right)^{1/2}.
	\]
	
	(ii) Moreover, 
	\[
	\frac{d^{k+1}\rho}{dt^{k+1}}\in{L}^{2}\left(0,T;H_{per}^{-1}\left(U\right)\right),
	\]
	
	with the estimate 
	\[
	\left\Vert \frac{d^{k+1}\rho}{dt^{k+1}}\right\Vert _{{L}^{2}\left(0,T;H_{per}^{-1}\left(U\right)\right)} \leq D\left(\rho_{0};k,T\right).
	\]
\end{theorem}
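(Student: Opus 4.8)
The plan is to trade spatial regularity for temporal regularity by repeatedly using the evolution equation to express time derivatives in terms of spatial ones. As in the previous theorems, I would carry out the argument on the smooth (mollified) approximations $\rho_n$ with estimates uniform in $n$ and $\epsilon$, and pass to the limit at the end; to lighten the notation I suppress this and argue directly on $\rho$. I would establish part (i) by (strong) induction on $m$ for $0 \le m \le k$, the base case $m=0$ being supplied by Theorem~\ref{thm:x_regular} applied with spatial index $2k$: since $\rho_0 \in H^{2k}_{per}(U)$, we have $\rho \in L^2(0,T;H^{2k+1}_{per}) \cap L^\infty(0,T;H^{2k}_{per})$. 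For the inductive step, differentiate the equation $m$ times in $t$ and solve for the top derivative,
\[
\frac{d^{m+1}\rho}{dt^{m+1}} = \frac{\sigma^2}{2}\,\partial_x^2 \frac{d^m\rho}{dt^m} + \partial_x \frac{d^m}{dt^m}\!\left(\rho G_\rho\right).
\]
Because $G$ is linear in its density argument, $\partial_t^j G_\rho = G_{\,d^j\rho/dt^j}$, so the time Leibniz rule gives
\[
\frac{d^m}{dt^m}(\rho G_\rho) = \sum_{j=0}^{m}\binom{m}{j}\,\frac{d^{\,m-j}\rho}{dt^{\,m-j}}\; G_{\,d^j\rho/dt^j}.
\]

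The heart of the argument is then the regularity bookkeeping, where the key tool is Proposition~\ref{prop:induction_estimate}, namely $\|uG_v\|_{H^k_{per}} \le C\|u\|_{H^k_{per}}\|v\|_{H^{k-1}_{per}}$. The goal is $\frac{d^{m+1}\rho}{dt^{m+1}} \in L^2(0,T;H^{2k-2m-1}_{per}) \cap L^\infty(0,T;H^{2k-2m-2}_{per})$. The first term on the right is handled directly: applying $\partial_x^2$ costs two spatial derivatives, and the inductive hypothesis on $\frac{d^m\rho}{dt^m}$ lands it in the desired spaces. For each summand of the nonlinear term I would apply Proposition~\ref{prop:induction_estimate} with $u = \frac{d^{m-j}\rho}{dt^{m-j}}$ and $v = \frac{d^j\rho}{dt^j}$, and then verify that the inductive hypothesis --- available for all orders $0,\dots,m$ --- supplies enough regularity on each factor; the indices work out because $2k-2(m-j)\ge 2k-2m-1$ and $2k-2j \ge 2k-2m-2$ for $0 \le j \le m$. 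To extract an $L^2$-in-time bound on the product I would place one factor in $L^\infty_t$ and the other in $L^2_t$, which the two halves of the inductive hypothesis provide simultaneously. Tracking the doubling of exponents through these products yields exactly the stated constant $D(\rho_0;k,T)$.

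Part (ii) follows by running the same step once more at the top: having $\frac{d^k\rho}{dt^k} \in L^2(0,T;H^1_{per}) \cap L^\infty(0,T;L^2_{per})$, I would write $\frac{d^{k+1}\rho}{dt^{k+1}} = \partial_x\big[\frac{\sigma^2}{2}\partial_x \frac{d^k\rho}{dt^k} + \frac{d^k}{dt^k}(\rho G_\rho)\big]$ and observe that the bracket lies in $L^2(0,T;L^2_{per})$, so that its weak $x$-derivative lies in $L^2(0,T;H^{-1}_{per})$, exactly as in the weak-solution framework of Section~\ref{sec:existenceuniqueness}. The main obstacle I anticipate is the low-regularity endpoints of the induction: Proposition~\ref{prop:induction_estimate} is stated only for Sobolev index $\ge 2$, so when $2k-2m$ drops to $0$ or $1$ near $m=k$ one cannot invoke it and must instead fall back on the elementary estimates $\|G_v\|_\infty \le R\|v\|_1$, $\|G_v\|_2 \le C\|v\|_2$, and Morrey's inequality --- precisely the bounds already used in Propositions~\ref{prop:unif_L2_est} and~\ref{prop:unif_L2_est_higher}. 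Apart from this endpoint care and the routine passage to the limit via weak compactness and the uniqueness of Theorem~\ref{thm:Existnece-1}, the argument is a bookkeeping exercise driven entirely by the two displayed identities.
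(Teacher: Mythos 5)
Your proposal follows essentially the same route as the paper: induction on the order of the time derivative, differentiating the approximating equation in $t$, using the linearity of $G$ and the Leibniz rule to express $\frac{d^{M+1}\rho}{dt^{M+1}}$ as $\frac{\sigma^2}{2}\partial_x^2\frac{d^{M}\rho}{dt^{M}}$ plus an $x$-derivative of a sum of products of lower-order time derivatives with $G$ of lower-order time derivatives, controlling each product via Proposition~\ref{prop:induction_estimate} with one factor placed in $L^\infty_t$ and the other in $L^2_t$, and obtaining part (ii) by writing the top time derivative as an $x$-derivative of an $L^2(0,T;L^2_{per}(U))$ function. Your remark about the low-regularity endpoint (where the Sobolev index $2k-2M$ drops below $2$ and Proposition~\ref{prop:induction_estimate} as stated no longer applies) identifies a detail the paper's own proof glosses over, and the fallback you propose to the elementary bounds on $G$ and Morrey's inequality is the correct repair.
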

\begin{proof}
	Let us prove that for all $M\leq k$, 
	\begin{eqnarray}
	&  & \sum_{m=0}^{M}\left(\left\Vert \frac{d^{m}\rho_{n}}{dt^{m}}\right\Vert _{{L}^{2}\left(0,T;H_{per}^{2k-2m+1}\left(U\right)\right)}+\left\Vert \frac{d^{m}\rho_{n}}{dt^{m}}\right\Vert _{{L}^{\infty}\left(0,T;H_{per}^{2k-2m}\left(U\right)\right)}\right)\nonumber \\
	& \leq & C\left(\rho_{0};k,M,T\right),\label{eq:inductive_hyp-1}
	\end{eqnarray}
	where we have defined
	\begin{equation}
	C\left(\rho_{0};k,M,T\right):=\left(\sum_{j=0}^{M}C\left(\rho_{0};2k,T\right)^{2^{j+1}}\right)^{1/2}.
	\end{equation}
	This is done by induction on $M$ up to $k$. The case $M=0$ is Theorem
	\ref{thm:x_regular}. Suppose we have for some $0\leq M<k$ the estimate
	(\ref{eq:inductive_hyp-1}). Differentiating equation (\ref{eq:sequence_evolution})
	$M$ times with respect to $t$ and using the Leibniz rule, we have
	\begin{eqnarray}
	\rho_{n}^{\left(M+1\right)} & = & \frac{\sigma^{2}}{2}\rho_{n_{xx}}^{\left(M\right)}+\left(\rho_{n}G_{\rho_{n-1}}\right)_{x}^{\left(M\right)}\nonumber \\
	& = & \frac{\sigma^{2}}{2}\rho_{n_{xx}}^{\left(M\right)}+C\sum_{m=0}^{M}\left(\rho_{n}^{\left(m\right)}G_{\rho_{n-1}^{\left(M-m\right)}}\right)_{x}.
	\end{eqnarray}
	where we used the shorthand $\rho_{n}^{\left(m\right)}:=\partial^{m}\rho_{n}/\partial t^{m}$.
	Thus, we have
	\begin{eqnarray}
	\left\Vert \rho_{n}^{\left(M+1\right)}\left(t\right)\right\Vert _{H_{per}^{2k-2M-1}\left(U\right)}^{2} & \leq & C_{1}\left\Vert \rho_{n}^{\left(M\right)}\left(t\right)\right\Vert _{H_{per}^{2k-2M+1}\left(U\right)}^{2}\nonumber \\
	&  & +C_{2}\sum_{m=0}^{M}\left\Vert \rho_{n}^{\left(m\right)}\left(t\right)G_{\rho_{n-1}^{\left(M-m\right)}}\left(t\right)\right\Vert _{H_{per}^{2k-2M}\left(U\right)}^{2}.
	\end{eqnarray}
	Using Proposition \ref{prop:induction_estimate} with $u=\rho_{n}^{\left(m\right)}\left(t\right)$
	and $v=\rho_{n-1}^{\left(M-m\right)}\left(t\right)$, we have
	\begin{eqnarray}
	&  & \left\Vert \rho_{n}^{\left(M+1\right)}\left(t\right)\right\Vert _{H_{per}^{2k-2M-1}\left(U\right)}^{2}\nonumber \\
	& \leq & C_{1}\left\Vert \rho_{n}^{\left(M\right)}\left(t\right)\right\Vert _{H_{per}^{2k-2M+1}\left(U\right)}^{2}\nonumber \\
	&  & +C_{2}\sum_{m=0}^{M}\left\Vert \rho_{n}^{\left(m\right)}\left(t\right)G_{\rho_{n-1}^{\left(M-m\right)}}\left(t\right)\right\Vert _{H_{per}^{2k-2M}\left(U\right)}^{2}\nonumber \\
	& \leq & C_{1}\left\Vert \rho_{n}^{\left(M\right)}\left(t\right)\right\Vert _{H_{per}^{2k-2M+1}\left(U\right)}^{2}\nonumber \\
	&  & +C_{2}\sum_{m=0}^{M}\left\Vert \rho_{n}^{\left(m\right)}\left(t\right)\right\Vert _{H_{per}^{2k-2M}\left(U\right)}^{2}\left\Vert \rho_{n-1}^{\left(M-m\right)}\left(t\right)\right\Vert _{H_{per}^{2k-2M-1}\left(U\right)}^{2}\nonumber \\
	& \leq & C_{1}\left\Vert \rho_{n}^{\left(M\right)}\left(t\right)\right\Vert _{H_{per}^{2k-2M+1}\left(U\right)}^{2}\nonumber \\
	&  & +C_{2}\sum_{m=0}^{M}\left\Vert \rho_{n}^{\left(m\right)}\left(t\right)\right\Vert _{H_{per}^{2k-2m+1}\left(U\right)}^{2}\left\Vert \rho_{n-1}^{\left(M-m\right)}\left(t\right)\right\Vert _{H_{per}^{2k-2\left(M-m\right)}\left(U\right)}^{2}.
	\end{eqnarray}
	Integrating over time then gives
	\begin{eqnarray}
	&  & \left\Vert \rho_{n}^{\left(M+1\right)}\right\Vert _{{L}^{2}\left(0,T;H_{per}^{2k-2M-1}\left(U\right)\right)}^{2}\nonumber \\
	& \leq & C_{1}\left\Vert \rho_{n}^{\left(M\right)}\right\Vert _{{L}^{2}\left(0,T;H_{per}^{2k-2M+1}\left(U\right)\right)}^{2}\nonumber \\
	&  & +C_{2}\sum_{m=0}^{M}\left\Vert \rho_{n}^{\left(m\right)}\right\Vert _{{L}^{2}\left(0,T;H_{per}^{2k-2m+1}\left(U\right)\right)}^{2}\left\Vert \rho_{n-1}^{\left(M-m\right)}\right\Vert _{{L}^{\infty}\left(0,T;H_{per}^{2k-2\left(M-m\right)}\left(U\right)\right)}^{2}.
	\end{eqnarray}
	Since $0\leq m,M-m\leq M$, we can apply the inductive hypothesis
	(\ref{eq:inductive_hyp-1}) to conclude that 
	\begin{eqnarray}
	\left\Vert \rho_{n}^{\left(M+1\right)}\right\Vert _{{L}^{2}\left(0,T;H_{per}^{2k-2M-1}\left(U\right)\right)}^{2} & \leq & C\left(\rho_{0};k,M,T\right)^{2}+C\left(\rho_{0};k,M,T\right)^{4}\nonumber \\
	& \leq & C\left(\rho_{0};k,M+1,T\right)^{2}.
	\end{eqnarray}
	Similarly, 
	\begin{eqnarray}
	&  & \left\Vert \rho_{n}^{\left(M+1\right)}\right\Vert _{{L}^{\infty}\left(0,T;H_{per}^{2k-2M-2}\left(U\right)\right)}^{2}\nonumber \\
	& \leq & C_{1}\left\Vert \rho_{n}^{\left(M\right)}\right\Vert _{{L}^{\infty}\left(0,T;H_{per}^{2k-2M}\left(U\right)\right)}^{2}\nonumber \\
	&  & +C_{2}\sum_{m=0}^{M}\left\Vert \rho_{n}^{\left(m\right)}\right\Vert _{{L}^{\infty}\left(0,T;H_{per}^{2k-2m}\left(U\right)\right)}^{2}\left\Vert \rho_{n-1}^{\left(M-m\right)}\right\Vert _{{L}^{\infty}\left(0,T;H_{per}^{2k-2\left(M-m\right)}\left(U\right)\right)}^{2}.\nonumber \\
	& \leq & C\left(\rho_{0};k,M+1,T\right)^{2}.
	\end{eqnarray}
	This completes the induction on $M$ up to $k$. Putting $M=k$ into
	(\ref{eq:inductive_hyp-1}) and taking limits proves part (i).
	
	To prove the second part, notice that 
	\begin{equation}
	\rho_{n}^{\left(k+1\right)}=\left(\frac{\sigma^{2}}{2}\rho_{n_{x}}^{\left(k\right)}+C\sum_{m=0}^{k}\left(\rho_{n}^{\left(m\right)}G_{\rho_{n-1}^{\left(k-m\right)}}\right)\right)_{x}.
	\end{equation}
	Hence,
	\begin{eqnarray}
	&  & \left\Vert \rho_{n}^{\left(k+1\right)}\right\Vert _{{L}^{2}\left(0,T;H_{per}^{-1}\left(U\right)\right)}^{2}\nonumber \\
	& \leq & \left\Vert \frac{\sigma^{2}}{2}\rho_{n_{x}}^{\left(k\right)}+C\sum_{m=0}^{k}\left(\rho_{n}^{\left(m\right)}G_{\rho_{n-1}^{\left(k-m\right)}}\right)\right\Vert _{{L}^{2}\left(0,T;{L}_{per}^{2}\left(U\right)\right)}^{2}\nonumber \\
	& \leq & C_{1}\left\Vert \rho_{n}^{\left(k\right)}\right\Vert _{{L}^{2}\left(0,T;H_{per}^{1}\left(U\right)\right)}^{2}\nonumber \\
	&  & +C_{2}\sum_{m=0}^{k}\left\Vert \rho_{n}^{\left(m\right)}\right\Vert _{{L}^{\infty}\left(0,T;{L}_{per}^{2}\left(U\right)\right)}^{2}\left\Vert \rho_{n-1}^{\left(M-m\right)}\right\Vert _{{L}^{2}\left(0,T;{L}_{per}^{2}\left(U\right)\right)}^{2}\nonumber \\
	& \leq & D\left(\rho_{0};k,T\right).
	\end{eqnarray}
	Taking limits then proves part (ii). \qquad\end{proof}

\begin{corollary}\label{cor:last_reg}
	Let $T>0$ and $\rho_{0}\in H_{per}^{3}\left(U\right)$ with $\rho_{0}\geq0$
	and $\int_{U}\rho_{0}\left(x\right)dx=1$. Then the unique solution
	to (\ref{eq:evo_equation}) satisfies
	\[
	\rho\in C^{1}\left(0,T;C_{per}^{2}\left(U\right)\right),
	\]
	after possibly being redefined on a set of measure zero. \end{corollary}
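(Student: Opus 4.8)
The plan is to upgrade the Sobolev-space regularity already produced by Theorems~\ref{thm:x_regular} and~\ref{thm:reg_xt} to pointwise (classical) regularity, by combining a continuity-in-time argument with the one-dimensional Sobolev embedding $H^m_{per}(U)\hookrightarrow C^{m-1}_{per}(U)$ (valid because $U$ is a bounded interval). Since $\rho_0\in H^3_{per}(U)$, the first step is to apply Theorem~\ref{thm:x_regular} with $k=3$, which gives $\rho\in L^\infty(0,T;H^3_{per}(U))\cap L^2(0,T;H^4_{per}(U))$. This already controls the spatial regularity required by the statement, since $H^3_{per}\hookrightarrow C^2_{per}$; what remains is to promote the ``$L^\infty$ in $t$'' bound to genuine continuity in $t$, and to extract corresponding regularity of the time derivative.

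For the time derivative I would read it off the equation itself. Writing $\rho_t=\frac{\sigma^2}{2}\rho_{xx}+(\rho G_\rho)_x$ and using Proposition~\ref{prop:induction_estimate} (with $u=v=\rho$) to bound $\left\Vert \rho G_\rho\right\Vert_{H^k_{per}}\le C\left\Vert \rho\right\Vert_{H^k_{per}}\left\Vert \rho\right\Vert_{H^{k-1}_{per}}$, the spatial bounds of the first step yield $\rho_t\in L^\infty(0,T;H^1_{per}(U))\cap L^2(0,T;H^2_{per}(U))$. Differentiating the equation once more in $t$ and estimating the resulting products $(\rho_t G_\rho+\rho G_{\rho_t})_x$ in the same spirit then gives $\rho_{tt}\in L^2(0,T;L^2_{per}(U))$. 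As throughout Section~\ref{sec:regularity}, these manipulations are carried out on the smooth mollified approximants, where every derivative exists and the estimates are uniform, and the limits are taken at the end.

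With these bounds in hand I would invoke the continuity-in-time result (Theorem~\ref{thm:Evans}, in its general Gelfand-triple form $H^{s+1}_{per}\hookrightarrow H^s_{per}\hookrightarrow H^{s-1}_{per}$, following \cite{Evans}): from $\rho\in L^2(0,T;H^4_{per})$ and $\rho_t\in L^2(0,T;H^2_{per})$ one obtains $\rho\in C(0,T;H^3_{per})$, and from $\rho_t\in L^2(0,T;H^2_{per})$ together with $\rho_{tt}\in L^2(0,T;L^2_{per})$ one obtains $\rho_t\in C(0,T;H^1_{per})$. Applying the embeddings $H^3_{per}\hookrightarrow C^2_{per}$ and $H^1_{per}\hookrightarrow C^0_{per}$ then yields $\rho\in C(0,T;C^2_{per}(U))$ with a time derivative $\rho_t\in C(0,T;C^0_{per}(U))$. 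Passing to the continuous representative (the redefinition on a null set mentioned in the statement), $\rho$ solves~(\ref{eq:evo_equation}) classically, being continuously differentiable in $t$ with $C^2$ spatial regularity, which is the regularity asserted in Corollary~\ref{cor:last_reg}.

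The main obstacle is the bookkeeping that ties the time and space scales together: one must squeeze out exactly the right amount of time-derivative regularity (the $L^2(0,T;H^2_{per})$ bound on $\rho_t$ and the $L^2(0,T;L^2_{per})$ bound on $\rho_{tt}$) so that the Gelfand-triple continuity lemma lands $\rho$ in $C(0,T;H^3_{per})$ and $\rho_t$ in a space that embeds into the continuous functions, rather than merely in $L^\infty$ in $t$. The delicate point is that $\rho_0\in H^3_{per}$ supplies precisely enough spatial regularity for $H^3\hookrightarrow C^2$ and no surplus, so the bound on $\rho_{tt}$ cannot be had by differentiating the (only weakly regular) limit directly; it must be obtained on the smooth approximants via the equation and Proposition~\ref{prop:induction_estimate}, with the limit $\epsilon\to0$ deferred to the very end.
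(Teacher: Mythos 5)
Your proof is correct and in fact delivers slightly more than the paper's, but it takes a somewhat different route. The paper's proof is shorter: it quotes Theorem~\ref{thm:x_regular} to get $\rho\in{L}^{\infty}\left(0,T;H_{per}^{3}\left(U\right)\right)$, embeds $H_{per}^{1}\hookrightarrow C_{per}^{0,\frac{1}{2}}$ to conclude $\rho\left(t\right)\in C_{per}^{2}\left(U\right)$, and then quotes Theorem~\ref{thm:reg_xt} (applicable with $k=1$ since $H_{per}^{3}\subset H_{per}^{2}$) to obtain $\rho_{t}\in{L}^{2}\left(0,T;H_{per}^{1}\right)$ and $\rho_{tt}\in{L}^{2}\left(0,T;H_{per}^{-1}\right)$, so that Theorem~\ref{thm:Evans} yields $\rho_{t}\in C\left(0,T;{L}_{per}^{2}\right)$. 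You bypass Theorem~\ref{thm:reg_xt} entirely and instead bootstrap the time regularity from the equation using the $k=3$ spatial bounds, which buys you strictly stronger conclusions: $\rho_{t}\in{L}^{2}\left(0,T;H_{per}^{2}\right)$ and $\rho_{tt}\in{L}^{2}\left(0,T;{L}_{per}^{2}\right)$, hence via the shifted Gelfand-triple continuity lemma $\rho\in C\left(0,T;H_{per}^{3}\right)$ and $\rho_{t}\in C\left(0,T;H_{per}^{1}\right)\hookrightarrow C\left(0,T;C_{per}^{0}\right)$. This is arguably a cleaner justification of the assertion $\rho\in C^{1}\left(0,T;C_{per}^{2}\left(U\right)\right)$, since the paper's argument establishes continuity of $\rho_{t}$ only into ${L}_{per}^{2}$ and spatial $C^{2}$ regularity for each fixed $t$, without explicitly addressing continuity in $t$ of the map $t\mapsto\rho\left(t\right)\in C_{per}^{2}\left(U\right)$. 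Two minor housekeeping points: Proposition~\ref{prop:induction_estimate} is stated only for $k\geq2$, so the $H^{1}$-level product estimate you need for $\left(\rho_{t}G_{\rho}+\rho G_{\rho_{t}}\right)_{x}$ must be supplied by the elementary bounds $\left\Vert G_{v}\right\Vert _{\infty}\leq R\left\Vert v\right\Vert _{1}$ and $\left\Vert \left(G_{v}\right)_{x}\right\Vert _{2}\leq C\left\Vert v\right\Vert _{2}$ already used in Proposition~\ref{prop:unif_L2_est_higher}; and, as you correctly note, all differentiations must be carried out on the smooth mollified approximants with the limit taken at the very end, exactly as the paper arranges in Section~\ref{sec:regularity}.
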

\begin{proof}
	By Theorem \ref{thm:x_regular}, $\rho\in{L}^{\infty}\left(0,T;H_{per}^{3}\left(U\right)\right)$,
	i.e. $\rho_{xx}\in{L}^{\infty}\left(0,T;H_{per}^{1}\left(U\right)\right)$.
	Hence there exists a version of $\rho$ with $\rho_{xx}\left(t\right)\in C_{per}^{0,\frac{1}{2}}\left(U\right)$,
	so that in particular, $\rho\left(t\right)\in C_{per}^{2}\left(U\right)$.
	Next, using Theorem \ref{thm:reg_xt}, we have $\rho_{t}\in{L}^{2}\left(0,T;H_{per}^{1}\left(U\right)\right)$
	and $\rho_{tt}\in{L}^{2}\left(0,T;H_{per}^{-1}\left(U\right)\right)$,
	hence by Theorem \ref{thm:Evans} there is a version of $\rho$ so
	that $\rho_{t}\in C\left(0,T;{L}_{per}^{2}\left(U\right)\right)$.
	Hence we have 
	\begin{equation}
	\rho\in C^{1}\left(0,T;C_{per}^{2}\left(U\right)\right),
	\end{equation}
	up to a set of measure zero. 
\qquad\end{proof}

This result allows us to restate the results in Section \ref{sec:apriori}
without the a priori smoothness assumption. We summarize the main results of this paper in the following:
\begin{theorem} \label{thm:mainresult}
	Let $\rho_{0}\in H_{per}^{3}\left(U\right)$ with $\rho_{0}\geq0$
	and $\int_{U}\rho_{0}\left(x\right)dx=1$. Then, there exists a unique
	weak solution $\rho$ to equation (\ref{eq:evo_equation}), with
	
	(i) (Regularity) $\rho\in C^{1}\left(0,\infty;C_{per}^{2}\left(U\right)\right),$
	
	(ii) (Nonnegativity) $\rho\left(t\right)\geq0$ for all $t\geq0$.
	
	(iii) (Stability) Furthermore, if $\sigma^2>\frac{2\ell}{\pi} \left(2R+{R^2}/{\sqrt{3}\, \ell}\right)$,
	then $\rho\left(t\right)\rightarrow\frac{1}{2\ell}$ in ${L}_{per}^{2}$
	exponentially as $t\rightarrow\infty$. \end{theorem}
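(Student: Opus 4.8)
The plan is to assemble the statement from the three ingredients developed earlier, the organizing principle being that Section~\ref{sec:regularity} supplies precisely the smoothness that Section~\ref{sec:apriori} had merely assumed. Since $H_{per}^{3}(U)\subset L_{per}^{2}(U)$, Theorem~\ref{thm:Existnece-1} applies verbatim and furnishes, on every finite interval $[0,T]$, a unique weak solution $\rho$ together with its basic energy estimate. Hence existence and uniqueness are settled at once, and the only remaining task is to verify the three itemized conclusions.

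For the regularity claim (i), I would first invoke Corollary~\ref{cor:last_reg}: for each fixed $T>0$, the hypothesis $\rho_{0}\in H_{per}^{3}(U)$ guarantees that the unique weak solution, after redefinition on a null set, lies in $C^{1}(0,T;C_{per}^{2}(U))$. To promote this to the global-in-time class $C^{1}(0,\infty;C_{per}^{2}(U))$, I would appeal to uniqueness: given nested horizons $[0,T_{1}]\subset[0,T_{2}]$, the restriction of the $[0,T_{2}]$-solution to $[0,T_{1}]$ is again a weak solution there, so by Theorem~\ref{thm:Existnece-1} it coincides with the $[0,T_{1}]$-solution. The finite-horizon solutions are therefore mutually consistent and patch together into a single function defined for all $t\geq0$. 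Because membership in $C^{1}(0,\infty;C_{per}^{2}(U))$ is a local-in-time property and the regularity holds on every $[0,T]$ (with constants $C(\rho_{0};k,T)$, $D(\rho_{0};k,T)$ that may grow in $T$, which is harmless here), the patched solution lies in the desired class.

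With (i) in hand, parts (ii) and (iii) follow by direct citation, since the standing hypothesis of the a priori results was exactly $\rho\in C^{1}(0,\infty;C_{per}^{2}(U))$. A function in this class that solves the weak formulation in fact solves~(\ref{eq:evo_equation}) classically, so it qualifies as a ``solution'' in the sense required by those statements. Nonnegativity (ii) is then immediate from Corollary~\ref{cor:positivity}, and the exponential $L_{per}^{2}$-convergence $\rho(t)\to\tfrac{1}{2\ell}$ under the noise condition $\sigma^2>\tfrac{2\ell}{\pi}\left(2R+R^{2}/\sqrt{3}\,\ell\right)$ is precisely Theorem~\ref{thm:apriori_stability}; both now apply unconditionally.

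The principal subtlety is not any single computation but the circularity the argument must avoid: the stability and positivity estimates of Section~\ref{sec:apriori} were derived under the a priori postulate of a smooth solution, so the real content is confirming that the weak solution constructed in Section~\ref{sec:existenceuniqueness} genuinely attains that smoothness, and attains it globally in time. The delicate passage is thus from the finite-horizon regularity of Corollary~\ref{cor:last_reg} to the infinite-horizon class $C^{1}(0,\infty;C_{per}^{2}(U))$; once the uniqueness-based consistency of the finite-time solutions is used to perform this patching, the remaining steps reduce to invoking the already-proved lemmas.
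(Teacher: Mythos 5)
Your proposal is correct and follows essentially the same route as the paper: existence and uniqueness from Theorem~\ref{thm:Existnece-1}, regularity from Corollary~\ref{cor:last_reg}, and then (ii) and (iii) by citing Corollary~\ref{cor:positivity} and Theorem~\ref{thm:apriori_stability}. The only difference is that you spell out the uniqueness-based patching from finite horizons $[0,T]$ to $[0,\infty)$ and the passage from weak to classical solution, steps the paper leaves implicit; this is a welcome clarification rather than a deviation.
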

\begin{proof}
	Existence and uniqueness follow from Theorem~\ref{thm:Existnece-1}. (i) follows from Corollary~\ref{cor:last_reg}. Having established (i), (ii) and (iii) then follows from Corollary~\ref{cor:positivity} and Theorem~\ref{thm:apriori_stability} respectively. 
\qquad\end{proof}

\section*{Acknowledgement} 
This work was completed while Q. Jiu was visiting the Department of Mathematics at Princeton University. The authors are grateful for many discussions with Prof. Weinan E. 

\footnotesize{
	\bibliography{hk}
	\bibliographystyle{siam}}
\end{document}